\newcommand{\CC}{{\mathbb{C}}}
\newcommand{\FF}{{\mathbb{F}}}
\newcommand{\KK}{{\mathbb{K}}}
\newcommand{\QQ}{{\mathbb{Q}}}
\newcommand{\PP}{{\mathbb{P}}}
\newcommand{\AAA}{{\sf A}}
\newcommand{\SSS}{{\sf S}}
\newcommand{\Inn}{{\operatorname{Inn}}}
\newcommand{\Aut}{{\operatorname{Aut}}}
\newcommand{\Irr}{{\operatorname{Irr}}}
\newcommand{\St}{{\operatorname{St}}}
\newcommand{\Syl}{{\operatorname{Syl}}}
\newcommand{\ind}{{\operatorname{ind}}}
\newcommand{\Ker}{{\operatorname{Ker}}}
\newcommand{\diag}{{\operatorname{diag}\,}}
\newcommand{\GL}{{\operatorname{GL}}}
\newcommand{\SL}{{\operatorname{SL}}}
\newcommand{\PSL}{{\operatorname{L}}}
\newcommand{\PSU}{{\operatorname{U}}}
\newcommand{\SU}{{\operatorname{SU}}}
\newcommand{\Sp}{{\operatorname{Sp}}}
\newcommand{\SO}{{\operatorname{SO}}}
\newcommand{\PSp}{{\operatorname{S}}}
\newcommand{\OO}{{\operatorname{O}}}
\newcommand{\Sz} {{\operatorname{Sz}}}
\newcommand{\tw}[1]{{}^#1\!}
\renewcommand{\mod}{\bmod \,}
\let\eps=\epsilon
\newtheorem{thm}{Theorem}[section]
\newtheorem{theorem}[thm]{Theorem}
\newtheorem{lemma}[thm]{Lemma}
\newtheorem{cor}[thm]{Corollary}
\newtheorem{prop}[thm]{Proposition}
\newtheorem{conj}[thm]{Conjecture} 
\newtheorem{example}[thm]{Example}
\theoremstyle{definition}
\theoremstyle{remark}
\begin{document}

\title[Lifting in Frattini Covers and A Characterization of Solvable Groups]
{Lifting in Frattini Covers and A Characterization of Finite Solvable Groups}


\author{Robert M. Guralnick}
\address{Department of Mathematics, University of Southern California, Los Angeles, CA 90089-2532, USA}
\email{guralnic@usc.edu} 
\author{Pham Huu Tiep}
\address{Department of Mathematics\\
    University of Arizona\\
    Tucson, AZ 85721\\ 
    U. S. A.} 
\email{tiep@math.arizona.edu}

\thanks{The authors gratefully acknowledge the support of the NSF 
(grants DMS-1001962 and DMS-0901241). They are also grateful to 
M. Fried for helpful comments on the topic of the paper.}

\keywords{Characterizations of finite solvable groups, lifting in Frattini covers}

\subjclass[2010]{Primary 20D10, 20D06, 20D05 ; Secondary 14G32, 14H30}
 
\begin{abstract}  
We prove a lifting theorem for odd Frattini covers
of finite groups.  Using this, 
we characterize finite solvable groups as those finite groups which 
do not contain nontrivial elements $x_i$, $i=1,2,3$, with $x_1x_2x_3=1$  and
$x_i$ a $p_i$-element for distinct primes $p_i$.
 \end{abstract}

\maketitle


\section{Introduction} \label{sec:intro}

Let $G$ be a finite group.  Suppose that $X$ is a finite group such that 
$X/F = G$. We say that $X$ is a {\it Frattini cover of $G$} if $F$ is contained
in the Frattini subgroup $\Phi(X)$ of $X$.

Frattini covers have been studied considerably with respect
to coverings of curves (and infinite towers of coverings of curves).

Fried \cite{intro, fried}  introduced the  modular tower problem and has
made several interesting conjectures regarding them which generalize
the fact that for an elliptic curve defined over a number field,  the torsion
subgroup defined over the number field is bounded.    See \cite{fried, BF, FK, D}
for much more about this problem and its motivation and interesting
examples.   Theorems \ref{main1} and \ref{main1gen} have some interesting consequences
for Hurwitz spaces. In particular, it  implies the existence of certain
Hurwitz spaces for  certain Frattini covers related to the modular tower program. 

Our first result gives a lifting critetion in Frattini covers. 

\begin{theorem}  \label{main1}  
Let $p$ be an odd prime. Let $X$ be a Frattini cover of $G = X/F$
with $F$ a $p$-group. Assume that $p$ does not divide the order of
the Schur multiplier of $G$.   Let $g_1, \ldots, g_r \in G$  satisfy

{\rm (i)} $G = \langle g_1, \ldots, g_r \rangle$,

{\rm (ii)}  $g_1 \cdots g_r =1$, and

{\rm (iii)} the order of each $g_i$ is coprime to $p$.\\ 
Then, for any $f \in F$, there exist $x_i \in X$, with $x_iF = g_i$, $|x_i| = |g_i|$,
such that $x_1\cdots x_r = f$.
\end{theorem}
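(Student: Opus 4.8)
The plan is to induct on $|F|$, the base case $|F|=1$ being vacuous. Write $\pi\colon X\to G$ for the quotient map. Two preliminary facts will be used repeatedly. First, since $F$ is a $p$-group and each $g_i$ is a $p'$-element, each $g_i$ lifts to an element $x_i\in X$ with $x_iF=g_i$ and $|x_i|=|g_i|$; moreover, by the Schur--Zassenhaus theorem applied inside the preimage $\pi^{-1}(\langle g_i\rangle)$ (whose normal Sylow $p$-subgroup is $F$), any two such lifts of the same order are conjugate by an element of $F$. Second, hypotheses (i) and (iii) force $p\nmid|G/[G,G]|$: the images of the $g_i$ generate $G/[G,G]$ and are $p'$-elements, so the abelianization is generated by $p'$-elements and hence has order prime to $p$.

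For the inductive step choose a minimal normal subgroup $N$ of $X$ with $N\le F$. Then $N$ is elementary abelian, and a standard commutator argument shows it is central: $[N,F]\trianglelefteq X$ lies in $N$, and since a $p$-group $F$ acting on the nontrivial $p$-group $N$ has $C_N(F)\ne1$ with $C_N(F)\trianglelefteq X$, minimality gives $[N,F]=1$, so $N\le Z(F)$ and $N$ is a simple $\FF_p[G]$-module. I claim $N$ is a \emph{nontrivial} module. If not, $N$ is a trivial $G$-module and thus $N\le Z(X)$, while $N\le\Phi(X)$. The five-term homology sequence of the central extension $1\to N\to X\to G\to1$ exhibits $N\cap[X,X]$ as a quotient of the Schur multiplier $M(G)$; as $p\nmid|M(G)|$ and $N\cap[X,X]$ is a $p$-group, $N\cap[X,X]=1$. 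Hence $N$ embeds in $X/[X,X]$ as a $p$-subgroup mapping isomorphically onto the kernel of $X/[X,X]\to G/[G,G]$; since $p\nmid|G/[G,G]|$ this image is the full Sylow $p$-subgroup of the abelian group $X/[X,X]$, hence a direct factor. A nontrivial direct factor cannot lie in the Frattini subgroup, contradicting $N\le\Phi(X)$. Therefore $N$ is nontrivial simple.

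Now apply the inductive hypothesis to the Frattini cover $X/N$ of $G$ (note $F/N\le\Phi(X/N)$) and the prescribed element $\bar f=fN$: there are lifts $\bar x_i\in X/N$ with $\bar x_i\,(F/N)=g_i$, $|\bar x_i|=|g_i|$ and $\bar x_1\cdots\bar x_r=\bar f$. Lift each $\bar x_i$ to $x_i\in X$ of the same (prime-to-$p$) order; then $x_iF=g_i$, $|x_i|=|g_i|$, and $x_1\cdots x_r=fn_0$ for some $n_0\in N$. The remaining freedom is to replace each $x_i$ by an $N$-conjugate $x_i^{n_i}$, which preserves the order and the image mod $N$. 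Writing $N$ additively as an $\FF_p[G]$-module and setting $h_i=g_1\cdots g_i$ (so $h_0=h_r=1$), a direct computation collecting the commutators to the right shows
\[
x_1^{n_1}\cdots x_r^{n_r}=(x_1\cdots x_r)\cdot\sum_{i=1}^r(h_i-h_{i-1})n_i .
\]
Thus the set of achievable products is the coset $fn_0+W$, where $W=\sum_{i=1}^r(h_i-h_{i-1})N\subseteq N$, and it suffices to prove $W=N$.

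The crux is this last surjectivity. I would argue by duality: let $\psi\in N^{*}$ annihilate $W$, so $\psi\big((h_i-h_{i-1})n\big)=0$ for all $n\in N$ and all $i$. In the contragredient module $N^{*}$ this says $h_i^{*}\psi=h_{i-1}^{*}\psi$, and telescoping from $h_0=1$ gives $h_i^{*}\psi=\psi$ for every $i$. Since $g_i=h_{i-1}^{-1}h_i$, it follows that $g_i^{*}\psi=\psi$ for all $i$, and as the $g_i$ generate $G$, $\psi$ is a $G$-fixed vector of $N^{*}$. But $N^{*}$ is nontrivial and simple, so it has no nonzero fixed vector; hence $\psi=0$, $W^{\perp}=0$, and $W=N$. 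Choosing the $n_i$ so that $\sum_i(h_i-h_{i-1})n_i=-n_0$ then yields $x_1\cdots x_r=f$, completing the induction. I expect the identification of the achievable set with $W$ together with the vanishing $W=N$ to be the main point; the exclusion of a trivial chief factor, where the Schur multiplier and abelianization hypotheses enter, is the other step requiring care.
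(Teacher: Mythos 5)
Your reduction machinery is sound: the Schur--Zassenhaus lifting of each $g_i$ inside $\pi^{-1}(\langle g_i\rangle)$, the passage to $X/N$, and the duality computation showing that $N$-conjugation realizes all of $N$ when $N$ is a \emph{nontrivial} simple $\FF_p[G]$-module (this last part is essentially the paper's Lemma \ref{abelian} in additive notation). The genuine gap is the step where you exclude the trivial module. You apply the five-term homology sequence to ``the central extension $1\to N\to X\to G\to 1$,'' but $X/N$ is not $G$ unless $N=F$; for a proper minimal normal subgroup $N<F$ the sequence only shows that $N\cap[X,X]$ is a quotient of the Schur multiplier of the \emph{intermediate} group $X/N$, not of ${\mathrm{Mult}}(G)$. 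Nothing in the hypotheses controls the $p$-part of ${\mathrm{Mult}}(X/N)$: Schur multipliers of Frattini covers of $G$ with $p$-group kernel can acquire $p$-torsion even when $p\nmid|{\mathrm{Mult}}(G)|$, so central minimal normal subgroups inside $F$ cannot be ruled out this way. (Your parenthetical claim that $N$ maps isomorphically onto the kernel of $X/[X,X]\to G/[G,G]$ is also unjustified, since that kernel is $F[X,X]/[X,X]$, but this is secondary.) Note that this case cannot simply be absorbed: if the chosen $N$ is central, conjugation by elements of $N$ moves nothing, so your inductive step has no way to correct the product.

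There is a decisive check that something must be wrong: your argument never uses that $p$ is odd, yet Theorem \ref{main1} is false for $p=2$, as the paper shows in Proposition \ref{p=2}. That example also exhibits the failure of your key claim concretely: there $X=2^{1+2n}_{-}\cdot\OO^{-}_{2n}(2)$ is a Frattini cover of $G=\OO^{-}_{2n}(2)$ (whose Schur multiplier is trivial), $F=2^{1+2n}_{-}$, and the \emph{unique} minimal normal subgroup of $X$ contained in $F$ is $Z(F)\cong C_2$, which is central in $X$, i.e.\ a trivial module; no contradiction arises because ${\mathrm{Mult}}(X/Z(F))$ has even order even though ${\mathrm{Mult}}(G)$ does not. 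This is exactly where oddness of $p$ must enter, and it is the ingredient your proof is missing. The paper's route is: first dispose of the case $F\le Z(X)$ (there the quotient really is $G$, so the five-term/stem-extension argument is valid and contradicts $p\nmid|{\mathrm{Mult}}(G)|$); otherwise choose $N$ minimal among \emph{noncentral} normal subgroups of $X$ inside $F$ --- such an $N$ need not be abelian a priori, and proving that it is abelian, so that the conjugation argument applies, is the content of Proposition \ref{split}(ii), an Isaacs-type splitting theorem valid only for odd $p$ (it fails for $p=2$ precisely because of extraspecial $2$-groups). Repairing your proof requires importing that result and restructuring the choice of $N$ accordingly.
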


In particular, we can take $f=1$.   This shows that the lifting invariant
defined by Fried vanishes in this setting.  Furthermore, 
by a standard compactness argument, we can
take $X$ to be the $p$-universal Frattini cover (see \cite{fried}). 
More generally, if we drop the assumption that $p$ does not divide
the order of the Schur multiplier, we see that the only obstruction
to lifting is lifting to the maximal central Frattini cover
(and this is a true obstruction -- see the examples in Section \ref{lifting}
and Section \ref{sec:psolvable}). 
See Theorem \ref{main1gen} for a more general result which does not
assume the condition on the Schur multiplier. 

Of course, we can replace $p$ by any set $\pi$ of odd primes   and let  $F$ be a (necessarily nilpotent) 
$\pi$-group (see Theorem \ref{main1gen}).
  We also construct examples to show that
Theorem \ref{main1} fails if $p=2$, see Proposition \ref{p=2}.

Using this result and  the Thompson classification of the finite simple
groups in which every proper subgroup is solvable \cite{jgt}, we obtain
the following characterization of finite solvable groups (Barry \cite{barry}
asked whether this was true).

\begin{theorem} \label{main2}
Let $G$ be a finite group.  Then $G$ is solvable if and only if
$x_1x_2x_3 \neq 1$  for all nontrivial $p_i$-elements $x_i$ of $G$ for distinct
primes $p_i$, $i = 1,2,3$.
\end{theorem}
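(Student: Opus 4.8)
The forward direction is a short induction on $|G|$. Assuming $G$ is solvable, suppose toward a contradiction that nontrivial $p_i$-elements $x_1, x_2, x_3$ with distinct primes $p_i$ satisfy $x_1 x_2 x_3 = 1$; replacing $G$ by $\langle x_1, x_2, x_3\rangle$ I may assume $G = \langle x_1, x_2\rangle$. Let $N$ be a minimal normal subgroup of $G$, necessarily an elementary abelian $q$-group. Since the $x_i$ have pairwise coprime orders, at most one of them is a $q$-element, so at most one image $x_iN$ is trivial. If all three images are nontrivial they form a forbidden triple of the same kind in the smaller solvable group $G/N$, and induction applies. If exactly one, say $x_3$, lies in $N$, then $x_2 = x_1^{-1}x_3^{-1} \in \langle x_1\rangle N$, whence $G = \langle x_1\rangle N$ has order a $\{p_1, q\}$-number; but $x_2$ is a nontrivial $p_2$-element with $p_2 \notin \{p_1, q\}$, a contradiction. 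This settles one direction using nothing beyond solvability.

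For the converse I argue the contrapositive: if $G$ is nonsolvable I must produce such a triple, and it suffices to find one inside a subgroup $H \le G$ that is minimal with respect to being nonsolvable. For such $H$ every proper subgroup is solvable. Its solvable radical $R$ lies in every maximal subgroup --- otherwise some maximal $M$ satisfies $H = RM$, exhibiting the nonsolvable group $H/R \cong M/(M\cap R)$ as a quotient of the solvable group $M$ --- so $R \le \Phi(H)$, while $\Phi(H) \le R$ since $\Phi(H)$ is nilpotent; thus $R = \Phi(H)$. Hence $H$ is a Frattini cover of $S := H/\Phi(H)$ with nilpotent kernel $F = \Phi(H)$. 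The quotient $S$ inherits the property that every proper subgroup is solvable and has trivial solvable radical, so it is a nonabelian simple group all of whose proper subgroups are solvable; by Thompson's classification \cite{jgt} it is one of $\PSL_2(2^p)$, $\PSL_2(3^p)$, $\PSL_2(p)$ with $p \equiv \pm 2 \pmod 5$, $\Sz(2^p)$, or $\PSL_3(3)$.

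The plan is now to exhibit, in each such $S$, a generating triple $g_1 g_2 g_3 = 1$ with the $g_i$ nontrivial of pairwise distinct prime orders, and then to lift it through $H \to S$ using Theorems \ref{main1} and \ref{main1gen}. Each of these groups has order divisible by at least three primes, and the rank-one structure supplies the triples explicitly: for $\PSL_2(q)$ and $\Sz(q)$ one takes one element of $2$-power order together with two elements of coprime odd orders drawn from the distinct cyclic tori (orders dividing $q-1$ and $q+1$, respectively $q-1$ and the divisors of $q^2+1$), the existence of a product-one generating triple in prescribed classes being a standard positive structure-constant computation; the single group $\PSL_3(3)$ is checked directly. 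Writing $F = F_{2} \times F_{2'}$ with $F_{2'}$ the odd part, I first lift through $F_{2'}$: it is a nilpotent group of odd order, the chosen orders can be taken coprime to the primes dividing $F_{2'}$, and the Schur multiplier of every minimal simple group is a $2$-group, so no odd prime divides it. Hence Theorem \ref{main1gen} applies layer by layer, the triple lifts with preserved orders and product, and by the Frattini property the lifts continue to generate, maintaining hypothesis (i).

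This reduces matters to the case where $F$ is a $2$-group, which is exactly where Theorem \ref{main1} fails (Proposition \ref{p=2}) and which I expect to be the main obstacle. The resolution is that the triple contains a $2$-element $g_1$ whose freedom can absorb the correction living in the $2$-group $F$. Fixing the unique odd-order lifts $x_2, x_3$ of $g_2, g_3$, any $2$-element lift $x_1$ of $g_1$ gives $x_1 x_2 x_3 \in F$, and one must show that ranging over the $2$-element lifts of $g_1$ sweeps out enough of $F$ to hit the identity. This is transparent in the central case: for $H = \SL_2(7)$ covering $S = \PSL_2(7)$ the $(2,3,7)$-Hurwitz triple lifts to elements of orders $(4,3,7)$, and since replacing the order-$4$ element by its product with the central involution $-I$ toggles $x_1 x_2 x_3$ between $1$ and $-I$ while keeping all three orders prime powers, one choice gives product $1$. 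The odd-order lifts are rigid, since multiplying them by a central involution destroys their prime-power order, so all the slack must come from the $2$-element. Because the $2$-parts of the Schur multipliers of the minimal simple groups are small (trivial, $\ZZ_2$, or $\ZZ_2 \times \ZZ_2$ for $\Sz(8)$), this is a finite if delicate check; carrying it out uniformly, including for the non-central $2$-Frattini layers, is the crux of the argument.
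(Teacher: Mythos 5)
Your skeleton (pass to a minimal nonsolvable subgroup $H$, identify $S=H/\Phi(H)$ via Thompson's list, produce a triple in $S$, lift through the nilpotent Frattini kernel) matches the paper's strategy in outline, but both of your actual lifting steps have genuine gaps, and the paper's proof of Theorem \ref{main3} is organized precisely to avoid them.

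First, your claim that ``the chosen orders can be taken coprime to the primes dividing $F_{2'}$'' --- which you need for hypothesis (iii) of Theorem \ref{main1gen} --- is false in general. Every prime dividing $|\Phi(H)|$ divides $|S|$ (Lemma \ref{minimal}), but $S$ may have very few odd prime divisors: $\PSL_2(8)$ has only $3$ and $7$, and $\SL_3(3)$ only $3$ and $13$. A suitable finite quotient of the universal Frattini cover of $\PSL_2(8)$ is a finite Frattini cover $H$ whose kernel has order divisible by $21$, and any such $H$ is itself minimal nonsolvable (a proper subgroup $M$ satisfies $M\Phi(H)<H$, so it maps into a proper, hence solvable, subgroup of $S$ with nilpotent kernel part). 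In this $H$ every admissible triple downstairs must use exactly the primes $2,3,7$, so the coprimality hypothesis of Theorem \ref{main1gen} can never be met. This is exactly why the paper does \emph{not} prove Theorem \ref{main2} by invoking Theorems \ref{main1} or \ref{main1gen}: it runs a minimal-counterexample induction, peeling off one minimal noncentral normal subgroup $N\le\Phi(G)$ at a time and applying Proposition \ref{split} together with Lemma \ref{abelian} directly. The hypothesis of Lemma \ref{abelian} used there is $G=O^p(G)$ (automatic, since a minimal counterexample is perfect), and in that form no coprimality between the element orders and $|N|$ is required.

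Second, your ``crux'' --- that ranging over order-preserving $2$-element lifts of $g_1$ sweeps out enough of $F_2$ --- is not proved, and your reason for hoping it is a finite check conflates the $2$-part of the Schur multiplier (at most $\ZZ_2\times\ZZ_2$ here) with the $2$-part of the Frattini kernel, which is unbounded: the universal $2$-Frattini cover of each minimal simple group has arbitrarily large finite quotients, so there is no finite list to check. Moreover the step is unnecessary in your order-preserving formulation, since the theorem only asks that the lift of $g_1$ be a nontrivial $2$-element, not one of the same order. That observation is the paper's one-line resolution (step 1 of the proof of Theorem \ref{main3}): take any $2$-element lift $x_1$ and odd-order lifts $x_2,x_3$; then $x_1x_2x_3=f$ lies in the $2$-group $F_2\le O_2(H)$, and $(f^{-1}x_1)x_2x_3=1$, where $f^{-1}x_1$ lies in the $2$-group $O_2(H)\langle x_1\rangle$ and is nontrivial because $x_2x_3\ne 1$. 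With this absorption trick the $2$-part costs nothing, whereas with order preservation the problem is genuinely delicate --- Proposition \ref{p=2} shows order-preserving lifting through noncentral $2$-kernels can fail. (Your forward direction is correct, if more elaborate than needed; note also that the paper's base case, quasi-simple $G$, is settled by character-theoretic computations in the Schur covers in Lemma \ref{qs}, which also yields the sharper constraint $p_2\in\{3,5\}$ of Theorem \ref{main3}, something your toggle argument for central kernels does not address.)
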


The forward implication is  trivial. The reverse implication crucially depends on 
results of Thompson \cite{jgt}, and Proposition \ref{split} which  
essentially follows from a result of Isaacs \cite{I1}.  
Thompson proved this result  if one considers all triples of nontrivial elements
of corpime order using his result on simple groups with all proper subgroups solvable. 
Kaplan and Levy \cite{kap} (see also  \cite{GL}) proved a variant of the previous result  -- in their result,
$x_1$ is a $2$-element, $x_2$ is a $p$-element for some odd prime $p$ and
$x_3$ is any  nontrivial element whose order is coprime to $2p$.   We actually
prove a somewhat stronger result by showing that in 
Theorem \ref{main2} it suffices
to assume that $p_1=2$ and $p_2 \in \{3,5\}$  (see Theorem \ref{main3}). 
An even stronger result, Theorem \ref{main4} characterizing $p$-solvable groups, is also obtained (but using
the full classification of finite simple groups).

There have been many characterizations of finite solvable groups.  We mention a few:

\begin{enumerate}  
\item Every $2$-generated subgroup is solvable \cite{jgt} (see also \cite{flav});
\item  Every pair of conjugate elements generate a solvable group \cite{guest};
\item  The proportion of pairs of elements which generate a solvable group
is greater than $11/30$ \cite{gurwil}; 
\item  If $x, y \in G$, then $x, y^g$ generate a solvable group for some $g \in G$
\cite{dghp}.
\end{enumerate}  

See \cite{dghp, GL, G2} for more characterizations and other references.

We obtain another characterization of solvable groups  
by combining our Theorem \ref{main3} with the proof of \cite[Theorem 2]{barry}:

\begin{cor}  Let $G$ be a finite group.  The following are equivalent:

{\rm (i)}   $G$ is solvable;

{\rm (ii)}  For all distinct primes $p_i$ and for all Sylow $p_i$-subgroups $P_i$
of $G$ with $1 \le i \le 3$, $|P_1P_2P_3|=|P_1|  |P_2| |P_3|$.

{\rm (iii)} For all distinct primes $p_i$ and for all Sylow $p_i$-subgroups $P_i$
of $G$, where $1 \le i \le 3$, $p_1=2$, and $p_2 \in \{3,5\}$, 
$|P_1P_2P_3|=|P_1| |P_2| |P_3|$.
\end{cor}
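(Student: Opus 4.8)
The plan is to prove the cycle of implications (i) $\Rightarrow$ (ii) $\Rightarrow$ (iii) $\Rightarrow$ (i). The implication (ii) $\Rightarrow$ (iii) is immediate, since (iii) merely asks for the factorization identity on the subfamily of prime triples with $p_1 = 2$ and $p_2 \in \{3,5\}$. The two substantive implications both rest on a single dictionary between the numerical condition $|P_1P_2P_3| = |P_1||P_2||P_3|$ and the element-product conditions of Theorems~\ref{main2} and \ref{main3}. Namely, for fixed Sylow subgroups $P_1,P_2,P_3$ attached to pairwise distinct primes, the identity $|P_1P_2P_3| = |P_1||P_2||P_3|$ holds if and only if the multiplication map $P_1 \times P_2 \times P_3 \to G$, $(x_1,x_2,x_3) \mapsto x_1x_2x_3$, is injective.

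First I would establish this dictionary precisely. One direction is trivial: a nontrivial relation $x_1x_2x_3 = 1$ with $x_i \in P_i$ exhibits two distinct preimages of $1$, namely $(x_1,x_2,x_3)$ and $(1,1,1)$, so the map is not injective and $|P_1P_2P_3| < |P_1||P_2||P_3|$. For the converse, suppose $x_1x_2x_3 = y_1y_2y_3$ with $(x_i) \neq (y_i)$. Setting $a = y_1^{-1}x_1 \in P_1$, $b = x_2 y_2^{-1} \in P_2$, and $c = x_3 y_3^{-1} \in P_3$, a direct rearrangement gives $a\,(x_2 c x_2^{-1})\,b = 1$, where $x_2 c x_2^{-1}$ lies in the Sylow $p_3$-subgroup $x_2 P_3 x_2^{-1}$. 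The crucial point is that coprimality forces all three factors to be nontrivial: if, say, $c = 1$, the relation collapses to $ab = 1$, equating a $p_1$-element with a $p_2$-element with $p_1 \neq p_2$, whence $a = b = 1$ and so $(x_i) = (y_i)$, a contradiction. Thus a failure of injectivity produces a genuine relation among nontrivial elements drawn from three Sylow subgroups for the three distinct primes.

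With the dictionary in hand, I would deduce (i) $\Rightarrow$ (ii) from the (elementary) forward direction of Theorem~\ref{main2}: if $G$ is solvable there is no nontrivial relation $x_1x_2x_3 = 1$ among elements of coprime prime-power order, so by the dictionary every triple of Sylow subgroups satisfies the factorization identity; this is precisely what the proof of \cite[Theorem~2]{barry} supplies. For (iii) $\Rightarrow$ (i) I would argue contrapositively. If $G$ is not solvable, Theorem~\ref{main3} produces nontrivial $x_1,x_2,x_3$ with $x_1x_2x_3 = 1$, where $x_1$ is a $2$-element, $x_2$ a $p_2$-element with $p_2 \in \{3,5\}$, and $x_3$ a $p_3$-element for a third prime. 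Choosing Sylow subgroups containing these elements, the dictionary shows the factorization identity fails for this triple, contradicting (iii).

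The one point requiring care — and the main obstacle — is the bookkeeping of the ordering of the primes. The relation produced by the dictionary arises in the cyclic order $(p_1,p_3,p_2)$ rather than the order $(2,p_2,p_3)$ demanded by Theorem~\ref{main3}. This is harmless: since $xyz = 1$ is preserved under cyclic permutation and passes to $z^{-1}y^{-1}x^{-1} = 1$ under inversion, one can always rewrite the relation in the form (a $2$-element)(a $\{3,5\}$-element)(a $p_3$-element)$\,=1$, with all factors still nontrivial. The remaining subtlety is purely one of quantifiers: every element of prime-power order lies in some Sylow subgroup, and all such are conjugate, so ``a relation among some Sylow subgroups'' and ``a relation among $p_i$-elements'' describe the same phenomenon. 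This is exactly what lets the universally quantified conditions (ii) and (iii) match the element conditions of Theorems~\ref{main2} and \ref{main3}.
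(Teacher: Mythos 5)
Your proof is correct and follows essentially the same route as the paper, whose entire argument for this corollary is to combine Theorem \ref{main3} (and the trivial direction of Theorem \ref{main2}) with Barry's proof of the equivalence between the factorization identity $|P_1P_2P_3|=|P_1||P_2||P_3|$ and injectivity of the multiplication map --- exactly the ``dictionary'' you establish by hand via the rearrangement $a\,(x_2cx_2^{-1})\,b=1$ and the coprimality argument. Nothing further is needed; your ordering worry is in fact moot, since Theorem \ref{main2} is symmetric in the three primes and in the (iii) $\Rightarrow$ (i) direction the triple from Theorem \ref{main3} already comes in the required order.
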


Note that  Theorem \ref{main2} (and Theorem \ref{main3}) depends on Thompson's results 
but not on the full classification of finite simple groups. In fact, the only results 
in this paper which depend on the full classification are the ones  in \S5.    

In the next section, we prove Theorem \ref{main1} and the more general
Theorem \ref{main1gen}.  In section 3 we
prove Theorem \ref{main3} (which includes Theorem \ref{main2}).   In 
section 4, we elaborate on the fact that one cannot just take elements of prime order
in Theorem \ref{main2}. In section 5, using the full classification of finite
simple groups,  we 
characterize the $p$-solvable finite groups (for $p \geq 3$):

\begin{thm} \label{main4}  
Let $p$ be an odd prime and $G$ be a finite group. Then $G$ is $p$-solvable if and only
if  $G$ does not admit a triple of nontrivial elements $x,y,z$ with $xyz = 1$, $x$ a 
$2$-element, $y$ a $p$-element, and $z$ a $q$-element for any odd prime $q \neq  p$.
\end{thm}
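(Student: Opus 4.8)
The plan is to treat the two implications separately; the forward direction is elementary, while the reverse one is where the classification of finite simple groups enters.

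\emph{Forward direction.} I would prove the contrapositive, that a $p$-solvable group admits no such triple, by induction on $|G|$. Suppose $G$ is $p$-solvable of least order carrying nontrivial $x,y,z$ with $xyz=1$, $x$ a $2$-element, $y$ a $p$-element, and $z$ a $q$-element for some odd prime $q\neq p$. Because $G$ is a nontrivial $p$-solvable group, no nonabelian composition factor has order divisible by $p$, so every minimal normal subgroup of $G$ is either a $p$-group or a $p'$-group; hence $N:=O_p(G)$ or $N:=O_{p'}(G)$ is nontrivial. Passing to $\bar G=G/N$, the images $\bar x,\bar y,\bar z$ still satisfy $\bar x\bar y\bar z=1$ and are of the same prime types. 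A short check shows all three images remain nontrivial: a $p$-element cannot die modulo a $p'$-group and a $2$- or $q$-element cannot die modulo a $p$-group, and if a single image became trivial the relation would identify two elements of coprime prime-power order, forcing both to be trivial. Thus $\bar G$ is a smaller $p$-solvable counterexample, contradicting minimality.

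\emph{Reverse direction: reductions.} For the converse I would take $G$ to be a minimal counterexample: not $p$-solvable, yet admitting no triple. Minimality forces every proper subgroup to be $p$-solvable, since a triple in a subgroup is a triple in $G$. I would then reduce to an almost simple group. If $M$ is a minimal normal subgroup of $G$ with $G/M$ not $p$-solvable, then $G/M$ carries a triple by induction and I must lift it. The case where $M$ is a $p$-group is elementary: here $M\le\Phi(G)$ (otherwise a maximal complement would exhibit $G/M$ as a quotient of a proper, hence $p$-solvable, subgroup), and one lifts $\bar x$ to a $2$-element $x$ and $\bar z$ to a $q$-element $z$ by coprime Schur--Zassenhaus lifting, then sets $y:=x^{-1}z^{-1}$, which is automatically a $p$-element because $M$ is a $p$-group and $yM$ is a $p$-element. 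The case where $M$ is a $p'$-group is the genuine lifting problem, and here I would invoke Theorem \ref{main1gen} together with Proposition \ref{split} (via Isaacs' theorem) to lift the coprime constituents of the triple. Applied also to the nonabelian case $M\cong S^k$, these reductions leave the situation in which $G$ is almost simple with socle a nonabelian simple group $S$ of order divisible by $p$; since a triple inside $S$ is a triple in $G$, it suffices to produce one in $S$.

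\emph{Reverse direction: simple groups and the main obstacle.} The crux, and the only place the full classification is needed, is to show that every nonabelian finite simple group $S$ with $p\mid|S|$ contains a $2$-element $x$, a $p$-element $y$, and a $q$-element $z$ for some odd prime $q\neq p$ with $xyz=1$; equivalently, a $2$-element and a $p$-element whose product has odd prime-power order prime to $p$. I would run through the families of simple groups, using for the generic cases the Frobenius class-multiplication formula --- writing the number of factorizations $xy=z^{-1}$ as $\sum_\chi \chi(x)\chi(y)\chi(z^{-1})/\chi(1)$ over $\Irr(S)$ and showing it is positive via character bounds --- and for groups of Lie type exploiting the structure of maximal tori and regular unipotent elements to choose $q$ as a prime dividing a convenient torus order. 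The main obstacle I anticipate is achieving this uniformly: the defining-characteristic case (where $p$-elements are unipotent) and the rank-one groups $\PSL_2(\ell^f)$, together with the Suzuki and Ree groups whose sets of prime divisors are most restricted, are the delicate configurations in which one must verify by hand that a suitable third prime $q\neq p$ exists and that the required product can be realized; the sporadic groups are then settled by direct computation.
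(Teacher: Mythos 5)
Your forward direction is correct and is essentially the paper's own argument. The reverse direction, however, has a genuine gap at its core: the reduction to \emph{simple} groups does not work, because of the lifting obstruction through \emph{central} extensions. In a minimal counterexample $G$, every proper subgroup is $p$-solvable, so $G$ is perfect and (by the very complement argument you use for $p$-group kernels) every proper normal subgroup lies in $\Phi(G)$; hence $G/\Phi(G)\cong S$ is simple and $G$ is a perfect Frattini cover of $S$. In particular $G$ is never almost simple with $S<G$ (then $S$ would be a proper non-$p$-solvable subgroup), and the case you ultimately cannot avoid is $\Phi(G)=Z(G)$, i.e.\ $G$ quasisimple. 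There, producing a $(2,p,q)$-triple in $S=G/Z(G)$ is \emph{not} sufficient: such a triple need not lift to $G$. Theorem~\ref{main1gen}, which you invoke to do the lifting, does not remove this obstruction---it only asserts that the set of products of lifts is a coset of $J=[F,X]$, and when $F$ is central $J=1$, so that coset is a single element (the lifting invariant), which can be nontrivial. The paper's Example~\ref{a7} (a $(2,5,7)$-triple in $\AAA_7$ with no lift to $3\cdot\AAA_7$) and Example~\ref{slp} (triples in $\PSL_p(q)$ with no lift to $\SL_p(q)$) show this failure is real, exactly when the kernel's prime $r$ lies outside $\{2,p,q\}$, where the ``absorb the error into the element of matching prime'' trick is unavailable. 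So the crux is not ``every simple $S$ with $p\mid |S|$ has a triple'' but ``every \emph{quasisimple} group all of whose proper subgroups are $p$-solvable has a triple''---this is the paper's Proposition~\ref{psol2}, which works directly in the Schur covers ($2\AAA_p$, $6\AAA_7$, $\SL_n(q)$, $\SU_n(q)$, $2^2\cdot\Sz(8)$, $3\cdot Fi'_{24}$, \dots) via Gow's lemma on regular semisimple classes, weakly orthogonal tori, and explicit character estimates, and which is the bulk of Section 5. Your proposal has no counterpart to this step.

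Two smaller omissions. First, both Theorem~\ref{main1gen} and the conjugation argument for noncentral abelian kernels (Lemma~\ref{abelian}) require the lifted elements to \emph{generate} $G$; in the paper this comes from combining the forward implication with minimality (the subgroup generated by the lifts admits a triple, hence is not $p$-solvable, hence is all of $G$, using $N\leq \Phi(G)$), and you never address it. Second, the paper does not prove triple-existence for all simple groups of order divisible by $p$, but only for the minimal non-$p$-solvable ones, whose CFSG classification (Lemma~\ref{psol1}) cuts the verification down to a short list of families; the blanket statement you propose to verify family-by-family is far more than is needed and would be a substantially larger undertaking.
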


   In section \ref{even}, we give examples to show that
Theorem \ref{main1} fails for $p=2$.   In the final section,  
using Proposition \ref{split}, 
we give a short proof of
a theorem of Feit and Tits \cite{FT} about the minimal dimension of a representation
of a group which has a section isomorphic to a given simple group. 

We use the notation of \cite{Atlas} for various finite simple groups
(in particular, $\PSL_n(q)$, $\PSU_n(q)$, $\PSp_{2n}(q)$, and
$\OO^{\pm}_{n}(q)$ stands for ${\mathrm {PSL}}_n(q)$, 
${\mathrm {PSU}}_n(q)$, ${\mathrm {PSp}}_{2n}(q)$, and $P\Omega^{\pm}_{n}(q)$,
respectively).

\section{Lifting}  \label{lifting} 

The following statement is a key ingredient in our further considerations. It is 
essentially a consequence of a result of Isaacs \cite[Theorem 9.1]{I1}, and it is 
probably also known to Dade. For the sake of completeness, we give an independent
proof of it.  

\begin{prop}\label{split}
Let $G$ be a finite group and $N$ a non-abelian normal $p$-subgroup of $G$ for a prime 
$p$. Assume that $N$ is minimal among noncentral normal subgroups of $G$. 

{\rm (i)} If $\varphi \in \Irr(Z(N))$ is nontrivial on $[N,N]$, 
then $\varphi$ is fully ramified with respect to $N/Z(N)$, i.e.  
$\varphi^{N} = e\theta$ for some $\theta \in \Irr(N)$ and $e^2 = |N/Z(N)|$.

{\rm (ii)} Assume $p > 2$. Then $G/Z(N)$ splits over $N/Z(N)$. In particular,
$N$ is not contained in $\Phi(G)$.
\end{prop}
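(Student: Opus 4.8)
The plan is to wring strong structural constraints out of the minimality of $N$, deduce (i) from the nondegeneracy of a commutator form, and then get the splitting in (ii) from the fully ramified character produced in (i) together with the oddness of $p$. First I would record the consequences of minimality. Since $Z(N)$, $[N,N]$ and $\Phi(N)$ are characteristic in $N$ and proper (as $N$ is nonabelian), each is a normal subgroup of $G$ lying strictly inside $N$, so by minimality each is central: $[N,N]\le\Phi(N)\le Z(N)\le Z(G)$. In particular $N/Z(N)$ is elementary abelian; writing $V=N/Z(N)$, the same reasoning shows that any $\FF_p[G]$-submodule of $V$ pulls back to a normal subgroup of $G$ strictly between $Z(N)$ and $N$, so $V$ is an \emph{irreducible} $\FF_p[G]$-module. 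Finally, because $Z(N)\le Z(G)$, conjugation fixes $Z(N)$ pointwise, so every $\varphi\in\Irr(Z(N))$ is $G$-invariant.

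For (i) I would introduce the alternating $\FF_p$-bilinear form $B_\varphi$ on $V$ given by $B_\varphi(xZ(N),yZ(N))=\varphi([x,y])$, well defined since $[N,N]\le Z(N)$ has exponent $p$ and $V$ is elementary abelian. Its radical is the image of $C=\{x\in N:\varphi([x,N])=1\}$, and the key point is that $C$ is $G$-invariant: for $g\in G$ and $x\in C$ one has $[x^g,n]=[x,n^{g^{-1}}]^g=[x,n^{g^{-1}}]$ because $[N,N]\le Z(G)$, and $\varphi$ is $G$-invariant, so $\varphi([x^g,n])=1$ for all $n$. Thus $C\trianglelefteq G$ with $Z(N)\le C\le N$; since $\varphi$ is nontrivial on $[N,N]$ we have $C\neq N$, and irreducibility of $V$ forces $C=Z(N)$. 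Hence $B_\varphi$ is nondegenerate, and the standard theory of fully ramified characters (this is where \cite[Theorem 9.1]{I1} enters) yields a unique $\theta\in\Irr(N)$ over $\varphi$ with $\theta(1)^2=|N/Z(N)|$ and $\varphi^N=\theta(1)\theta$.

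For (ii), the nondegenerate form makes $V$ a symplectic module on which $G$ acts through a subgroup of $\Sp(V)$, and the character $\theta$ from (i) is $G$-invariant because $\varphi$ is. I would then pass to the projective representation of $G$ extending an irreducible representation $D$ affording $\theta$; since $D|_{Z(N)}$ is scalar, this descends to a homomorphism $\rho\colon G/Z(N)\to\PGL_{e}(\CC)$, where $e=\theta(1)$, whose restriction to $N/Z(N)$ is injective (as nondegeneracy forces $Z(\theta)=Z(N)$) with self-centralizing Heisenberg image $\mathcal V\cong V$. A complement to $\mathcal V$ in $\rho(G/Z(N))$ pulls back, via the modular law, to a complement to $N/Z(N)$ in $G/Z(N)$, so it suffices to split off $\mathcal V$.

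The main obstacle, and the only place the hypothesis $p>2$ is used, is this last splitting. Here $\rho(G/Z(N))/\mathcal V$ acts on $\mathcal V$ through $\Sp(V)$, and the required complement exists precisely because for odd $p$ the projective Weil representation of $\Sp(V)$ lifts to an ordinary one; equivalently, for $p$ odd the $p$-th power map $V\to Z(N)$ is $\FF_p$-linear and the associated $2$-cocycle is a coboundary. This is exactly the content of the symplectic argument of \cite{I1}, and it fails for $p=2$, where the power map is quadratic and one meets the metaplectic double cover. Granting the lift, the complement pulls back to a subgroup $H\le G$ with $H\cap N=Z(N)$ and $HN=G$; since $N\not\le Z(N)$, this $H$ is proper, and a proper subgroup with $HN=G$ shows that $N\not\le\Phi(G)$.
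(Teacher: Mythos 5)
Your proof is correct, and for part (i) it is essentially the paper's own argument in different clothing: the paper uses minimality to show $[x,N]=[N,N]$ for every noncentral $x\in N$ and deduces that any constituent of $\varphi^N$ vanishes off $Z(N)$, while you use minimality in the equivalent form that $V=N/Z(N)$ is an irreducible $\FF_p[G]$-module to get nondegeneracy of $B_\varphi$; both then quote Isaacs for full ramification. For part (ii) your execution genuinely differs, although it rests on the same classical fact. The paper stays inside finite group theory: it passes to $P=N/\Ker(\varphi)$ and spends the bulk of its proof using minimality a second time to show $P$ is extraspecial of exponent $p$ with cyclic center, then invokes the well-known splitting $\Aut_1(P)=\Inn(P)\rtimes\Sp_{2n}(p)$ and pulls a complement back to $G/Z(N)$ by the modular law. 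You instead embed $G/Z(N)$ into $\PGL_e(\CC)$ through the projective representation attached to the $G$-invariant fully ramified character $\theta$, and split off the Heisenberg image $\mathcal{V}$ using the linearity, for odd $p$, of the Weil representation of $\Sp_{2n}(p)$ --- which is the same fact as the splitting of $\Aut_1$ of the exponent-$p$ extraspecial group, viewed on the $\GL_e(\CC)$ side. What your route buys: since $\CC^{\times}$ is divisible and $p$ is odd, an exponent-$p$ extraspecial group appears for free inside the preimage $\CC^{\times}D(N)$ of $\mathcal{V}$, namely as its $p$-torsion subset, which is closed under multiplication because $(xy)^p=x^py^p[y,x]^{p(p-1)/2}$ and $p$ is odd; so you never need the paper's trickiest minimality step (that $\{x\in N \mid x^p\in\Ker\varphi\}$ equals $N$). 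What you should still make explicit is the bridge from ``the Weil representation lifts'' to ``a complement to $\mathcal{V}$ exists in $\rho(G/Z(N))$'': one needs that $N_{\PGL_e(\CC)}(\mathcal{V})=\mathcal{V}\rtimes\Sp_{2n}(p)$ (the kernel of its action on $\mathcal{V}$ is $\mathcal{V}$ itself by your self-centralization remark, and the lifted Weil representation supplies a complement mapping onto $\Sp_{2n}(p)$), after which Dedekind's modular law applied to the subgroup $\rho(G/Z(N))\supseteq\mathcal{V}$ finishes the splitting. Finally, your aside attributing the failure at $p=2$ to ``the metaplectic double cover'' is not quite the right diagnosis: over $\FF_2$ the obstruction is that the squaring map is a quadratic form, so the acting group becomes an orthogonal group and one meets non-split extensions of type $2^{2n}\cdot\OO^{\pm}_{2n}(2)$ --- exactly the counterexample the paper records after the proposition --- but this side remark does not affect the correctness of your proof for odd $p$.
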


\begin{proof}
1) Observe that since $N$ is non-abelian, $N$ is noncentral in $G$. 
Since $1 < Z(N) < N$, the minimality of $N$ implies that $Z(N) \leq Z(G)$, 
and so $Z(N) = Z(G) \cap N$. Similarly, $[N,N] \leq Z(G)$, whence 
$[N,N] \leq Z(N)$ and $N$ is nilpotent of class $2$. It follows that 
for all $x,y,z \in N$, $[x,z][y,z] = [xy,z]$. In particular,
for all $x \in N$, the set $[x,N] := \{ [x,n] \mid n \in N\}$ is a subgroup of
$[N,N]$. Next we claim that $[x,N] = [N,N]$ for all $x \in N \setminus Z(N)$. 
Assume the contrary: $X := [x,N] < [N,N]$. Since $N$ has class $2$, 
$Y := \{ y \in N \mid [y,N] \leq X \}$ is a subgroup of $N$ containing 
$x$ and $Z(N)$. Next, $X \leq Z(G)$ is normal in $G$, hence $Y \lhd G$. 
If $Y = N$, then $[N,N] \leq X$, a contradiction. Therefore $Y < N$, and
so $Y \leq Z(G) \cap N = Z(N)$ by minimality of $N$, again a contradiction.

2) Now we prove (i). By the assumption,  $\varphi(z) \neq 1$ for some $z \in [N,N]$. 
Let $\theta \in \Irr(N)$ be any irreducible constituent of $\varphi^N$. 
We claim that $\theta(x) = 0$ for all $x \in N \setminus Z(N)$. 
(Indeed, let $\Theta$ be a complex representation affording the character $\theta$. 
By the previous paragraph, there is some $n \in N$ such that $z = x^{-1}n^{-1}xn$. 
Since $\Theta(z) = \varphi(z)I$, it follows that 
$\Theta(n)^{-1}\Theta(x)\Theta(n) = \varphi(z)\Theta(x)$. Taking traces, we see
that $\theta(x)(\varphi(z)-1) = 0$ and so $\theta(x) = 0$ by the choice of $z$.)
It is well known that in this case $\varphi$ is fully ramified with respect to 
$N/Z(N)$ (cf. \cite[Lemma 2.6]{I1}).

3) From now on, we assume that $p > 2$.
Let $M \geq Z(N)$ be chosen such that 
$M/Z(N) = \Omega_1(N/Z(N))$. Then $Z(G) \cap N = Z(N) < M \leq N$ and $M \lhd G$. 
The minimality of $N$ again implies that $M = N$, i.e. $N/Z(N)$ is elementary abelian.
Now for any $x,y \in N$ we have $x^p \in Z(N)$ and so 
$[x,y]^p = [x^p,y] = 1$. Since $[N,N]$ is abelian, it follows that $[N,N]$ is 
also elementary abelian. As $p > 2$, this also implies that 
$(xy)^p = x^py^p$ for all $x,y \in N$.

4) Now we pick any nontrivial $\lambda \in \Irr([N,N])$. 
Since $Z(N) \geq [N,N]$ is abelian, $\lambda$ extends to
$\varphi \in \Irr(Z(N))$, which is $G$-invariant because $Z(N) \leq Z(G)$. 
Let $K := \Ker(\varphi)$ and let $P := N/K$; in particular,
$K \leq Z(G)$. We will now show that 
$P \cong p^{1+2n}_{+}$, an extraspecial $p$-group of exponent $p$ of order
$p^{1+2n}$ for some $n \geq 1$. 

First, $K \cap [N,N] = \Ker(\lambda)$ has index $p$ in $[N,N]$ as $[N,N]$ is 
elementary abelian. Hence $[P,P] = K[N,N]/K$ is cyclic of order $p$;
in particular, $P$ is non-abelian. 
Choose $N_1 \geq Z(N)$ such that $N_1/K = Z(N/K)$. Then $N_1 < N$ and 
$N_1 \lhd G$. The minimality of $N$ implies that $N_1 = Z(N)$. Also,
$\varphi$ is a faithful linear character of $Z(N)/K = N_1/K$. We have shown
that $Z(P) = Z(N)/K$ is cyclic.

Next we claim that $P$ contains noncentral elements of order $p$. Indeed,
fix $x \in P$ such that $|x| = \exp(P) = p^s$. 
Since $P$ is non-abelian, we can find $y \notin \langle x,Z(P) \rangle$. 
Then $|y| = p^t$ with $1 \leq t \leq s$ by the choice of $x$. Also, since
$P/Z(P) \cong N/Z(N)$ is elementary abelian, $x^p,y^p \in Z(P)$. Now 
$x^p$, respectively $y^p$, is an element in the cyclic group $Z(P)$, of 
order $p^{s-1}$, respectively $p^{t-1}$, and $t \leq s$. It follows that there is 
some integer $k$ such that $y^p = x^{kp}$. As shown in 3), we now have 
$(x^{-k}y)^p = x^{-kp}y^p = 1$ and $x^{-k}y \notin Z(P)$, i.e. $x^{-k}y$ is a noncentral
element of order $p$ in $P$, as desired.

Let $N_2 := \{ x \in N \mid x^p \in K\}$. If $x, y \in N_2$, then by 3) we have 
$(xy)^p = x^py^p \in K$ and so $xy \in N_2$. Hence $N_2 \lhd G$. By the previous 
claim, $N_2$ is not contained in $Z(N) = Z(G) \cap N$. The minimality of $N$ now 
implies that $N_2 = N$. We have shown that $\exp(P) = p$. Since $Z(P)$ is cyclic, we 
also have $Z(P) \cong C_p \cong [P,P]$, and so $Z(P) = [P,P]$. But 
$P/Z(P) \cong N/Z(N)$ is elementary abelian, hence $\Phi(P) = Z(P)$. 
Thus $P$ is an extraspecial $p$-group of exponent $p$, as stated.

5) It is well known that $\Aut_1(P)$, the group of all automorphisms of
$P$ which act trivially on $Z(P)$, is a semidirect product $IS$, where 
$I = \Inn(P) \cong P/Z(P)$ is of order $p^{2n}$ and $S \cong \Sp_{2n}(p)$. Now set
$C := C_G(N/K)$ so that $H := G/C$ embeds in $\Aut_1(P)$. Since $[N,N] \not\leq K$,
we have $Z(N) \leq N \cap C < N$, and so $N \cap C = Z(N)$ by the minimality of $N$. 
Thus $NC/C \cong N/Z(N) \cong P/Z(P) \cong I$. Now we can certainly write 
$H$ as a semidirect product of $NC/C$ and $H \cap S$. Let $U \geq C$ be such
that $U/C = H \cap S$; in particular $U \cap NC = C$ since $S \cap I = 1$. 
Then $G = (NC)U = UN$, and $U \cap N = (U \cap NC) \cap N = C \cap N = Z(N)$, i.e. 
$G/Z(N)$ splits over $N/Z(N)$.

Suppose now that $N \leq \Phi(G)$. Then $G =\Phi(G)U$ implies that 
$G = U$, contradicting the fact that $U \cap N = Z(N)$.
\end{proof}  
  
If we consider groups of even order, the previous result fails.  For example,
there is a non-split extension of an extraspecial $2$-group of order
$2^{1+2a}$ by an orthogonal group $\OO_{2a}(2)$ (if $a \geq 5$).

\begin{lemma}  \label{abelian}  
Let $p$ be a prime and let $G$ be a finite group generated by 
elements $g_1, \ldots, g_r$.  
Assume that either $G = O^p(G)$, or all $g_i$ are $p'$-elements.  
Let $N$ be an abelian
$p$-subgroup of $G$ that is a minimal noncentral normal subgroup of $G$.  
Let $C_i:=[g_i,N]$. Then
$(g_1C_1)(g_2C_2) \cdots (g_rC_r) = (g_1 \ldots g_r)N$. Furthermore, 
every element of $g_iC_i$ is $N$-conjugate to $g_i$ and is contained in
the coset $g_iN$.
\end{lemma}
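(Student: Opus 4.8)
The plan is to dispose of the last two assertions by a direct computation, then reduce the set-product identity to a single statement about a sum of commutator subgroups, and finally settle that statement by Pontryagin duality after establishing $[G,N]=N$.

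First I would record the elementary computation $g_i[g_i,n]=n^{-1}g_in=g_i^{\,n}$ for $n\in N$, so that $g_iC_i=\{g_i^{\,n}\mid n\in N\}=g_i^{N}$ is precisely the $N$-conjugacy class of $g_i$. This yields the final two assertions at once, and since $g_iC_i\subseteq g_iN$ it gives $(g_1C_1)\cdots(g_rC_r)\subseteq g_1N\cdots g_rN=(g_1\cdots g_r)N$ using $N\lhd G$. For the reverse inclusion I would collect commutators to the right: writing $h_i=g_{i+1}\cdots g_r$ and using that $N$ is abelian and normal, $g_1c_1\cdots g_rc_r=(g_1\cdots g_r)\,c_1^{h_1}\cdots c_r^{h_r}$ with each $c_i^{h_i}\in N$. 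Hence the product set equals $(g_1\cdots g_r)D$, where $D=\prod_i C_i^{h_i}$ is a subgroup of the abelian group $N$, and the whole problem reduces to proving $D=N$. Setting $k_i:=g_i^{h_i}$ one checks $C_i^{h_i}=[k_i,N]$, so $D=\sum_i[k_i,N]$, and a downward induction on $i$ (from $k_r=g_r$) shows $\langle k_1,\dots,k_r\rangle=G$.

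Next I would prove $[G,N]=N$, which is where the hypothesis enters. As $N$ is non-central and normal, $[G,N]$ is a nontrivial normal subgroup of $G$ contained in $N$, so by minimality it is either $N$ or central. To exclude the central case, suppose $[G,N]\le Z(G)$; then for each fixed $n\in N$ the map $\theta_n\colon g\mapsto[g,n]$ is a homomorphism from $G$ into the $p$-group $[G,N]$ (the commutator identities are additive because the values are central), so its image is an abelian $p$-quotient of $G$. If $G=O^p(G)$ this forces $\theta_n\equiv1$; if instead every $g_i$ is a $p'$-element, then $\theta_n(g_i)=1$ (it has both $p$-power and $p'$ order), and since the $g_i$ generate $G$ and $\theta_n$ is a homomorphism, again $\theta_n\equiv1$. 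Either way $n\in Z(G)$ for all $n$, contradicting the non-centrality of $N$. Hence $[G,N]=N$.

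The genuine difficulty is the surjectivity $D=N$: the subgroup $D=\sum_i[k_i,N]$ is in general \emph{not} $G$-invariant, so I cannot simply apply minimality to it. The clean route is Pontryagin duality. Put $N^{*}=\Hom(N,\CC^{\times})$ with the contragredient $G$-action. A character annihilates $[k_i,N]$ iff it is fixed by $k_i$, so $[k_i,N]^{\perp}=(N^{*})^{k_i}$; intersecting over $i$ and using $\langle k_i\rangle=G$ gives $D^{\perp}=\bigcap_i(N^{*})^{k_i}=(N^{*})^{G}$. But $(N^{*})^{G}$ is exactly the group of characters trivial on $[G,N]$, i.e. $(N/[G,N])^{*}$, which is trivial because $[G,N]=N$. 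Therefore $D^{\perp}=1$ and $D=N$, completing the proof. The crux is the identity $D^{\perp}=(N^{*})^{G}$, which converts the awkward non-invariant sum $D$ into the $G$-fixed points on the dual and lets the already-established equality $[G,N]=N$ finish the argument.
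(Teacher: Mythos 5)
Your proof is correct, and it diverges from the paper's at the decisive step. Both arguments make the same reduction (collecting commutators to write the product set as $(g_1\cdots g_r)D$ with $D=\prod_i C_i^{h_i}$) and both prove $[G,N]=N$ in essentially the same way (the paper fixes a $p'$-element $h$ and uses $[h^{p^a},n]=[h,n]^{p^a}=1$ where $p^a=\exp(N)$; you fix $n$ and use that $g\mapsto[g,n]$ is a homomorphism into a central $p$-group --- the same bilinearity, applied in the other variable). The difference is how $D=N$ is finished. The paper runs a downward induction on the specific telescoped subgroups $D_i=C_i^{g_{i+1}\cdots g_r}$: it shows $g_r$ normalizes $M=D_1\cdots D_r$, deduces $C_{r-1}\le M$, and so on, concluding that every $g_i$ acts trivially on $N/M$, whence $M\ge[G,N]=N$. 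You instead pass to the conjugate generators $k_i=g_i^{h_i}$ and prove the clean general fact that $\prod_i[k_i,N]=[G,N]$ for \emph{any} generating set of $G$, via Pontryagin duality: $D^{\perp}=\bigcap_i(N^{*})^{k_i}=(N^{*})^{G}=(N/[G,N])^{*}$. That is a genuinely different, more conceptual route, and it isolates a reusable statement. One remark, though: your stated reason for invoking duality --- that $D$ is ``in general not $G$-invariant'' --- is false. Since $[d,k_j]=[k_j,d]^{-1}\in[k_j,N]\le D$ for $d\in D$, we get $d^{k_j}=d\,[d,k_j]\in D$, so each $k_j$ normalizes $D$ and hence $D\lhd G$; then every $k_j$ acts trivially on $N/D$, so $[G,N]\le D$ and you are done in one line, no characters needed. (This invariance is exactly what the paper's induction establishes, in telescoped form.) So the duality detour is unnecessary, but it is valid, and trading an elementary induction for a duality computation is a reasonable exchange: the paper's argument needs no character theory, while yours needs no bookkeeping of the ordering of the factors.
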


\begin{proof}  
Note that, since
$N$ is normal and abelian,  $C_i=\{[g_i,n] \mid  n \in N\}$ and it is a subgroup
of $N$.  Clearly,  any element of $g_iC_i$ is a conjugate of $g_i$
(by an element of $N$).  It is straightforward to see that
$$(g_1C_1)(g_2C_2) \cdots (g_rC_r)   =  (g_1 \cdots g_r) D_1 \cdots D_r,$$
where $D_r = C_r,  D_{r-1}=C_{r-1}^{g_r}, \ldots ,   D_1 = C_1^{g_2 \cdots g_r}$.

Suppose that $[G,N] \ne N$.  Then 
$[G,N]$ is central in $G$ by the minimality of $N$.  
If $h$ is any $p'$-element of $G$, this implies
that $[h^{p^a},n]=[h,n]^{p^a} = 1$ for all $n \in N$, if $\exp(N) = p^a$. 
It follows that all $p'$-elements of $G$ centralize $N$.
By the hypothesis, $G$ is generated by $p'$-elements, whence $G$ centralizes $N$,
a contradiction. Hence $[G,N] = N$.  
 
Set $M:= D_1 \cdots D_r$.  Note that $[g_r,N] \le M$ and so $g_r$ acts trivially
on $N/M$. Also, $[g_r,M] \leq [g_r,N] = D_r \leq M$, and so $M^{g_r} = M$. 
Hence $C_{r-1} = D_{r-1}^{g_r^{-1}} \leq M^{g_r^{-1}} = M$, whence $g_{r-1}$ acts 
trivially on $N/M$. Continuing in this manner, we see that each $g_i$ acts 
trivially on $N/M$. Since $G = \langle g_1, \ldots ,g_r \rangle$,
it follows that $M \ge [G,N]=N$. The last statement is obvious. 
\end{proof}

We can now prove Theorem \ref{main1}.
 
\begin{proof}   
Let $G = X/F$ as in the statement. We induct on $|F|$.  If
$F=1$, there is nothing to prove.  So assume that this is not the case. 
Observe that $O^{p}(X) = X$ and so $p$ does not divide $|X/[X,X]|$. Indeed,
$X/O^{p}(X)F \geq O^{p}(G) = G$, whence $O^{p}(X)F = X$ and so 
$O^{p}(X) = X$ since $F = \Phi(X)$.

First assume that $F \leq Z(G)$. Since $p$ is coprime to $|X/[X,X]|$, 
$F \leq [X,X]$. Thus $X$ is a central extension of $G$ with kernel $F$ contained
in $[X,X]$. Hence $F \hookrightarrow {\mathrm {Mult}}(G)$ by \cite[Corollary 11.20]{I2}.
In particular, $p$ divides $|{\mathrm {Mult}}(G)|$, a contradiction.  
 
Thus $F$ is not central and so we can take 
$N \leq F$ to be a minimal normal noncentral subgroup of $X$. 
By Proposition \ref{split}(ii), $N$ is abelian.  
 
By the induction hypothesis applied to $X/N$, 
we can choose $x_i \in X$ of order coprime to $p$ such 
that $g_i = x_iF$ and $x_1 \ldots x_r = fn$ for some $n \in N$. Note that 
$\langle x_1, \ldots x_n \rangle = X$. (Otherwise 
$Y := \langle x_1, \ldots x_n \rangle = X$ is contained in a maximal subgroup
$M$ of $X$. But $YF = X$ and $F = \Phi(X) \leq M$, so $M = X$, a contradiction.)
 By Lemma \ref{abelian} applied to $X$, 
there exist $y_i \in x_iN$ with $y_i$ conjugate
to $x_i$ such that  $y_1 \cdots y_r = (x_1 \cdots x_r) n^{-1}=f$.
\end{proof} 

Note that Theorem \ref{main1} can be viewed as a result about
branched coverings of Riemann surfaces.   Suppose that
$f~:~Y \rightarrow \PP^1$ is a Galois branched covering of
a Riemann surface $Y$ with Galois group $G$ and all ramification 
coprime to a given prime $p > 2$.   Let $G=X/F$ be a Frattini cover
with $F$ a $p$-group and assume that $F=[X,F]$.   Then
there exists an unramified $F$-cover $Z \rightarrow Y$ with
$Z \rightarrow \PP^1$ Galois.   
 
Bailey and Fried \cite{BF} have shown that if $p=2$ then lifting to a
central Frattini extension is not always possible.  
 
More generally, we cannot remove the condition that the prime $p$ is coprime
to   $|{\mathrm {Mult}}(G)|$ in Theorem \ref{main1}.  
We give families of such examples for any prime $p$.
 
\begin{example}  
{\em Let $G = \PSL_2(q)$ with $q > 3$ an odd prime power and $q$ not 
a Fermat prime. Let $C$ be a conjugacy class of elements of 
odd prime power order dividing $(q-1)/2$ such that
 $G = \langle x, y \rangle$ with $x, y \in C$ and $z^{-1} = xy \in C^2$
 (it is a straightforward computation to see that these exist -- see 
\cite[Lemma 3.14]{GM}
 or \cite{M}).   Let
 $X = \SL_2(q)$, and let $D$ be the conjugacy of elements 
 of odd order which is the lift of $C$ to $X$.   Then we cannot find
 $u,v \in D$ and $w \in D^{-2}$ with $uvw = 1$ and 
$X = \langle u,v,w \rangle$.
(Indeed, there is some $\alpha \in \FF_q^{\times}$ such that 
$\alpha u$, $\alpha v$, and $\alpha^{-2}w$ all have a one-dimensional 
fixed point subspace on the natural $X$-module $\FF_q^2$.  
Now Scott's Lemma \cite{S} implies that
$\langle \alpha u,\alpha v,\alpha^{-2}w \rangle$  cannot act irreducibly 
on $\FF_q^2$ and so $\langle u,v,w \rangle \neq X$. See also \cite{GM}). }
 \end{example}  
 
\begin{example} \label{a7} 
{\em Let $G = \AAA_7$, the alternating group on $7$ letters. Using the character 
table of $G$ as given in \cite{Atlas}, one can check that there are elements  
$x,y,z \in G$ of order $2$, $5$, and $7$, respectively, with $xyz = 1$
and $G = \langle x, y \rangle$. 
However, if we denote by $\hat{x}$, $\hat{y}$, $\hat{z}$ the lifts of the same order 
of these elements in the central cover $X = 3\cdot \AAA_7$, then there are no 
$(u,v,w) \in \hat{x}^X \times \hat{y}^X \times \hat{z}^X$ such that $uvw = 1$, as 
one can check using the character table of $X$ in \cite{Atlas}.}
\end{example}

The previous two examples are given for $p = 2$ and $p = 3$. In \S5, using 
the classification of irreducible groups generated by pseudoreflections, we will give a family of 
examples for any odd prime $p$, cf. Example \ref{slp}. 

We can prove a version of Theorem \ref{main1} where we do allow
$p$ to divide the order of the Schur multiplier. Of course, this includes
Theorem \ref{main1} as a special case (since then $J=F$). 

\begin{theorem} \label{main1gen}    Let $X$ be a Frattini cover of 
$G = X/F$ with $F$ of odd order. Set $J=[F,X]$.
   Let $g_1, \ldots, g_r \in G$  satisfy

{\rm (i)} $G = \langle g_1, \ldots, g_r \rangle$,

{\rm (ii)}  $g_1 \cdots g_r =1$, and

{\rm (iii)} the order of each $g_i$ is coprime to $|F|$.\\ 
Let  $X_i := \{x_i \in X \mid x_iF = g_i, |x_i| = |g_i|\}$.
Then $X_1 \cdots X_r$ is a coset of $J$ in $F$.
\end{theorem}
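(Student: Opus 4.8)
The plan is to prove the two inclusions separately: first that $X_1\cdots X_r$ is contained in a single coset of $J$, and then, by induction on $|F|$, that it fills that coset. For the containment I would pass to $\bar X = X/J$. Since $J = [F,X]$, the image $\bar F = F/J$ is central in $\bar X$, so $\bar X$ is a central Frattini cover of $G$ with kernel $\bar F$ of order dividing $|F|$, hence coprime to every $|g_i|$. In such a central extension each $g_i$ has a \emph{unique} lift of order $|g_i|$: if $\bar x$ and $\bar x z$ (with $z\in\bar F$ central) both have order $|g_i|$, then $(\bar x z)^{|g_i|} = z^{|g_i|} = 1$ forces $z=1$ by coprimality. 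Thus every $x_i\in X_i$ has one and the same image $\hat g_i$ in $\bar X$, so all products $x_1\cdots x_r$ with $x_i\in X_i$ agree modulo $J$; as $g_1\cdots g_r=1$ this common image lies in $\bar F$, and therefore $X_1\cdots X_r\subseteq cJ$ for a single coset $cJ$ of $J$ in $F$.

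For the reverse inclusion I would induct on $|F|$, which is nilpotent since $F\le\Phi(X)$. If $F\le Z(X)$ then $J=1$ and the first part already pins the product to a single point, which settles this case (and the base case $F=1$). Otherwise $F\not\le Z(X)$, and I choose $N$ minimal among the noncentral normal subgroups of $X$ \emph{contained in} $F$. Nilpotence of $F$ and minimality force $N$ to be a $q$-group for a single odd prime $q$, and every subgroup occurring in the proof of Proposition \ref{split} sits inside $N\le F$, so that result applies with minimality taken relative to $F$; since $N\le\Phi(X)$, Proposition \ref{split}(ii) forces $N$ to be abelian. I would also record two facts: every tuple $(x_i)\in X_1\times\cdots\times X_r$ generates $X$ (a Frattini argument, as the $g_i$ generate $G=X/\Phi(X)$), and the reduction $X\to X/N$ carries each $X_i$ \emph{onto} the corresponding set for $X/N$, because a $q'$-element of a given order lifts through the $q$-group $N$ to one of the same order by Schur--Zassenhaus.

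Now for the filling. Applying the induction hypothesis to $X/N$ shows that $X_1\cdots X_r$ reduces onto a coset of $[F/N,X/N]=JN/N$. Because $N$ is noncentral and all $x_i$ are $q'$-elements, Lemma \ref{abelian} gives $[X,N]=N\le[F,X]=J$, so $JN=J$ and hence $(X_1\cdots X_r)N=cJ$; thus $X_1\cdots X_r$ meets every $N$-coset in $cJ$. Finally, for any tuple with product $w$, Lemma \ref{abelian} (with $C_i=[x_i,N]$, so that $x_i C_i = x_i^{\,N}\subseteq X_i$) yields $(x_1C_1)\cdots(x_rC_r)=wN$, so $X_1\cdots X_r$ contains the \emph{entire} coset $wN$ of any element $w$ it meets. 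Being a union of $N$-cosets that meets every $N$-coset in $cJ$ and lies inside $cJ$, it equals $cJ$, completing the induction.

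The hard part is the central direction of $F$: conjugating a single lift by an element of $F$ can never move it (coprimality breaks the order otherwise), so Lemma \ref{abelian} has no analogue there, and this is exactly the Schur-multiplier obstruction exhibited by Example \ref{a7}. The resolution is structural rather than computational: Step~1 confines all of this central ambiguity to the coinvariant quotient $F/J$, where the product is rigidly determined and needs no filling, so that only the \emph{noncentral} layers are ever filled, and there Proposition \ref{split}(ii) guarantees the layer is abelian so that Lemma \ref{abelian} applies. This is precisely the point where $p$ odd is indispensable; for $p=2$ the peeled layer can be a nonsplit extraspecial group and the argument breaks, consistent with Proposition \ref{p=2}. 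The only delicate bookkeeping I anticipate is checking that minimality of $N$ relative to $F$ (rather than in all of $X$) suffices for both Proposition \ref{split} and Lemma \ref{abelian}, which holds because every normal subgroup produced in their proofs already lies in $N\le F$.
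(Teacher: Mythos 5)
Your proof is correct and follows essentially the same route as the paper's: containment in a single coset of $J$ via uniqueness of coprime-order lifts in the central extension $X/J$, then filling that coset by induction, peeling off a minimal noncentral normal subgroup whose abelianness comes from Proposition \ref{split}(ii) and whose cosets are filled by Lemma \ref{abelian}. The only differences are organizational --- you induct on $|F|$ and pick $N \le F$, deriving $N \le J$ afterwards, whereas the paper inducts on $|J|$, picks $N \le J$ directly, and disposes of the case ``$J$ central but nontrivial'' by the same commutator/coprimality argument that your version absorbs into the application of Lemma \ref{abelian}.
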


\begin{proof}   First assume that $J=1$, i.e. $F \le Z(X)$.
It follows that there is a unique lift $x_i \in X$ of $g_i$ 
with $|x_i|=|g_i|$ and $x_iF=g_i$. Then $x_1 \ldots x_r = f \in F$.

Now go back to the general case. Then $F/J \leq Z(X/J)$ and so, as before, 
$g_i$ has a unique lift to $X/J$ of the same order. It follows   
that $X_1 \cdots X_r$ is contained in some coset $fJ$
of $J$ in $F$. It remains to show that each element in 
$fJ$ is a product of elements in $X_i$. We will induct on $|J|$. 
Suppose first that $J$ is central in $X$. Then
for any $h \in X$ of order $m$ coprime to $|F|$ and any $w \in F$, we have
$1 = [h^m, w]=[h,w]^m$, whence $[h,w] = 1$ and so 
$h$ centralizes $F$. In particular, $x_i \in X_i$ centralizes $F$ for 
$i = 1, \ldots, r$. But $\langle x_1, \ldots ,  x_r \rangle = X$ since 
$F \le \Phi(X)$ and $G = \langle g_1, \ldots, g_r \rangle$. It follows
that $F \leq Z(X)$, $J=1$, and so we are done by the previous case.

Now we may assume that $J$ is not central in $X$.
Let $N$ be a minimal normal noncentral subgroup of $X$ contained
in $J$; in particular, $N$ is a $p$-group for some prime $p$ dividing 
$|F|$ and so $p > 2$. By Proposition \ref{split}, $N$ is abelian.
Observe that, if $x_i \in X_i$ and $n \in N$, 
then $|n^{-1}x_in| = |x_i| = |g_i|$ and 
$n^{-1}x_inF = x_iF = g_i$, i.e. $n^{-1}x_in \in X_i$. 
By the induction hypothesis, the statement holds for $X/N$.
Applying Lemma \ref{abelian}, we see that the statement holds in $X$ as well.
\end{proof} 

The coset of $J$ in the previous result can be thought of as the lifting 
invariant (or obstruction).  As Fried has observed, the different lifting invariants give
rise to different orbits for the Hurwitz braid group acting on the corresponding
Nielsen classes of $G$, i.e. on 
$$\{(h_1, \ldots, h_r)  \mid \langle h_1, \ldots, h_r \rangle = G,
    ~h_1 \cdots h_r=1, ~h_i \in g_i^G \}.$$ 
See \cite{BF, intro} for more details.

\section{Solvable Groups}

We first need Thompson's result on minimal simple groups \cite{jgt}. 

\begin{theorem}\label{simple}   
Let $G$ be a finite simple group such that every proper subgroup
is solvable.   Then  $G$ is one of the following groups:

{\rm (a)}  $\PSL_{2}(p)$ with $p \ge 7$ an odd prime and $p \equiv \pm 2 (\mod 5)$;

{\rm (b)}  $\PSL_{2}(2^p)$  with $p$ a prime;

{\rm (c)}  $\PSL_{2}(3^p)$ with $p$ an odd prime;

{\rm (d)}  $\Sz(2^p)$ with $p$ an  odd prime; 

{\rm (e)} $\SL_{3}(3)$.
\end{theorem}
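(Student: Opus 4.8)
The plan is to recognize Theorem~\ref{simple} as Thompson's determination of the minimal simple groups and to reconstruct it by local group-theoretic analysis organized around the isomorphism type of a Sylow $2$-subgroup $S$ of $G$. The first reduction is that $G$ has even order: by the Feit--Thompson theorem a group of odd order is solvable, while a nonabelian simple group is not, so $2 \mid |G|$ and I may fix $S \in \Syl_2(G)$. The minimality hypothesis is then exploited structurally. Every $2$-local subgroup $N_G(Q)$ (with $1 \ne Q \le S$), and indeed every maximal subgroup of $G$, is solvable, so $G$ is an $N$-group and its $p$-local subgroups are solvable for all $p$. For odd $p$ this already yields much: since $p$-local subgroups are solvable $G$ is $p$-constrained, and (invoking $p$-stability) Glauberman's $ZJ$-theorem shows that unless some $p$-local subgroup is strongly $p$-embedded, $N_G(Z(J(P)))$ controls strong fusion for $P \in \Syl_p(G)$; Frobenius's criterion then forces a normal $p$-complement, contradicting simplicity. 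Thus the odd-prime analysis is auxiliary, and the real work concentrates at the prime $2$.

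The central dichotomy I would set up is whether $G$ admits a strongly embedded subgroup. Analyzing fusion in $S$ through the Thompson subgroup $J(S)$ and the $ZJ$-machinery at the prime $2$, combined with the solvability of every $2$-local subgroup, forces either a strongly embedded subgroup or a Sylow $2$-subgroup of very restricted type. In the strongly embedded case I would invoke Bender's theorem (whose proof itself blends delicate local analysis with character theory), which pins $G$ down to $\PSL_2(2^n)$, $\Sz(2^n)$, or $\PSU_3(2^n)$.

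For the remaining configurations I would appeal to the classification of simple groups by the structure of $S$: the Gorenstein--Walter theorem handles dihedral $S$, yielding $\PSL_2(q)$ with $q$ odd or $\AAA_7$; the Alperin--Brauer--Gorenstein theorem handles semidihedral and wreathed $S$, producing among the minimal candidates $\SL_3(3) \cong \PSL_3(3)$, whose Sylow $2$-subgroup is semidihedral of order $16$; and Walter's theorem handles abelian $S$ (formally also producing $J_1$ and the Ree groups $\tw{2}G_2(3^n)$, which carry nonsolvable $2$-local subgroups and so are not even $N$-groups). The crucial and hardest point, which is the heart of Thompson's $N$-group analysis, is to prove that an $N$-group cannot have a Sylow $2$-subgroup outside this short list---equivalently, that solvability of every $2$-local subgroup forces small $2$-rank and one of the above normal forms. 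This global $2$-local reduction is where essentially all of the difficulty lies; everything else is bookkeeping by comparison.

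Once $G$ is known to be one of $\PSL_2(q)$, $\Sz(2^n)$, $\PSU_3(2^n)$, or $\SL_3(3)$ (the groups $\AAA_7$, $J_1$, $\tw{2}G_2(3^n)$ having already been discarded for containing the nonsolvable sections $\AAA_6$, $\PSL_2(11)$, $\PSL_2(3^n)$ respectively), I would impose minimality and read off the list by inspecting which maximal subgroups are nonsolvable. For $\PSL_2(q)$ the nonsolvable proper subgroups are the subfield subgroups $\PSL_2(q')$ with $q' \notin \{2,3\}$ and the copies of $\AAA_5 \cong \PSL_2(4) \cong \PSL_2(5)$; ruling these out forces $q = p$ prime with $p \ge 7$ and $p \equiv \pm 2 \pmod{5}$, or $q = 2^p$ with $p$ prime, or $q = 3^p$ with $p$ an odd prime, giving cases (a)--(c). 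For $\Sz(2^n)$ the proper nonsolvable subgroups are the subfield groups $\Sz(2^{n'})$, so minimality forces $n$ to be an odd prime, giving (d); and $\PSU_3(2^n)$ is excluded because it contains $\PSL_2(2^n)$, nonsolvable for $n \ge 2$ (while $\PSU_3(2)$ is not simple), leaving only $\SL_3(3)$ as (e). What survives is exactly the list (a)--(e). This final inspection is a routine, finite check of known subgroup structures; the genuine obstacle remains the global $2$-local classification that feeds into it.
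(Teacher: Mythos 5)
You should first know how the paper itself treats this statement: it gives no proof at all. Theorem \ref{simple} is Thompson's classification of minimal simple groups, quoted verbatim from \cite{jgt} and imported as a black box (the authors even emphasize that their main theorems rest on Thompson's results rather than on the full classification). So the relevant comparison is between your outline and the literature it would have to invoke.

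As a proof, your proposal has one genuine gap, and it is the one you flag yourself: everything hinges on the claim that a simple group all of whose proper subgroups are solvable either has a strongly embedded subgroup or has a Sylow $2$-subgroup that is abelian, dihedral, semidihedral, or wreathed. That dichotomy is not a consequence of the theorems you cite (Bender, Gorenstein--Walter, Alperin--Brauer--Gorenstein, Walter); it is precisely the hard core of Thompson's $N$-group analysis, and deferring it to ``Thompson's $N$-group analysis'' makes the reconstruction circular, since Theorem \ref{simple} is a corollary of that analysis. The downstream bookkeeping --- reading off minimality inside the families $\PSL_2(q)$, $\Sz(2^n)$, $\PSU_3(2^n)$, $\SL_3(3)$, and discarding $\AAA_7$, $J_1$, $\tw2 G_2(3^n)$ via nonsolvable sections or nonsolvable $2$-locals --- is accurate, but it is all conditional on that unproved reduction. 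There is also a concrete error in your ``auxiliary'' odd-prime paragraph: as written, it asserts that solvability of the $p$-locals plus $ZJ$-fusion control forces a normal $p$-complement for every odd $p$, which would show that no minimal simple group exists. The step fails because fusion control by a solvable subgroup does not yield a normal $p$-complement unless that subgroup itself has one; for instance in $\PSL_2(7)$ with $p=7$ one has $N_G(Z(J(P)))=7{:}3$, which controls fusion but has no normal $7$-complement, and the Glauberman--Thompson theorem then (correctly) gives no contradiction. In short, your text is a sound survey of how Theorem \ref{simple} can be recovered from the post-Thompson literature, but not a proof of it; the paper's own ``proof'' --- a citation of \cite{jgt} --- is the honest alternative.
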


We need  one more preliminary result.

\begin{lemma}  \label{minimal}  
Let $G$ be a finite group that is not solvable 
but has the property that every proper subgroup is solvable.  
Then the solvable radical $R(G)$ of $G$ is $\Phi(G)$, the
Frattini subgroup of $G$; in particular, $R(G)$ is nilpotent. 
Moreover, $G/R(G)$ is a non-abelian simple group with
all proper subgroups being solvable, and every prime divisor of 
$|R(G)|$ divides $|G/R(G)|$. 
\end{lemma}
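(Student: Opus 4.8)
The plan is to exploit the hypothesis that every proper subgroup of $G$ is solvable, which forces a very rigid structure on $G$. First I would show $R(G) \leq \Phi(G)$. Suppose not; then $R(G)$ is not contained in some maximal subgroup $M$ of $G$, so $R(G)M = G$. Since $M$ is proper it is solvable by hypothesis, and $R(G)$ is solvable by definition, so $G = R(G)M$ would be a product of two solvable subgroups with $R(G) \lhd G$; this makes $G/R(G) \cong M/(M \cap R(G))$ solvable, hence $G$ solvable, a contradiction. Therefore $R(G) \leq \Phi(G)$. For the reverse inclusion, $\Phi(G)$ is nilpotent (a standard fact) and hence solvable, and being normal it lies in $R(G)$. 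Thus $R(G) = \Phi(G)$, and in particular $R(G)$ is nilpotent.

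Next I would analyze the quotient $\bar{G} := G/R(G)$. It is nontrivial because $G$ is not solvable while $R(G)$ is. Every proper subgroup of $\bar{G}$ pulls back to a proper subgroup of $G$ containing $R(G)$, which is solvable by hypothesis, so its image is solvable; hence every proper subgroup of $\bar{G}$ is solvable. Moreover $R(\bar{G}) = 1$: if $\bar{G}$ had a nontrivial solvable normal subgroup, its preimage in $G$ would be a normal subgroup with solvable quotient over the solvable $R(G)$, hence solvable, contradicting the maximality of $R(G)$ as the solvable radical. A group with trivial solvable radical in which every proper subgroup is solvable must itself be simple and non-abelian: any nontrivial proper normal subgroup would be solvable (being a proper subgroup) and hence would lie in $R(\bar{G}) = 1$. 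So $\bar{G} = G/R(G)$ is a non-abelian simple group all of whose proper subgroups are solvable (thus one of the groups listed in Theorem \ref{simple}).

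Finally I would establish the divisibility claim: every prime dividing $|R(G)|$ divides $|G/R(G)|$. Here is where I expect the main subtlety, and I would argue via a Frattini-type coprimality obstruction. Since $R(G) = \Phi(G)$ is nilpotent, write $R(G) = \prod_\ell R_\ell$ as the product of its Sylow subgroups, each characteristic in $R(G)$ and hence normal in $G$. Suppose some prime $p$ divides $|R(G)|$ but not $|G/R(G)|$; then a Sylow $p$-subgroup $P = R_p$ of $R(G)$ is in fact a full Sylow $p$-subgroup of $G$ and is normal in $G$. As a normal $p$-Sylow contained in $\Phi(G)$, the Schur–Zassenhaus theorem applied to the coprime situation provides a complement to $P$ in $G$ (or one argues that $G = N_G(Q)P$ for a complementary Hall subgroup and invokes the Frattini argument), yielding a proper subgroup $H$ with $HP = G$ and $H \neq G$; but $P \leq \Phi(G) \leq H$ forces $H = G$, a contradiction. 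This shows no such prime $p$ exists, completing the proof. The delicate point is ensuring the complement genuinely gives a proper subgroup meeting $\Phi(G)$ nontrivially, which is exactly where the Frattini property of $R(G)$ is used to derive the contradiction.
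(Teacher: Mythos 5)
Your proof is correct, and its first two steps coincide with the paper's: you rule out $R(G)\not\le\Phi(G)$ by supplementing $R(G)$ with a maximal (hence solvable) subgroup, get the reverse inclusion from nilpotency of $\Phi(G)$, and deduce that $G/R(G)$ is non-abelian simple from the triviality of its solvable radical together with the solvability of all proper subgroups. Where you genuinely diverge is the divisibility claim. The paper also invokes Schur--Zassenhaus, but applies it in $G/O_{p'}(R)$ to the normal $p$-subgroup $R/O_{p'}(R)$: the preimage $H$ of a complement satisfies $H/O_{p'}(R)\cong G/R$ and $H\cap R=O_{p'}(R)<R$, so $H$ is a proper \emph{non-solvable} subgroup of $G$, contradicting the hypothesis once more. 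You instead complement the normal Sylow $p$-subgroup $P=O_p(R(G))$ inside $G$ itself and derive the contradiction from the non-generator property of the Frattini subgroup: $G=HP=H\Phi(G)$ with $H$ proper is impossible. Your route buys a little more generality: it shows that in \emph{any} finite group every prime divisor of $|\Phi(G)|$ divides $|G/\Phi(G)|$, with no use of minimal non-solvability, whereas the paper's contradiction leans on the standing hypothesis (and on the simplicity of $G/R$ already established). One phrasing slip in your final step: the chain $P\le\Phi(G)\le H$ is not available --- nothing puts $\Phi(G)$ inside the complement $H$, and indeed $\Phi(G)\le H$ together with $H\cap P=1$ would force $P=1$. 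The correct finish is either to note that a proper $H$ lies in some maximal subgroup $M$, that $\Phi(G)\le M$, and hence $G=HP\le M$, a contradiction; or equivalently to cite the non-generator property applied to $G=H\Phi(G)$. This is a one-line repair, not a gap.
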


\begin{proof}  Suppose that $R := R(G)$ is not contained in $\Phi(G)$.
Then $G=RM$ for some maximal subgroup $M$ of $G$.
Then $M$ is solvable, whence $G/R$ is solvable and so is $G$.
Of course, $\Phi(G)$ is nilpotent, so $R=\Phi(G)$ is nilpotent.
Now $G/R$ has no nontrivial solvable normal subgroups; also, by the hypothesis,
it has no proper non-solvable subgroups. Hence $G/R$ is simple non-abelian. Finally,
suppose that a prime divisor $p$ of $|R|$ is coprime to $|G/R|$.
Since $R$ is nilpotent, $O_p(R) \cong R/O_{p'}(R)$. By the Schur-Zassenhaus
Theorem, $R/O_{p'}(R)$ has a complement $H/O_{p'}(R)$ in $G/O_{p'}(R)$. In this case,
$H$ is a proper non-solvable subgroup of $G$, a contradiction. 
\end{proof}

We next prove  Theorem \ref{main2} in the case that $G$ is quasi-simple. 

\begin{lemma}\label{qs}  
{\rm (i)} Let $G = \SL_2(q)$ with $q \geq 4$, respectively 
$G = \Sz(q)$ with $q \geq 8$, and let $p$ be any odd prime divisor of $|G|$.  
Then there exist nontrivial elements 
$x_i \in G$, $1 \le i \le 3$, such that $x_1x_2x_3=1$, 
$x_1$ is a $2$-element, $x_2$ is a $p$-element, and $x_3$ is an $s$-element 
for some prime $s \neq 2,p$.

{\rm (ii)} Let $G$ be a finite quasi-simple group with all proper
subgroups being solvable.
Then there exist nontrivial elements
$x_i \in G$, $1 \le i \le 3$, such that $x_1x_2x_3=1$, 
$x_1$ is a $2$-element, $x_2$ and $x_3$ are $p_i$-elements, where $p_2 < p_3$ 
are odd primes and $p_2 \in \{3,5\}$.
\end{lemma}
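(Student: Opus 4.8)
The plan is to prove both parts by reducing to explicit constructions inside the very short list of relevant groups, exploiting the order formulas and subgroup structure. For part (i), I would work directly with $G = \SL_2(q)$ and $G = \Sz(q)$, where the conjugacy classes and element orders are completely understood. The key observation is that these groups are (essentially) generated by elements whose orders divide the three ``natural'' torus orders, so I expect to find a suitable multiplicative triple by a dimension/structure-constant count. Concretely, in $\SL_2(q)$ the element orders are controlled by the split torus (order $q-1$), the nonsplit torus (order $q+1$), and the unipotent radical (order $p$, the defining characteristic); I would pick $x_1$ a $2$-element, $x_2$ a $p$-element for the given odd prime $p$, and $x_3$ an $s$-element for a suitable third prime $s$, arranging $x_1 x_2 x_3 = 1$. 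The cleanest route is to invoke a structure-constant (class-algebra) computation showing the relevant class product contains the identity, i.e. that the number of solutions $(x_1,x_2,x_3)$ with $x_1x_2x_3=1$ in the three prescribed classes is positive; for $\SL_2$ and $\Sz$ this count is governed by the character table, which is classical and small.

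The main obstacle is bookkeeping the prime divisors: given an \emph{arbitrary} odd prime $p \mid |G|$, I must exhibit a third prime $s \neq 2,p$ and verify that $2$, $p$, and $s$ all actually occur as orders of nontrivial elements realizable in a product equal to $1$. For $\SL_2(q)$, $|G| = q(q-1)(q+1)$, so $2$ always divides $|G|$ (as $q\ge 4$), and one checks that the odd part of $|G|$ has at least two distinct odd prime divisors unless $q$ is very small, which I would handle as finitely many explicit exceptions. The same applies to $\Sz(q)$ with $|G| = q^2(q-1)(q^2+1)$ and $q = 2^{2m+1}\ge 8$: here $q-1$ and $q^2+1$ supply the needed odd primes, and Sylow theory plus the known torus structure let me place $p$ in one torus and $s$ in another. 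I would then cite the standard fact that in these rank-one groups a product of three elements, one from each relevant torus class, can be made trivial whenever the classes are ``large enough,'' which the structure constants confirm.

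For part (ii), I would first apply Lemma \ref{minimal} and Theorem \ref{simple}: a finite quasi-simple group $G$ with all proper subgroups solvable has $G/Z(G)$ on Thompson's list, namely $\PSL_2(p)$ ($p\equiv\pm2\bmod 5$), $\PSL_2(2^p)$, $\PSL_2(3^p)$, $\Sz(2^p)$, or $\SL_3(3)$. Since $G$ is quasi-simple and these simple groups have small Schur multipliers, $G$ is the simple group itself or one of finitely many covers; I would reduce to the simple case, or to the covers $\SL_2(q)$ that are already handled by part (i). The goal now is sharper: I need $x_1$ a $2$-element and $x_2, x_3$ to be $p_2$- and $p_3$-elements with $p_2 \in \{3,5\}$ and $p_2 < p_3$ both odd. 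The substance is checking that each group on the list has $3$ or $5$ dividing its order and admits a second, larger odd prime divisor $p_3$, together with a genuine multiplicative triple; for most of these groups $3 \mid |G|$ and a further odd prime is available from the torus orders, so I would invoke part (i) with $p = 3$ (or $p=5$ when $3\nmid|G|$, which forces $5\mid|G|$ on this list) to produce the triple, then relabel so that the two odd primes are correctly ordered.

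The hard part, I expect, will be the uniform verification across the $\PSL_2$ families that the prescribed small prime $p_2\in\{3,5\}$ and a strictly larger odd prime $p_3$ simultaneously occur as element orders \emph{within a single relation} $x_1x_2x_3=1$ with generation (or at least nontriviality) controlled; the $\Sz(2^p)$ case is delicate because $3 \nmid |\Sz(q)|$, so I must use $p_2 = 5$ there and confirm $5 \mid q^2+1$ via the congruence $2^{2m+1}\pmod 5$, and then locate a larger odd prime $p_3$ dividing $|\Sz(q)|$. These are finitely many congruence and divisibility checks per family, and I would organize them by reducing everything to the structure-constant positivity from part (i), treating $\SL_3(3)$ and the smallest members of each family as explicit special cases handled directly from their character tables.
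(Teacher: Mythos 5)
Your proposal is correct and follows essentially the same route as the paper: part (i) is proved there by exactly the structure-constant/character-table computation you describe, placing the $2$-, $p$- and $s$-elements in suitable tori (or unipotent classes) of $\SL_2(q)$ and $\Sz(q)$ so that only the principal character survives in the sum (except in the one forced case $q \equiv \pm 1 \pmod{4p}$, where the $2$-element and the $p$-element must share the torus $C_{q-\eps}$ and the Steinberg character also contributes, giving $1 \pm 1/q > 0$), and part (ii) is reduced via Thompson's list to part (i) with $p_2 = 3$ (or $p_2 = 5$ for the Suzuki groups), with $\SL_3(3)$ and $2^2\cdot\Sz(8)$ checked directly from character tables. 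One caution on your reduction in (ii): the paper reduces to the Schur cover of $G/Z(G)$ and then projects triples of noncentral elements down to every quasi-simple quotient; proving the statement only for the simple group $G/Z(G)$ would not suffice, since triples need not lift through central extensions (that lifting obstruction is the theme of the whole paper), so your case division must genuinely treat each cover, as you partly indicate, rather than deduce the covers from the simple case.
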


\begin{proof}  
(i) Recall that the number of 
triples $(y_1,y_2,y_3) \in G \times G \times G$ with $y_1y_2y_3 = 1$ and 
$y_i \in x_i^G$ is equal to 
\begin{equation}\label{count}
  \frac{|x_1^G| \cdot |x_2^G| \cdot |x_3^G|}{|G|^2} \cdot 
    \sum_{\chi \in \Irr(G)}\frac{\chi(x_1)\chi(x_2)\chi(x_3)}{\chi(1)}.
\end{equation} 
The character tables for $G = \SL(q)$ and $\Sz(q)$ are well known
(and are for example in {\sf Chevie} \cite{Ch}). 
First we consider the case $G = \Sz(q)$ with $q \geq 8$. Then choose 
$x_1$ of order $2$, $x_2$ of order $p$, and $x_3$ of some prime order $s$, where 
$s|(q-1)$ if $p|(q^2+1)$, and $s|(q^2+1)$ otherwise. 

Next suppose that $G = \SL_2(q)$ with $q = r^f \geq 4$, $r$ a prime.
 
$\bullet$ Assume $r= 2$ and let $\eps = \pm 1$ be chosen such that
$p|(q-\eps)$. Then we can choose $x_1$ of order $2$, $x_2$ of order $p$, and 
$x_3$ of some prime order $s$ dividing $q+\eps$. 

$\bullet$ Next assume that $p = r$. 
Choose $\eps = \pm 1$ such that $q \equiv \eps (\mod 4)$; in particular, 
$(q+\eps)/2$ has a prime divisor $s \neq 2,p$. Now we can choose 
$x_1$ of order $4$ (in a maximal torus $C_{q-\eps}$), $x_2$ of order $p$, and $x_3$ of 
order $s$ (in a maximal torus $C_{q+\eps}$). 

$\bullet$ Now let $r \neq 2,p$ but $q \not\equiv \pm 1 (\mod 4p)$. Then 
there is some $\eps = \pm 1$ such that $4|(q-\eps)$ and $p|(q+\eps)$, and   
we choose $x_1 \in C_{q-\eps}$ of order $4$, $x_2 \in C_{q+\eps}$ of order $p$, and 
$x_3$ of order $s = r$. 

In all the above cases, the principal character $1_G$ of $G$ is the only irreducible character 
that is nonzero at $x_1$, $x_2$, and $x_3$ altogether, and so we are done by (\ref{count}).

\smallskip
Suppose now that $G = \SL_2(q)$ with $q = r^f \geq 5$, 
$r \neq 2,p$ and $q \equiv \eps (\mod 4p)$ for some 
$\eps = \pm 1$. Choose $x_1 \in C_{q-\eps}$ of order $4$, $x_2 \in C_{q-\eps}$ of order 
$p$, and $x_3$ of some prime order $s$ dividing $(q+\eps)/2$ (so $s \neq 2,p$). 
Then there are 
precisely two irreducible characters of $G$ which are nonzero at all $x_i$: the 
principal character $1_G$, and the Steinberg characters $\St$ of degree 
$q$, with $|\chi(x_1)\chi(x_2)\chi(x_3)| = 1$. Hence we are done in this case as well.  

\medskip
(ii) It suffices to prove the statement for $G$, the Schur cover of $G/Z(G)$.
Hence we may assume that
$G = \SL_2(q)$, $\SL_3(3)$, $\Sz(q)$, or $2^2 \cdot \Sz(8)$ by Theorem 
\ref{simple}. We will choose $p = 3$ in the first two cases, and $p = 5$ in the 
last two cases. If $G = \SL_2(q)$ or $\Sz(q)$, then we are done by (i).
For $G = \SL_3(3)$, we can choose $x_1$ of order $2$, $x_2$ of order $3$, and 
$x_3$ of order $13$, and again $1_G$ is the only irreducible character that is 
nonzero at $x_1$, $x_2$, and $x_3$. For $G = 2^2 \cdot \Sz(8)$, we can choose 
$x_1$ a noncentral $2$-element, $x_2$ of order $5$, and 
$x_3$ of prime order $13$, and check our statement by using (\ref{count}) and 
\cite{Atlas}.
\end{proof}  

We can now prove a slightly stronger version of Theorem \ref{main2}. 

\begin{theorem} \label{main3}  
Let $G$ be a finite non-solvable group.
There exist nontrivial elements $x_i \in G$, $1 \le i \le 3$, such that $x_1x_2x_3=1$, 
$x_1$ is a $2$-element, $x_2$ and $x_3$ are $p_i$-elements, where 
$p_2 < p_3$ are odd primes and $p_2 \in \{3,5\}$.
\end{theorem}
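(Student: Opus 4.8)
The plan is to reduce the general non-solvable group $G$ to a minimal non-solvable situation and then invoke the quasi-simple case already handled in Lemma \ref{qs}. First I would pass to a subgroup: since $G$ is not solvable, it contains a subgroup $H$ that is minimal with respect to being non-solvable, i.e.\ $H$ is not solvable but every proper subgroup of $H$ is solvable. It clearly suffices to produce the desired triple $x_1,x_2,x_3$ inside $H$, since a $2$-element (resp.\ $p_2$-, $p_3$-element) of $H$ is such an element of $G$. So from now on I replace $G$ by $H$ and assume every proper subgroup of $G$ is solvable.

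Now I would apply Lemma \ref{minimal} to this minimal non-solvable $G$: it tells me that $R(G)=\Phi(G)$ is nilpotent, that $S:=G/R(G)$ is a non-abelian simple group all of whose proper subgroups are solvable (hence one of the groups in Thompson's list, Theorem \ref{simple}), and crucially that every prime dividing $|R(G)|$ also divides $|S|$. The target triple lives most naturally in a quasi-simple group, so the key step is to relate $G$ to the Schur cover of $S$. I would consider the preimage structure: $G$ is a Frattini cover of $S$ with kernel $F=R(G)=\Phi(G)$, so I am exactly in the setting where the lifting machinery of Section \ref{lifting} applies.

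The heart of the argument is then to transport the triple constructed in the quasi-simple case. By Lemma \ref{qs}(ii), the Schur cover $\widehat{S}$ of $S$ admits nontrivial elements $\hat{x}_1,\hat{x}_2,\hat{x}_3$ with $\hat{x}_1\hat{x}_2\hat{x}_3=1$, with $\hat{x}_1$ a $2$-element, $\hat{x}_2$ and $\hat{x}_3$ being $p_i$-elements for odd primes $p_2<p_3$ and $p_2\in\{3,5\}$. These project to elements $g_1,g_2,g_3$ in $S$ with $g_1g_2g_3=1$ of the same respective prime-power orders, and one checks (using the explicit constructions in Lemma \ref{qs}) that $\langle g_1,g_2,g_3\rangle=S$. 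The plan is to lift these $g_i$ back up through the Frattini cover $G\to S$ using Theorem \ref{main1gen} (or Theorem \ref{main1}): since $F$ has order divisible only by primes dividing $|S|$, I must first arrange that the orders of the $g_i$ are coprime to $|F|$. Here the freedom to choose $p_2\in\{3,5\}$ and the flexibility in the odd primes $p_2,p_3$ from Lemma \ref{qs}(i) is what lets me dodge the primes dividing $|F|$; this is the step I expect to be the main obstacle, since I must confirm that for each group in Thompson's list one can simultaneously hit a $2$-element together with a $p_2$- and a $p_3$-element whose orders avoid the relevant prime divisors of $F$.

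Once the orders of $g_1,g_2,g_3$ are coprime to $|F|$ and the $g_i$ generate $S$, Theorem \ref{main1gen} guarantees lifts $x_i\in G$ with $x_iF=g_i$, $|x_i|=|g_i|$, and $x_1x_2x_3\in fJ$ for the appropriate coset; taking $f=1$ in the relevant coset (or adjusting by the freedom in the lifting invariant) yields $x_1x_2x_3=1$ with $x_1$ a $2$-element and $x_2,x_3$ the required odd prime-power elements. Since each $x_i$ has the same order as $g_i$, which is nontrivial, the $x_i$ are nontrivial, completing the reverse implication. I would close by noting that the $2$-power part is automatic because $2\mid|S|$ for every group in Thompson's list and $2$ never lies among the troublesome odd primes dividing $|F|$, so the $2$-element $x_1$ lifts without obstruction.
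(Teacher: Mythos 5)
Your outer reduction (passing to a minimal non-solvable subgroup, Lemma \ref{minimal}, Thompson's list, Lemma \ref{qs}) matches the paper, but your core lifting step is not the paper's and it has a genuine gap: Theorem \ref{main1gen} requires the orders of $g_1,g_2,g_3$ to be coprime to $|F|$, and this cannot always be arranged. Lemma \ref{minimal} works \emph{against} you here: every prime dividing $|F|$ divides $|S|$, while the theorem forces $p_2\in\{3,5\}$. Concretely, take $S=\PSL_2(7)$ (or $S=\AAA_5$) and let $G$ be a finite quotient, with nontrivial kernel $F$, of the universal $3$-Frattini cover of $S$ (such quotients exist since $3$ divides $|S|$, cf. \cite{fried}); then $F$ is a nontrivial $3$-group. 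The only odd primes dividing $|S|$ are $3$ and $7$ (resp.\ $3$ and $5$), so every admissible triple in $S$ is a $(2,3,7)$-triple (resp.\ a $(2,3,5)$-triple), and $3$ divides $|F|$: there is no ``flexibility in the odd primes'' to dodge $|F|$, and Theorem \ref{main1gen} simply does not apply. The paper avoids this entirely by never lifting through all of $F$ at once: it quotients by a single minimal noncentral normal subgroup $N\le\Phi(G)$, which is an abelian $p$-group for one odd prime $p$ by Proposition \ref{split}(ii), and lifts with Lemma \ref{abelian}, whose hypothesis is the alternative $G=O^p(G)$ --- automatic because the minimal counterexample is perfect --- so no coprimality between the element orders and $|N|$ is ever needed.

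There is a second gap: even where Theorem \ref{main1gen} applies, it only places $X_1X_2X_3$ in \emph{some} coset $fJ$ of $J=[F,G]$; your phrase ``taking $f=1$ in the relevant coset (or adjusting by the freedom in the lifting invariant)'' is not an argument, since the coset is determined by the chosen classes and the extension, not free, and Example \ref{a7} shows the invariant can genuinely be nontrivial. (In the present setting this could be repaired: after reducing to $O_2(G)=1$ --- a reduction you also need to make $|F|$ odd, and which you only gesture at in your final sentence --- the group $F/J$ is an odd-order subgroup of a quotient of the Schur multiplier of $S$, and those multipliers are $2$-groups for every group in Thompson's list, forcing $F=J$; but this has to be said.) Relatedly, the triple from Lemma \ref{qs}(ii) in the Schur cover $\widehat S$ need not project to elements of the same orders in $S$ (the noncentral $2$-element of $2^2\cdot\Sz(8)$ is an example), so it cannot directly certify triviality of the lifting invariant. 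The paper's route --- handle the case $\Phi(G)\le Z(G)$ as the quasisimple case via Lemma \ref{qs}(ii), and otherwise induct one abelian odd $N$ at a time via Lemma \ref{abelian}, with generation supplied by Thompson's theorem and the Frattini property --- is what makes both problems disappear.
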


\begin{proof}   Suppose the result is false.  Let $G$ be a minimal counterexample.
Thus, every proper subgroup of $G$ is solvable, and so $G$ is perfect and 
$G/\Phi(G)$ is non-abelian simple by Lemma \ref{minimal}.   

\smallskip
1) We first claim that $O_2(G)=1$.  If not, then the result holds for $G/O_2(G)$.
Thus, we can choose nontrivial  $x_i, 1 \le i \le 3$, where $x_i$ are $p_i$-elements 
with $p_1=2$, $p_2 <  p_3$ odd primes, $p_2 \in \{3,5\}$, and 
$x_1 x_2 x_3  = y \in O_2(G)$.
Then  $(y^{-1}x_1)x_2x_3=1$ and $y^{-1}x_1$ is a $2$-element (and is nontrivial
since $x_2$ and $x_3$ are). This proves the claim.

\smallskip
2) By Lemma \ref{qs}(ii), $\Phi(G)> Z(G)$. Let $N$ be a subgroup
of $\Phi(G)$ that is normal in $G$ but is not central.  Moreover, take
$N$ to be a minimal such subgroup. Since $N \leq \Phi(G)$ is nilpotent,  
the minimality of $N$ implies 
that $N$ is a $p$-group. Observe that $p > 2$ as $O_2(G) = 1$.  
By Proposition \ref{split}(ii), $N$ is abelian. Choose $x_i \in G$, $1 \le i \le 3$
with the $x_i$ nontrivial $p_i$-elements, $p_1 =2$,
$p_2 < p_3$ odd primes, $p_2 \in \{3,5\}$, and $x_1x_2x_3 = n \in N$ (this 
is possible since $G/N$ satisfies the theorem). Since 
every proper subgroup of $G$ is solvable, $G/N = \langle x_1N, x_2N,x_3N \rangle$
by Thompson's theorem \cite{jgt}. Furthermore, 
since $N \le \Phi(G)$, we see that $G = \langle x_1, x_2,x_3 \rangle$.
Recall that $G$ is perfect, and so $G = O^p(G)$. Hence by Lemma \ref{abelian},
there are conjugates $y_i$ of $x_i$ for $i = 1,2,3$ such that 
$y_1y_2y_3 = (x_1x_2x_3)n^{-1} = 1$,
a contradiction to the fact that $G$ was a counterexample. 
\end{proof}

The examples of $\Sz(8)$ and $\SL_3(2)$ show that Theorem \ref{main3}
is best possible, in the sense that one cannot always demand one of the primes 
to be $3$, respectively $5$. In \S\S4, 5 we will address possible refinements of 
Theorem \ref{main3} in some other directions.
  
\section{Prime Order Elements}

It is quite easy to see that Theorems \ref{main2} and  \ref{main3}  fail if we 
insist that the elements have 
prime order. For example, if $G \cong \SL_2(q)$ with $q$ odd and has order 
divisible by only $3$ primes, then since every involution in $G$ is central,  
no product of $3$ elements of distinct prime order (with one of the
primes equal to $2$) can be trivial (otherwise 
we would have two elements of prime order in $G/Z(G)$ with product being trivial).
For $q=5$ this was observed (with a more complicated proof) in \cite{barry}.   

More generally, we point out the following:

\begin{theorem}  \label{prime}  Let $G$ be a finite group. Then there exists a 
finite group $X$ with $X/F \cong G$ such that $F$ is abelian, 
$F \le \Phi(X)$, and $F$ contains all elements of prime order in $X$.
\end{theorem}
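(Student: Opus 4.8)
The plan is to build $X$ as an iterated chain of Frattini covers, dealing with one prime $p\mid |G|$ at a time. The key reduction is the following one-prime statement: for any finite group $H$ and prime $p$ there is a Frattini cover $1 \to A \to \hat H \to H \to 1$ with $A$ an (elementary) abelian $p$-group, $A \le \Phi(\hat H)$, and every element of order $p$ in $\hat H$ lying in $A$. Granting this, I would set $X_0 = G$ and, running over the primes $p_1,\dots,p_k$ dividing $|G|$, let $X_i \to X_{i-1}$ be the one-prime cover for $p_i$, with kernel $A_i$. Because the kernel of each later cover is coprime to the earlier primes, an element of $X=X_k$ of order $p_j$ maps to an element of order $p_j$ in $X_j$, hence lies in $\Ker(X_j\to X_{j-1})=A_j$ and so maps to $1$ in $G$; thus every prime-order element of $X$ lies in $F:=\Ker(X\to G)$. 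A composite of Frattini covers is a Frattini cover (using $\Phi(E/A)=\Phi(E)/A$ when $A\le \Phi(E)$), so $F\le \Phi(X)$. Finally $F$ is abelian for free: $\Phi(X)$ is nilpotent, hence so is $F$, and $F$ is then the direct product of its Sylow subgroups, each of which is one of the abelian groups $A_i$.

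The heart is the one-prime statement, which I would prove cohomologically. For an extension $1\to A\to E\to H\to 1$ with $A$ abelian realizing the given action, some lift of an order-$p$ element $h\in H$ has order $p$ exactly when the restriction of the extension class $\omega\in H^2(H,A)$ to $\langle h\rangle$ vanishes, since the $p$-th powers of the lifts of $h$ form a coset of $\mathrm{Im}(N_h)$ representing $\mathrm{Res}_{\langle h\rangle}\omega\in H^2(\langle h\rangle,A)$. So it suffices to produce a module $A$ and a class $\omega$ restricting nontrivially to $\langle h\rangle$ for \emph{every} order-$p$ element $h$. I would take $A=\bigoplus_j A_j$ with $A_j=\Hom_{\FF_p[\langle h_j\rangle]}(\FF_p[H],\FF_p)$ coinduced from $\langle h_j\rangle$, where $h_j$ runs over representatives of the conjugacy classes of elements of order $p$. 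By Shapiro's lemma $H^2(H,A_j)\cong H^2(\langle h_j\rangle,\FF_p)=\FF_p$, and because this isomorphism is restriction followed by the counit, the class $\omega_j$ corresponding to a generator (the nonsplit $C_{p^2}$-class) already satisfies $\mathrm{Res}_{\langle h_j\rangle}\omega_j\ne 0$; conjugation-invariance of a global class gives nonvanishing on every conjugate of $\langle h_j\rangle$. Setting $\omega=(\omega_j)_j$, every order-$p$ element $h$ is conjugate to some $h_j$, so $\mathrm{Res}_{\langle h\rangle}\omega\ne 0$. Hence in the associated extension $E$ every lift of an order-$p$ element of $H$ has order $p^2$, so every element of order $p$ in $E$ lies in $A$.

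It then remains to upgrade this $E$ to a Frattini cover. Rather than modify the module, I would pass to a subgroup: let $\hat H\le E$ be minimal subject to surjecting onto $H$, and put $A^*=\hat H\cap A$. Minimality forces $A^*\le\Phi(\hat H)$, since a maximal subgroup of $\hat H$ missing $A^*$ would already surject onto $H$; and every order-$p$ element of $\hat H$ still lies in $A^*$, because its image in $E$ is one of the order-$p$ elements of $E$, all of which lie in $A$. This produces the required one-prime Frattini cover.

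The main obstacle is exactly the uniformity demanded in the one-prime step: a \emph{single} class must restrict nontrivially to each of the possibly many, pairwise non-conjugate cyclic subgroups of order $p$, so that no order-$p$ element can survive outside the kernel. The coinduction-plus-Shapiro device is what secures this simultaneously, and once such a class is available the remaining requirements are comparatively soft — the Frattini property via the minimal-subgroup trick, and the abelianness of $F$ via the nilpotence of $\Phi(X)$.
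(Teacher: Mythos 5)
Your proof is correct, and its cohomological core is the same as the paper's: an induced (equivalently, coinduced) module from each cyclic subgroup of prime order, Shapiro's lemma to produce a class restricting nontrivially to that subgroup, conjugation-invariance to cover whole conjugacy classes, and finally passage to a subgroup minimal with respect to surjecting onto the base group to secure the Frattini property --- the paper's parenthetical ``just take the induced module'' is exactly your Shapiro step, and its final paragraph is your minimal-subgroup trick. Where you genuinely differ is the global organization. You work one prime at a time and compose a tower of Frattini covers, which obliges you to verify three further facts: that a composite of Frattini covers is a Frattini cover (via $\Phi(E/A)=\Phi(E)/A$ when $A\le\Phi(E)$), that coprimality of the later kernels lets an element of order $p_j$ in $X$ be tracked into $A_j$ and hence into $F$, and that $F$ is abelian because it is nilpotent with each Sylow subgroup isomorphic to one of the $A_i$. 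All three of these auxiliary steps are correct as you state them. The paper avoids the tower altogether: it forms the direct sum $W=\bigoplus_g W_g$ over all prime-order elements $g$ of $G$ at once, with the summands living over the various different primes $\FF_p$; the class $\beta=(\beta_g)_g$ then restricts nontrivially to every prime-order subgroup simultaneously (only the component matching the given subgroup matters), so a single extension $1\to W\to E\to G\to 1$ traps all prime-order elements of $E$ inside $W$, and one application of the minimal-subgroup trick produces $X$ with $F=X\cap W$, which is abelian for free because $W$ is --- no nilpotence or order-tracking needed. So your argument is correct but strictly longer; the only thing the prime-by-prime iteration buys is that each kernel is a homogeneous elementary abelian $p_i$-group, which the theorem does not require.
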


\begin{proof}  
Let $g \in G$ be of prime order $p$, and let $D$ be a one-dimensional
trivial $\langle g \rangle$-module over $\FF_p$.   
Since $\dim H^2(\langle g \rangle, D) =1$,
there exists a finite $\FF_pG$-module $W_g$ with an element 
$\beta_g \in H^2(G, W_g)$ 
such that the restriction of $\beta_g$ to $\langle g \rangle$ 
is nonzero in $H^2(\langle g \rangle, W_g)$.
(Just take the induced module for example).   
By taking direct sums, we see that 
there exists a finite $G$-module $W$ and an element $\beta \in H^2(G,W)$ such that
the restriction of $\beta$ to each subgroup of prime order in $G$  is nonzero. 
This allows us to construct an exact sequence
$$1  \rightarrow   W  \rightarrow E \rightarrow G \rightarrow 1,$$
where $W$ contains all elements of prime order in $E$  (since
$\beta|_C$ is nonzero in $H^2(C,W)$ for every subgroup of prime order $C$ of $G$).    

Now choose $X$ to be a minimal subgroup of
$E$ that surjects onto $G$.    Clearly $F := X \cap W$ is abelian and contains
all elements of prime order in $X$.   It only remains to show that 
$F \le \Phi(X)$. If not, then we could choose a proper subgroup $X_0$ of $X$ with 
$X = X_0F$ and so $X_0$ surjects onto $G$, contradicting the choice of $X$.
\end{proof}

In particular, given any non-solvable $G$, the subgroup generated by all elements 
of prime order in the extension $X$ specified in Theorem \ref{prime} is abelian, 
and so any product of elements in $X$ of distinct prime orders $p_i$ has order 
equal to the product of the primes.  

We close by noting that this example (especially for the prime $2$) has a nice
consequence for the inverse Galois problem.   As far as we know, this was first
observed by Serre.

\begin{cor}  
If every finite group occurs as a Galois group of a Galois extension $\KK$
of $\QQ$, then every finite group occurs as a Galois group over a totally real
Galois extension $\KK'$ of $\QQ$.
\end{cor}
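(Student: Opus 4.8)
The plan is to use Theorem \ref{prime} to convert a realization of a suitable Frattini cover of $G$ into a totally real realization of $G$ itself, by arranging that complex conjugation --- which is an involution --- is swallowed by the covering kernel and hence acts trivially on the field realizing $G$. So assume the inverse Galois problem holds over $\QQ$ (the hypothesis) and let $G$ be an arbitrary finite group.

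First I would apply Theorem \ref{prime} to obtain a finite group $X$ with $X/F \cong G$, where $F$ is abelian, $F \le \Phi(X)$, and $F$ contains every element of prime order of $X$; in particular $F$ contains every involution of $X$. By hypothesis $X$ is a Galois group, say $X \cong \mathrm{Gal}(L/\QQ)$ for some Galois extension $L/\QQ$. Fix an embedding $L \hookrightarrow \CC$. Since $L/\QQ$ is Galois, $L$ is stable under complex conjugation, so the restriction of complex conjugation to $L$ is a well-defined element $c \in \mathrm{Gal}(L/\QQ) \cong X$ with $c^2 = 1$.

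Now set $\KK' := L^F$, the fixed field of $F$. As $F$ is normal in $X$, the extension $\KK'/\QQ$ is Galois with $\mathrm{Gal}(\KK'/\QQ) \cong X/F \cong G$, and the complex conjugation induced on $\KK'$ (via the restricted embedding $\KK' \hookrightarrow \CC$) is precisely the image of $c$ under the quotient map $X \to X/F$. Since $c$ is either trivial or an involution, in either case $c \in F$ by the defining property of $F$, so its image in $G \cong X/F$ is trivial. Hence complex conjugation fixes $\KK'$ pointwise, the chosen embedding maps $\KK'$ into $\mathbb{R}$, and --- because $\KK'/\QQ$ is Galois and all of its embeddings share a common image --- $\KK'$ is totally real. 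Thus $G \cong \mathrm{Gal}(\KK'/\QQ)$ with $\KK'$ totally real, as required.

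There is no serious analytic or arithmetic difficulty here; the entire weight of the argument rests on Theorem \ref{prime}. The single delicate point is that $F$ must contain \emph{every} involution of $X$, not merely some specific one: complex conjugation can be any involution of the realized group $X$, and we must ensure it collapses in the quotient $G$ regardless of which involution it happens to be. This is exactly what the ``contains all elements of prime order'' clause of Theorem \ref{prime} guarantees. Given that, the remainder is the standard correspondence between totally real Galois extensions of $\QQ$ and the triviality of complex conjugation inside the Galois group.
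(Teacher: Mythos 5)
Your proof is correct and follows essentially the same route as the paper's: apply Theorem \ref{prime} to get a Frattini cover $X$ whose kernel $F$ contains all involutions, realize $X$ as $\mathrm{Gal}(L/\QQ)$, pass to the fixed field $L^F$, and observe that complex conjugation lies in $F$ (or is trivial), so the fixed field is real and, being Galois over $\QQ$, totally real. The only cosmetic difference is that you explicitly handle the case where complex conjugation is trivial, which the paper leaves implicit.
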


\begin{proof}  
By Theorem \ref{prime}, there is a finite group $X$ such that $G = X/F$ and $F$ 
contains all involutions of $X$. Assume that $X = \mathrm{Gal}(\KK/\QQ)$
for some Galois extension $\KK$ of $\QQ$. Then $G =\mathrm{Gal}(\KK^F/\QQ)$.   
It remains to show that $\KK^F$ is totally real. Since $\KK^F/\QQ$ is Galois, it 
suffices to show that $\KK^F$ is real. Note that the complex conjugation
$\sigma$ acts on $\KK$.
Thus, $\sigma$ is an involution in $X$ and so contained  in $F$. Hence $\sigma$ is
trivial on $\KK^F$, whence $\KK^F$ is real.
\end{proof}  

\section{Finite Groups with $(2,p,q)$-triples}  \label{sec:psolvable}

First we recall a result essentially proved by Gow in \cite{Gow}:

\begin{lemma}\label{regular}
Let $G$ be a quasisimple Lie-type group of simply connected type, and let 
$x, y \in G$ be any two regular semisimple elements. Then $x^G \cdot y^G$ contains 
every noncentral semisimple element of $G$.
\end{lemma}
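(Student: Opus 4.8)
The plan is to convert the statement into the nonvanishing of a character sum and then to exploit the rigid behaviour of regular semisimple elements under Deligne--Lusztig theory. Applying formula~(\ref{count}) to the triple $(x,y,z^{-1})$, the number of pairs $(x',y') \in x^G \times y^G$ with $x'y' \in z^G$ equals
\[
\frac{|x^G|\,|y^G|\,|z^G|}{|G|^2} \sum_{\chi \in \Irr(G)} \frac{\chi(x)\,\chi(y)\,\chi(z^{-1})}{\chi(1)},
\]
and, since $x^Gy^G$ is a union of classes, $z \in x^G y^G$ precisely when this count is positive, i.e.\ when the displayed character sum is nonzero. At the outset I would record the two roles of the hypotheses: because $G$ is simply connected, the centralizer of every semisimple element is connected, so $C_G(x)$ and $C_G(y)$ are honest maximal tori $S$ and $T$ and the Deligne--Lusztig machinery applies cleanly; and the exclusion of central $z$ removes the only genuine obstruction (already in $\SL_2(q)$ a product of two regular semisimple classes need not contain $\pm 1$).

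The engine is the fact that for a regular semisimple $s$ with $C_G(s) = T_s$ the class function $\kappa_s := \sum_{\chi \in \Irr(G)} \overline{\chi(s)}\,\chi$, which equals $|T_s|$ on $s^G$ and vanishes off it, is \emph{uniform}: it lies in the span of the Deligne--Lusztig virtual characters $R_{T'}^{\theta'}$, and only tori $T'$ that are $G$-conjugate to $T_s$ contribute. Since $\langle f, \kappa_s\rangle = f(s)$ for every class function $f$, uniformity of $\kappa_s$ forces $\chi(s)$ to depend only on the uniform projection of $\chi$; and the Deligne--Lusztig character formula at a regular semisimple element gives $R_{T'}^{\theta'}(s)=0$ unless $T'$ is conjugate to $T_s$, and otherwise expresses $R_{T_s}^{\theta'}(s)$ as an explicit sum of values $\theta'(w\cdot s)$ over the relative Weyl group $N_G(T_s)/T_s$. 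Using this for $x$ and $y$, I would rewrite the sum over $\Irr(G)$ as a sum indexed by the characters $(S,\theta)$ and $(T,\phi)$ of the two tori, the pairings $\langle \chi, R_{T'}^{\theta'}\rangle$ reorganizing everything into contributions attached to $S$ and $T$ and to the Lusztig series meeting $z$.

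Positivity is the last and hardest step. The principal character contributes $+1$, and the unipotent and remaining Deligne--Lusztig contributions are controlled by the explicit values $R_S^\theta(x)$, $R_T^\phi(y)$ and $\chi(z)$; when $z$ is itself regular semisimple the same localization applies to the third factor and the sum becomes nearly diagonal, while for non-regular $z$ one descends to the connected reductive group $C_G(z)$ via Deligne--Lusztig restriction and argues there. The main obstacle is the sign bookkeeping: every $R_{T'}^{\theta'}$ carries a sign $\varepsilon_{T'}$, and one must prevent the numerous small terms from cancelling the leading $+1$. I expect to control this through the norm identity $\sum_{\chi} |\chi(x)|^2 = |C_G(x)| = |S|$, which shows that the values $\chi(x)$ are few and small relative to $|G|$, so that the combined error cannot overwhelm the main term. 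Verifying that such an estimate holds uniformly across all Lie types and ranks, with the customary separate attention to small rank and small $q$, is where the genuine effort is concentrated.
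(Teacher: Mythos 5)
Your setup is sound---the counting formula is the right starting point, and the Deligne--Lusztig facts you cite about regular semisimple elements (uniformity of the class indicator, vanishing of $R_{T'}^{\theta'}$ at $x$ unless $T'$ is conjugate to $C_G(x)$) are correct---but there is a genuine gap precisely at the step you defer, the nonvanishing of the character sum, and the method you propose for it cannot be made to work. The element $z$ ranges over \emph{all} noncentral semisimple elements, including those with enormous centralizers, and for such $z$ the tail of the sum is not small. Concretely, in $G = \SL_n(q)$ take $z = \diag(b,\ldots,b,b^{1-n})$ (a pseudoreflection up to scalar, exactly the kind of element the paper later feeds into this lemma in Example \ref{slp}); the unipotent character $\chi$ of degree $q(q^{n-1}-1)/(q-1)$ has $\chi(z) = (q^{n-1}-1)/(q-1)$, so $|\chi(z)|/\chi(1) \approx 1/q$, while your norm identity only gives $\sum_\chi |\chi(x)|^2 = |C_G(x)| \approx q^{n-1}$. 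Cauchy--Schwarz then bounds the tail by roughly $q^{n-2}$, which dwarfs the main term $1$ as soon as $n \geq 3$; and this is not an artifact of a weak bound, since the sum of the absolute values of the nontrivial terms really is of that order. The lemma is true not because the trivial character dominates but because of an integrality phenomenon that absolute-value estimates cannot detect.

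The paper's proof (following Gow) replaces your positivity estimate with a congruence modulo the defining prime $p$. One first observes that $C_G(P)$ contains no noncentral semisimple element for $P \in \Syl_p(G)$, so $p$ divides $|z^G|$ for every noncentral semisimple $z$. Since $G$ is simply connected, every irreducible character except the Steinberg character $\St$ lies in one of the $|Z(G)|$ blocks of maximal defect, and each such block contains a character of $p'$-degree; consequently the central character value $\omega_\chi(z) = \chi(z)\,|z^G|/\chi(1)$ is divisible by $p$ for every $\chi \neq \St$. Regular semisimplicity of $x$ and $y$ enters only through $|\St(x)| = |\St(y)| = 1$. Multiplying the character sum by $|z^G|$, every term dies modulo $p$ except the Steinberg term, which equals $\pm[G:C_G(z)]_{p'} \not\equiv 0 \pmod p$; hence the sum is nonzero and $z^{-1} \in x^G \cdot y^G$. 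This mod-$p$ mechanism is the missing idea: it is insensitive to the size of $C_G(z)$, which is exactly where all analytic approaches of the kind you sketch break down.
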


\begin{proof}
The proof is essentially given in \cite{Gow}. Let $p$ be the defining 
characteristic of $G$. Observe that $Z:= Z(G)$ is a $p'$-group,
so $PZ/Z \in \Syl_p(G/Z)$ when $P \in \Syl_p(G)$; also, 
$C_{G/Z}(PZ/Z)$ contains no nontrivial $p'$-element (cf. \cite{Gow}). Hence
$C_G(P)$ contains no noncentral semisimple element. It follows that if 
$g \in G \setminus Z(G)$ is semisimple, then $p$ divides $|g^G|$. Next, $G$ has 
exactly $s:=|Z|$ $p$-blocks of maximal defect $B_1, \ldots ,B_s$, and a single 
$p$-block of defect zero consisting of the Steinberg character $\St$. For any 
$i = 1, \ldots ,s$, there is an irreducible character $\chi_i \in B_i$ of 
$p'$-degree, and so the algebraic integer 
$\omega_{\chi_i}(g) := (\chi_i(g)/\chi_i(1))\cdot |g^G|$ is divisible by $p$. This implies
that $\omega_{\chi}(g) := (\chi(g)/\chi(1))\cdot |g^G|$ is divisible by $p$ for 
every irreducible character $\chi \in B_i$. On the other hand,
$|\St(x)| = |\St(y)| = 1$ since $x,y$ are regular semisimple. Thus 
$$\sum_{\chi \in \Irr(G)}\frac{\chi(x)\chi(y)\chi(g)}{\chi(1)} \cdot |g^G|
 \equiv \frac{\St(x)\St(y)\St(g)}{\St(1)} \cdot |g^G| 
 = \pm  [G:C_G(g)]_{p'} \not\equiv 0 (\mod p),$$
whence $g^{-1} \in x^G \cdot y^G$ by (\ref{count}).     
\end{proof}

\begin{example} \label{slp} 
{\em Let $p >  2$ be a prime, and $m > 1$ be coprime to $2p$.  
Assume moreover that $m$ does not divide $p-1$.   
Let $q$ be a prime with $mp|(q-1)$, and
let $G = SL_p(q)$. For brevity, we will outline the arguments only for $p > 7$. 

\smallskip
1) We choose three elements $x_i$ of $G$ as follows:

(a) $x_1$ is irreducible on a hyperplane (of the natural $\FF_qG$-module
$V = \FF_q^p$) and has an eigenvalue $1$;

(b) $x_2 = \diag(a ,b, \ldots, b)$ where $ab^{p-1}=1$ with $b \in \FF_q^{\times}$ 
of order $m$;
 
(c) $x_3$ is irreducible on a subspace of codimension $2$ of $V$, and has two  
eigenvalues, $1$ and $b^{-1}$, in $\FF_q$; furthermore it
has order coprime to $|x_1|$. 

Note that the orders of $x_i$, $i = 1,2,3$, are all coprime to $p$ and
$x_1$ and $x_3$ are regular semisimple elements.  It is easy to arrange
so that $|x_1|$ and $|x_3|$ are coprime (by taking $x_1$ to have
prime order $\ell$, a {\it primitive prime divisor} of $q^{p-1} -1$, cf. \cite{Zs}).

\smallskip
2) Observe that if $y_i$ is conjugate to $x_i$ with $y_1y_2y_3=1$, then
$\langle y_1, y_2 \rangle$ acts reducibly on $V$.  
This follows by Scott's Lemma \cite{S} applied to the elements
$y_1, b^{-1}y_2, by_3$.

\smallskip
3) Let $1 \ne d \in \FF_q^*$ be of order $p$. Then $d \notin\{1,a,b, ab^{-1}\}$ since
$b$, $a = b^{1-p}$, and $ab^{-1} = b^{-p}$ all have order dividing $m$. 
By Lemma \ref{regular} applied to the classes
$x_1^G$, $x_2^G$, $(d^{-1}x_3)^G$, there exist 
$u_i$ conjugate to $x_i$ such that $u_1u_2u_3 = dI$.
Let $U = \langle u_1, u_2, u_3 \rangle$. We claim that $U = G$ for any such 
choice of $u_i$.

First we see that $U$ acts irreducibly.  Since $u_1$ acts irreducibly on a
hyperplane, the only other possibility would be that $U$ preserves a hyperplane 
or a $1$-space. But then there would be a choice of eigenvalues $e_i$ of the $u_i$ 
with $e_1e_2e_3 = d$. Since $e_1=1$, $e_2 \in \{a,b\}$ and $e_3 \in \{1, b^{-1}\}$, 
$e_1e_2e_3$ is contained in $\{1,a,b, ab^{-1}\}$ and so cannot be equal to $d$.  

Now $U$ is an irreducible group containing pseudoreflections (up to scalar -- 
namely, $u_2$). Note also that, modulo scalars, $U$ is the normal closure of $u_2$ in 
$U$. (For if $N = \langle u_2^U \rangle$, then, modulo scalars, 
$U/N$ is generated by $u_1N$ and also 
by $u_3N$, and so has order dividing 
both $|u_1|$ and $|u_3|$, which are coprime). Thus, $U$ is generated by 
pseudoreflections of odd order modulo scalars. This implies that $U$ is primitive and
tensor-indecomposable, and this also excludes most of the ``obvious'' 
examples of pseudoreflection groups. Now the classification of finite 
pseudoreflection groups (cf. \cite[7.1]{gursaxl}) implies that $U = G$.   

\smallskip
4)  Let $\bar{u}_i \in \PSL_p(q)$ be the image of $u_i$.  Then 
$$\bar{u}_1\bar{u}_2\bar{u}_3 = 1,~~~ 
  \langle \bar{u}_1,\bar{u}_2,\bar{u}_3 \rangle = \PSL_p(q)$$
by the result of 3). But $\bar{u}_1$, $\bar{u}_2$, $\bar{u}_3$ do not lift to 
elements in $G$ of order coprime to $p$ with product equal to $1$ according to 2).}
\end{example}

Glauberman's classification of $S_4$-free simple groups \cite{Gl}, together with 
results of Goldschmidt \cite{Gol}, implies that Suzuki groups 
$\Sz(2^{2a+1})$, $a = 1,2, \ldots$, are the only finite non-abelian simple $3'$-groups.
We will need the full classification of finite simple groups to prove the 
following statement.

\begin{lemma}\label{3sol}
Let $S$ be a finite non-abelian simple group. Suppose that every proper  
subgroup of $S$ is $3$-solvable. Then $S$ is either a Suzuki group $\Sz(q)$, 
or one of the groups listed in Theorem \ref{simple}.
\end{lemma}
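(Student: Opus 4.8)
The plan is to translate the $3$-solvability hypothesis into a statement about composition factors and then run a case analysis over the classification of finite simple groups (CFSG). First I would record the reformulation: by the Glauberman--Goldschmidt result quoted just above (\cite{Gl, Gol}), the non-abelian simple $3'$-groups are exactly the Suzuki groups, so a non-abelian simple group $T$ satisfies $3 \mid |T|$ if and only if $T$ is not a Suzuki group. Consequently a finite group is $3$-solvable precisely when none of its composition factors is a non-abelian simple group of order divisible by $3$, and the hypothesis ``every proper subgroup of $S$ is $3$-solvable'' says exactly that no proper subgroup of $S$ has such a composition factor. With this in hand the case $3 \nmid |S|$ is immediate: then $S$ is itself a non-abelian simple $3'$-group, hence $S \cong \Sz(q)$ for some $q = 2^{2a+1}$ (and all its proper subgroups, being $3'$-groups, are automatically $3$-solvable). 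So from now on I would assume $3 \mid |S|$, whence $S$ is not $3$-solvable and $S \not\cong \Sz(q)$, and the goal becomes to show that $S$ occurs in Theorem \ref{simple}.

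The heart of the matter is the claim $(\star)$: if $3 \mid |S|$ and $S$ has a non-solvable proper subgroup, then $S$ has a proper subgroup that is \emph{not} $3$-solvable. Granting $(\star)$, the hypothesis forces every proper subgroup of $S$ to be solvable, so $S$ is a minimal simple group and Thompson's Theorem \ref{simple} applies; since $3 \mid |S|$ rules out the Suzuki case (d), we land in (a), (b), (c), or (e), as required. By the reformulation of the first paragraph, to prove $(\star)$ it suffices to locate a proper subgroup of $S$ involving some non-abelian simple group of order divisible by $3$. A non-solvable proper subgroup $H$ certainly involves \emph{some} non-abelian simple group; the only way $(\star)$ could fail is if every non-abelian simple section of every proper subgroup of $S$ were a Suzuki group, and the real content of the lemma is to exclude this scenario via the classification.

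For the CFSG march I would treat the families in turn. If $S = \AAA_n$ with $n \ge 6$, then $\AAA_5 \cong \PSL_2(4)$ is a proper non-abelian simple subgroup with $3 \mid |\AAA_5|$, so $(\star)$ holds; only $\AAA_5$ itself survives, and it lies in Theorem \ref{simple}. For sporadic $S$ one reads off from the known maximal subgroups a proper subgroup involving a $3$-divisible non-abelian simple group. For $S$ of Lie type I would use the three standard sources of subgroups: subfield subgroups, Levi factors of parabolics, and subsystem subgroups. For $S = \PSL_2(q)$ this recovers Thompson's conditions exactly, since by Dickson's list the only non-abelian simple subgroups are the subfield groups $\PSL_2(q_0)$ and, for $q \equiv \pm 1 \pmod 5$, a copy of $\AAA_5$ --- all of which are $3$-divisible --- so $\PSL_2(q)$ satisfies the hypothesis iff it is minimal simple, i.e. iff it occurs in (a)--(c). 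For groups of larger rank a proper parabolic, Levi, or subsystem subgroup supplies a section $\PSL_2(q)$, $\PSL_3(q)$, $\PSU_3(q)$, etc.\ of order divisible by $3$ for all but the smallest fields, giving the required proper $3$-divisible simple section; the genuinely minimal simple small-field exceptions, such as $\SL_3(3)$ and $\PSL_3(2) \cong \PSL_2(7)$, already appear in Theorem \ref{simple}.

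I expect the main obstacle to be precisely this uniform bookkeeping over the Lie type families, with two points requiring care. First, one must ensure that the subgroup produced is genuinely proper \emph{and} has order divisible by $3$; this fails only for the very small fields, which must be handled by hand and which are exactly the entries of Theorem \ref{simple}. Second, because Suzuki groups live in characteristic $2$, one must verify that a characteristic-$2$ group $S$ that happens to contain a Suzuki subgroup nonetheless also contains a $3$-divisible non-abelian simple subgroup (arising from an $\SL_2$-type Levi or subsystem subgroup), so that the ``Suzuki-only'' scenario contemplated in the proof of $(\star)$ never actually occurs. Given the known subgroup structure of the finite simple groups each individual verification is routine, and the weight of the argument lies in assembling them.
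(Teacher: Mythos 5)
Your proposal is correct and follows essentially the same route as the paper's proof: a march through the classification that, for every simple group outside the stated list, exhibits a proper section which is non-abelian simple of order divisible by $3$ (typically $\AAA_5$ or $\PSL_2(q)$, obtained from Dickson's list, Levi subgroups, or subsystem subgroups), leaving only the minimal simple groups to be identified by Thompson's Theorem \ref{simple}. Your explicit Glauberman--Goldschmidt reformulation and the claim $(\star)$ are just a repackaging of what the paper does directly, so the two arguments coincide in substance.
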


\begin{proof}
Certainly, $S \not\cong \AAA_{n}$ for any $n \geq 6$ since $\AAA_5$ is not 
$3$-solvable. Similarly, one can check using \cite{Atlas} that each of the 
$26$ sporadic simple groups has a section isomorphic to $\AAA_5$.

Next suppose that $S$ is a finite simple group of Lie type over $\FF_q$ with $q \geq 4$. 
If the twisted Lie rank of $S$ is at least $2$, then a proper section of $S$ is 
isomorphic to $\PSL_2(q)$ which is again not $3$-solvable. Otherwise $S \cong \PSL_2(q)$,
$\Sz(q)$,  $\PSU_3(q)$, or $\tw 2G_2(q)$. In the first case, it is easy to check that $S$ must 
then be one of the groups listed in Theorem \ref{simple}. The second case is included in 
the lemma's conclusion. The last two cases cannot happen as otherwise $S$ has a section 
$\cong \PSL_2(q)$. Also observe that $\SL_2(q^3)$ embeds in $\tw 3D_4(q)$.

Finally, suppose that $S$ is a finite simple group of Lie type over $\FF_q$ with $q = 2,3$,
not isomorphic to $\tw 3D_4(q)$, $\PSL_2(7)$, or $\SL_3(3)$. Considering Levi subgroups or
subsystem subgroups of $S$, one readily checks that $S$ has a section isomorphic to 
$\AAA_5$ or $\PSL_2(7)$.
\end{proof}

Let $p,q,r$ be primes. By a {\it $(p,q,r)$-triple} in a finite group $G$ 
we mean a triple $(x,y,z)$ of nontrivial elements in $G$ such that $x$ is a $p$-element,
$y$ is a $q$-element, $z$ is an $r$-element, and $xyz = 1$. 
(Note that in fact the order of $p,q,r$ does not matter, since if 
$xyz = 1$, then $y(x^{z^{-1}})z = 1$, etc.)
Theorem \ref{main3} then 
states that any finite non-solvable group admits either a $(2,3,p)$-triple 
(for a prime $p \geq 5$), or a $(2,5,p)$-triple for a prime $p \geq 7$. Now we can 
characterize finite groups $G$ for which only the latter can happen. In fact,
we can characterize the finite $p$-solvable groups (with $p > 2$) as precisely 
the ones which do not admit any $(2,p,q)$-triple for any prime $q \neq 2,p$. 

To this end, first we use the classification of finite simple groups to describe the
minimal non-$p$-solvable simple groups.

\begin{lemma}\label{psol1}
Let $S$ be a finite non-abelian simple group and $p \geq 5$ a prime with $p$
dividing $|G|$. 
If every proper subgroup of $S$ is $p$-solvable, then either $S = \PSL_2(p)$, 
$\AAA_p$, or one of the  following holds. 

{\rm (i)} $S = \PSL_2(q)$ with $p|(q^2-1)$.

{\rm (ii)} $S = \PSL_n(q)$, $n \geq 3$ is odd, and $p$ divides $q^n-1$ but not 
$\prod^{n-1}_{i=1}(q^i-1)$.

{\rm (iii)} $S = \PSU_n(q)$ $n \geq 3$ is odd, and $p$ divides $q^n-(-1)^n$ but 
not $\prod^{n-1}_{i=1}(q^i-(-1)^i)$.

{\rm (iv)} $S = \Sz(q)$.

{\rm (v)} $S = \tw2 G_2(q)$, and $p$ divides $q^2-q+1$ but not $q^2-1$.

{\rm (vi)} $S = \tw2 F_4(q)$, $q \geq 8$, and $p|(q^4-q^2+1)$.

{\rm (vii)} $S = \tw3 D_4(q)$, and $p$ divides $q^4-q^2+1$ but not $q^6-1$.

{\rm (viii)} $S = E_8(q)$, and $p$ divides $q^{30}-1$ but not 
$\prod_{i=8,14,18,20,24}(q^i-1)$.

{\rm (ix)} $(S,p)$ is one of: $(M_{23},23)$, $(J_1,7 \mbox{ or } 19)$, 
$(Ly,37 \mbox{ or } 67)$, $(J_4,29 \mbox{ or } 43)$, $(Fi'_{24},29)$, 
$(BM,47)$, $(M,41 \mbox{ or } 59 \mbox{ or } 71)$.  
\end{lemma}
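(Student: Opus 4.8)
The plan is to show that any \emph{minimal non-$p$-solvable} simple group, i.e. one in which every proper subgroup is $p$-solvable, occurs in the stated list, using the classification of finite simple groups. First I would record the reformulation that drives everything. Since $p$-solvability passes to subgroups, all proper subgroups of $S$ are $p$-solvable if and only if every maximal subgroup $M$ of $S$ is $p$-solvable; and since any group of order prime to $p$ is automatically $p$-solvable, the only maximal subgroups that matter are those $M$ with $p\mid|M|$. Thus the argument reduces to the following: for every candidate $S$ \emph{not} in the list, I would exhibit a proper subgroup $M$ with $p\mid|M|$ that has a non-abelian composition factor of order divisible by $p$, i.e. a non-$p$-solvable section. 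By the classification, $S$ is alternating, sporadic, or of Lie type. For $S=\AAA_n$ with $p\mid|\AAA_n|$, i.e. $n\ge p$: if $n>p$, then the point stabiliser $\AAA_{n-1}$ is a proper subgroup that is itself non-abelian simple with $p\mid|\AAA_{n-1}|$, hence non-$p$-solvable, so minimality forces $n=p$ and $S=\AAA_p$. For the $26$ sporadic groups the statement is a finite computation with \cite{Atlas}: for each $S$ and each prime $p\ge5$ dividing $|S|$ one inspects the maximal subgroups of order divisible by $p$, and the pairs $(S,p)$ for which all of these are solvable (typically only the normaliser of a Sylow $p$-subgroup, a Frobenius group of the form $p\,{:}\,k$) are exactly those recorded in (ix); every other pair has a maximal subgroup carrying a non-abelian simple section of order divisible by $p$ and is discarded.

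Next I would dispose of the defining-characteristic case. Suppose $S$ is of Lie type in characteristic $p$ (so $S\ne\Sz(q),\tw2 G_2(q),\tw2 F_4(q)$, whose defining characteristics are $2$ or $3$, consistent with $p\ge5$). If $S=\PSL_2(q)$ with $q=p^f$ and $f>1$, the subfield subgroup $\PSL_2(p)$ is a proper non-$p$-solvable subgroup, forcing $f=1$ and $S=\PSL_2(p)$; if $S$ has (possibly twisted) rank at least $2$, or $S=\PSU_3(q)$, then a Levi or $A_1$-subsystem subgroup contains a copy of $\SL_2(q)$, which is non-$p$-solvable since $\PSL_2(q)$ is then a non-abelian simple section of order divisible by $p$. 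Hence in defining characteristic only $\PSL_2(p)$ survives, exactly as in the list.

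The substantial case is $S$ of Lie type over $\FF_q$ in characteristic $\ell\ne p$. Here I would set $e:=\operatorname{ord}_p(q)$, the multiplicative order of $q$ modulo $p$; since $p\ge5$, Zsygmondy's theorem \cite{Zs} guarantees that $p$ is a primitive prime divisor of $q^e-1$, and the condition $p\mid|S|$ bounds $e$ in terms of the rank, the $p$-elements lying only in maximal tori of order divisible by $\Phi_e(q)$. The governing dichotomy is: because the unipotent radical of a parabolic is an $\ell$-group and hence prime to $p$, a parabolic $P=QL$ satisfies $p\mid|P|$ if and only if $p\mid|L|$; so if some proper Levi factor, some field-extension subgroup $\GL_{n/d}(q^d)$ (Aschbacher class $\mathcal{C}_3$), some classical subgroup such as $\Sp_n(q)$ or $\OO_n(q)$ (class $\mathcal{C}_8$), or some subfield subgroup contains a non-abelian simple section of order divisible by $p$, then $S$ is excluded. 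Running this test family by family forces $e$ to be large relative to the rank, which is precisely what the cyclotomic conditions in (i)--(viii) encode: for $\PSL_n(q)$ the orthogonal/symplectic $\mathcal{C}_8$-subgroups and the $\mathcal{C}_3$ field-extension subgroups are what force $n$ to be odd together with $p\mid q^n-1$ and $p\nmid\prod_{i=1}^{n-1}(q^i-1)$, giving (ii); the unitary analysis is parallel and yields (iii); and $\PSL_2(q)$ with $p\mid q^2-1$ is the rank-one case (i). Once $e$ is this large, the only maximal subgroups of order divisible by $p$ that remain are the normalisers of the relevant $\Phi_e$-tori, which are metacyclic-by-small and hence solvable, so no further obstruction arises.

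The main obstacle will be this cross-characteristic Lie-type analysis, and within it the exceptional groups. For these I would invoke the known classification of their maximal subgroups and of their maximal-torus normalisers: the surviving cases $\Sz(q)$ (iv), $\tw2 G_2(q)$ (v), $\tw2 F_4(q)$ (vi), $\tw3 D_4(q)$ (vii) and $E_8(q)$ (viii) are exactly those where $e$ equals a ``large'' regular value---corresponding to $\Phi_6,\Phi_{12},\Phi_{30}$ and their twisted analogues---so that the $\Phi_e$-torus is cyclic with solvable normaliser, while every other maximal subgroup (the subsystem subgroups $E_7$, $D_8$, $A_8$, $\SU_3(q)$, $\Sp_4(q)$, $\SL_2(q^3)$, and so on) has order prime to $p$. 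Translating the requirement that all other maximal subgroups have $p'$-order into the explicit divisibility clauses, such as $p\nmid\prod_{i\in\{8,14,18,20,24\}}(q^i-1)$ for $E_8$, is where the delicate bookkeeping lies. The remaining difficulties are pinning down these thresholds uniformly and controlling the few small-$q$ and borderline Zsygmondy configurations (mitigated but not wholly eliminated by $p\ge5$); here I would rely on the primitive-prime-divisor machinery and the maximal-subgroup classification already used in the pseudoreflection analysis (cf. \cite{gursaxl}), in direct analogy with the $3$-solvable case treated in Lemma \ref{3sol} and Theorem \ref{simple}.
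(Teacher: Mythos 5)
Your core strategy for the direction the lemma actually asserts --- argue by contrapositive and, for each $S$ outside the list, exhibit a proper subgroup carrying a non-abelian simple section of order divisible by $p$ --- is precisely the paper's proof. The paper, too, settles $\AAA_n$ by comparison with $\AAA_{n-1}$ (equivalently, noting $\AAA_{p-1}$ is a $p'$-group while $\AAA_p$ is not $p$-solvable), handles the sporadic groups by inspection of \cite{Atlas}, disposes of defining characteristic via a $\PSL_2(p)$ section, and then runs through the cross-characteristic Lie-type groups family by family, naming explicit Levi/subsystem sections: $\PSL_{n-1}(q)$ and $\PSp_n(q)$ inside $\PSL_n(q)$, $\PSL_2(q^n)$ inside $\PSp_{2n}(q)$, and, for the exceptional groups, maximal-rank subgroups from \cite{LSS} such as $E_7(q)$, $\PSU_5(q^2)$, $\tw3 D_4(q^2)$ inside $E_8(q)$. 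Your repackaging of this via $e=\operatorname{ord}_p(q)$ and Aschbacher classes invokes the same subgroups and would go through; note only that the test must also be run on the symplectic and orthogonal families (which are eliminated entirely, not merely constrained --- none of them appears in the list), a step your summary states only for the linear and unitary groups.

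The one genuine defect is your closing verification that, under the cyclotomic conditions, ``the only maximal subgroups of order divisible by $p$ that remain are the normalisers of the relevant $\Phi_e$-tori\ldots so no further obstruction arises.'' This converse is not part of the statement: the lemma is one-directional, and its list is not claimed to be (and is not) the exact list of minimal non-$p$-solvable simple groups. Indeed your claim is false as stated: $S=\PSL_2(16)$ with $p=5$ satisfies (i), yet contains the non-$5$-solvable subfield subgroup $\PSL_2(4)\cong\AAA_5$, and similar subfield-subgroup examples occur in other cases of the list. Moreover, carrying out such a verification for the exceptional groups would require complete knowledge of their maximal subgroups (notably for $E_8(q)$), which is not available and which the paper deliberately avoids by using only the \emph{existence} of maximal-rank subgroups from \cite{LSS}. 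Delete that step; what remains is a correct proof along exactly the paper's lines.
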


\begin{proof}
By the assumption, every proper section of $S$ is $p$-solvable.
If $S = \AAA_n$, then $n = p$ since $\AAA_{p-1}$ is a $p'$-group and 
$\AAA_p$ is not $p$-solvable. The sporadic simple groups are treated using \cite{Atlas}.   

Suppose now that $S$ is a simple group of Lie-type in 
characteristic $r$. If $r = p \geq5$, then $S$ has a section isomorphic to 
$\PSL_2(p)$ which is not $p$-solvable, whence $S = \PSL_2(p)$. 
Now we consider the cases with $r \neq p$ type-by-type. 
The cases $S = \PSL_2(q)$ and $S = \Sz(q)$ are listed in (i) and (iv). 
Suppose $S = \PSL_n(q)$ with 
$n \geq 3$, but $p|(q^i-1)$ for some $2 \leq i \leq n-1$. 
In particular, if $n = 3$ then $q \geq 4$ since $p \geq 5$.
Hence, a proper simple section $\PSL_{n-1}(q)$ of $S$ is not $p$-solvable, a 
contradiction. Also, the case $p|(q^n-1)$ with $2|n$ is excluded 
by considering a section $\PSp_{n}(q)$ of $S$. Thus we arrive at (ii). 
Similarly, we arrive at (iii) if 
$S = \PSU_n(q)$ with $n \geq 3$. Next suppose that $S = \PSp_{2n}(q)$ with 
$n \geq 2$. If $p|(q^{2n}-1)$ then a proper section $\PSL_2(q^n)$ of $S$ 
is not $p$-solvable. Otherwise a proper section $\PSp_{2n-2}(q)$ of $S$ is not 
$p$-solvable. Similarly, if $S = \OO_{2n+1}(q)$ with $n \geq 3$, then 
either $\OO^{+}_{2n}(q)$ or $\OO^{-}_{2n}(q)$ is not $p$-solvable.
If $S = \OO^{+}_{2n}(q)$ with $n \geq 4$, then either 
$\OO^{+}_{2n-2}(q)$, or $\OO^{-}_{2n-2}(q)$, or $\PSL_n(q)$ is not $p$-solvable.
Suppose $S = \OO^{-}_{2n}(q)$ with $n \geq 4$. Then $S$ has two proper sections 
isomorphic to $\OO^{+}_{2n-2}(q)$ and $\OO^{-}_{2n-2}(q)$, and another 
proper section isomorphic to $\PSU_n(q)$ if $n$ is odd and 
$\PSL_2(q^n)$ if $n$ is even. At least one of these three sections is not 
$p$-solvable, a contradiction. If $G = \tw2 G_2(q)$, then $p$ cannot divide 
$q^2-1$ because of a section $\PSL_2(q)$, so we arrive at (v). 
If $G = G_2(q)$, then either $\PSL_3(q)$ or $\PSU_3(q)$ is not
$p$-solvable. If $G = \tw2 F_4(q)$, then by considering 
sections $\PSp_4(q)$ and $\PSU_3(q)$ of $S$ (and $\SL_3(3)$ for 
$q=2$) we see that $p|(q^4-q^2+1)$ and $q \geq 8$ as in (vi).
If $G = \tw3 D_4(q)$ but $p|(q^6-1)$, then a proper section 
$\PSL_2(q^3)$ is not $p$-solvable.
If $G = F_4(q)$, then a proper section
$\OO_9(q)$ or $\tw3 D_4(q)$ is not $p$-solvable. If $G = E_6(q)$, then a proper 
section $\OO^+_{10}(q)$, $\PSL_3(q^3)$, or $\tw3 D_4(q)$ is not $p$-solvable.
If $G = \tw2 E_6(q)$, then a proper 
section $\OO^-_{10}(q)$, $\PSU_3(q^3)$, or $\tw3 D_4(q)$ is not $p$-solvable.
If $G = E_7(q)$, then a proper section $E_{6}(q)$, $\tw2 E_6(q)$, or $\PSL_2(q^7)$ 
is not $p$-solvable. If $G = E_8(q)$ and $p|\prod_{i=8,14,18,20,24}(q^i-1)$, then a 
proper section $E_{7}(q)$, $\PSU_5(q^2)$, or $\tw3 D_4(q^2)$ is not $p$-solvable.
(The aforementioned sections of exceptional groups of Lie type come from
subgroups of maximal rank described in \cite{LSS}.)    
\end{proof}

\begin{lemma}\label{alt}
Let $q < p$ be odd primes. Then there exist $(2, q, p)$-triples in $\AAA_p$.
\end{lemma}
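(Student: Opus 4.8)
The plan is to reduce the statement to exhibiting a single explicit factorization of a $p$-cycle, and then to verify that the factorization works by a one-line genus (Riemann--Hurwitz) count. First I would reformulate: in $\AAA_p$ every nontrivial $p$-element is a $p$-cycle, so producing a $(2,q,p)$-triple is the same as writing some $p$-cycle $c$ as a product $c=xy$ with $x$ a nontrivial $2$-element and $y$ a nontrivial $q$-element, and then setting $z=c^{-1}$. Since a $p$-cycle, a product of $q$-cycles (with $q$ odd), and an even involution are all even permutations, the resulting $x,y,z$ automatically lie in $\AAA_p$. Thus it suffices to exhibit one such factorization.

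Next I would build $x$ and $y$ by hand. Put $m=\lfloor p/q\rfloor\ge 1$ and $\phi=p-mq$; as $p$ is prime and $q<p$ we have $q\nmid p$, so $1\le \phi\le q-1$. Let $y$ be a product of $m$ disjoint $q$-cycles together with $\phi$ fixed points $f_1,\dots,f_\phi$; this is a nontrivial, even $q$-element. For $x$ I take a product of $r:=m+\phi-1$ disjoint transpositions, chosen so that, if we regard the $m$ $q$-cycles and the $\phi$ fixed points as $m+\phi$ vertices and each transposition as an edge joining the two vertices whose supports it meets, the resulting graph is a spanning tree: link the $q$-cycles in a path, and hang all of $f_1,\dots,f_\phi$ as leaves off the first $q$-cycle. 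This can be done with \emph{disjoint} transpositions precisely because $q\ge 3$: the first $q$-cycle is then incident to at most $1+\phi\le q$ edges and every other $q$-cycle to at most $2\le q$ edges, so each $q$-cycle supplies enough distinct points. Finally $r=p-1-m(q-1)$ is even (both $p$ and $q$ are odd), so $x$ is a nontrivial even involution, i.e. $x\in\AAA_p$.

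The key step is then immediate. Because the tree is connected and each $q$-cycle is a single cycle of $y$, the group $\langle x,y\rangle$ is transitive on the $p$ points. Writing $c(\sigma)$ for the number of cycles of $\sigma$ (fixed points included), $x$ has $p-r$ cycles and $y$ has $m+\phi$ cycles, so $c(x)+c(y)=(p-r)+(m+\phi)=p+1$. The Riemann--Hurwitz inequality for a transitive triple with $xyz=1$ (genus $\ge 0$) gives $c(x)+c(y)+c(xy)\le p+2$, whence $c(xy)\le 1$, and since every permutation has at least one cycle, $c(xy)=1$ and $xy$ is a single $p$-cycle. Setting $z=(xy)^{-1}$ produces the desired $(2,q,p)$-triple.

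The only points needing care are the feasibility of choosing the $r$ transpositions disjoint while keeping the object-graph a tree, and the parity of $r$; both come out cleanly from $q$ being an odd prime $\ge 3$ and $p$ being odd, as above. I expect the transitivity/genus bookkeeping to be the real crux, and if one prefers to avoid Riemann--Hurwitz it can be replaced by the elementary fact that multiplying a permutation by a transposition joining two of its distinct cycles merges those cycles: applying the transpositions of $x$ to $y$ in an order that grows the tree one vertex at a time then shows directly that $xy$ is a single $p$-cycle.
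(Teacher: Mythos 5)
Your proposal is correct and follows essentially the same route as the paper: both take $y$ to be $\lfloor p/q\rfloor$ disjoint $q$-cycles with the leftover points fixed, take $x$ to be an even involution whose transpositions link the cycles and fixed points into a connected (tree-like) configuration, and then force $xy$ to be a $p$-cycle from the transitivity of $\langle x,y\rangle$ together with the inequality $\ind(x)+\ind(y)+\ind(xy)\ge 2p-2$. The paper invokes this inequality as Scott's Lemma applied to the natural permutation module $\CC^p$, which in this setting is the same bound as your Riemann--Hurwitz count, so the two arguments coincide.
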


\begin{proof} 
Write $p = sq + t$ with $s \ge 1$ and $0 < t < q$.
Let $y \in \AAA_p$ be an element of order $q$ with $s$ nontrivial cycles.
Let $O_1, \ldots, O_s$ be the nontrivial orbits of $y$ and $O_0$ the 
set of all fixed points
of $y$ (acting on $\{1,2, \ldots,p\}$). Take an involution $x \in \SSS_p$ 
which is a product of $s+t-1$ transpositions so that 
$x$ moves exactly one point $a_{2i-1}$ of $O_i$ to $a_{2i} \in O_{i+1}$ for 
$i=1, \ldots, s-1$, and for each $j \in O_0$,  $xj$ is not in $O_0$ 
and disjoint from the points $a_1, \ldots ,a_{2s-2}$ (this is possible since 
$sq-2(s-1) \ge q \ge t$). For any $u \in \AAA_p$, let 
$\ind(u)$ be the difference between $p$ and the number of cycles of $u$,
i.e. the codimension of the fixed point subspace of 
$u$ on the natural permutation module $V = \CC^p$.
Then $\ind(x) = s + t-1$, and $\ind(y) = s(q-1)$. Note that
$s+t-1$ is even, so $x \in \AAA_p$. 
By construction $\langle x,y \rangle$ is a transitive subgroup
of $\AAA_p$. Applying Scott's Lemma \cite{S} to the action of 
$\langle x,y \rangle$ on $V$, we see that
$$\ind(x) + \ind(y) + \ind(xy) \ge 2p -2.$$
Hence, $\ind(xy) \ge p-1$, which forces $xy$ to be a $p$-cycle.
Thus, $(x, y, (xy)^{-1})$ is a $(2, q, p)$-triple.
\end{proof}

\begin{prop}\label{psol2}
Let $G$ be a finite quasisimple group and $p \geq 5$ a prime. 
Suppose that every proper subgroup of $G$ is $p$-solvable but $G$ is not 
$p$-solvable. Then $G$ admits a $(2,p,s)$-triple for some odd prime $s \neq p$.
\end{prop}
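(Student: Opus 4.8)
The plan is to reduce to the simple quotient $S = G/Z(G)$, whose proper subgroups are all $p$-solvable (they are images of proper subgroups of $G$), so that $S$ is one of the groups listed in Lemma~\ref{psol1}. First I would record a reduction to a convenient cover: since conjugation does not affect the relation $x_1x_2x_3=1$, it suffices to produce, in the universal cover of $S$, a triple in which the $2$-element is noncentral; such a triple maps to a $(2,p,s)$-triple in any quasisimple $G$ with $G/Z(G)=S$, because the images stay nontrivial (the orders of the $p$- and $s$-elements are coprime to $|Z(G)|$ in the relevant cases, and the $2$-element is chosen outside the center). For the families $\PSL_2(p)$, case~(i) ($\PSL_2(q)$) and case~(iv) ($\Sz(q)$), the universal cover is $\SL_2(q)$ or $\Sz(q)$, and Lemma~\ref{qs}(i) already yields a $(2,p,s)$-triple with $s\neq2,p$; the finitely many exceptional covers (such as $\SL_2(9)$ and $2^2\cdot\Sz(8)$) are checked directly from \cite{Atlas}.

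For $S=\AAA_p$ I would invoke Lemma~\ref{alt} to get a $(2,q,p)$-triple $(x,y,z)$ in $\AAA_p$ with $q<p$ an odd prime and $\langle x,y\rangle=\AAA_p$, so that $s:=q$ works. The one point is lifting to $G$. For $p\geq11$ the multiplier is $C_2$, so $Z(G)\in\{1,C_2\}$: the odd-order $y,z$ lift uniquely, $\langle\hat x,\hat y,\hat z\rangle=G$ (as $Z(G)\le\Phi(G)$ for quasisimple $G$), and the two lifts of $x$ differ by the central involution, so exactly one choice makes the product trivial, the resulting $2$-element having order $2$ or $4$. The cases $p\in\{5,7\}$, and the sporadic pairs of case~(ix), are finite verifications from \cite{Atlas} via the count (\ref{count}).

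The conceptual core is the generic Lie-type case (ii), (iii), (v), (vii), (viii). Here $p$ is not the defining characteristic, and a $p$-element of the prescribed order is regular semisimple, lying in a cyclic, essentially self-centralizing maximal torus of order $\Phi_d(q)$. I would take $x_2$ regular semisimple of order $p$ and, using Zsygmondy's theorem \cite{Zs} to supply a primitive prime divisor $s\neq2,p$, take $x_3$ regular semisimple of order $s$. In \emph{odd} defining characteristic a noncentral $2$-element is semisimple, so Lemma~\ref{regular} applies verbatim: $x_2^G\cdot x_3^G$ contains every noncentral semisimple element, in particular a noncentral involution $x_1^{-1}$, and then $x_2^g x_3^h x_1=1$ is the desired triple. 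This settles (ii), (iii), (v), (vii), (viii) in odd characteristic (case~(v), $\tw2 G_2(q)$, is always in characteristic $3$).

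The main obstacle is \emph{even} defining characteristic: case~(vi) ($\tw2 F_4(q)$, always) and the instances of (ii), (iii), (vii), (viii) with $q$ even. There the required $2$-element is unipotent, $\St$ vanishes on it, and Lemma~\ref{regular} as stated gives nothing, since it only reaches semisimple targets; moreover the relevant $p$-element lies in a Coxeter-type torus contained in no rank-$1$ subgroup, so there is no reduction to the $\SL_2$/$\Sz$ situation. I expect to resolve this by realizing a chosen unipotent involution as a product $x_2 x_3$ of two regular semisimple classes: for the classical cases (ii), (iii) via an explicit construction on the natural module combined with Scott's Lemma \cite{S}, in the spirit of Example~\ref{slp}; for the exceptional cases (vi), (vii), (viii) either by strengthening the character-theoretic argument of Lemma~\ref{regular} to unipotent targets (estimating the sum in (\ref{count}) using that very few irreducible characters are nonzero on two independent regular semisimple classes) or by invoking a known covering theorem for products of two regular semisimple classes. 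This even-characteristic analysis is the delicate part of the proof.
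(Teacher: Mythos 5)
Your reduction to the Schur cover, the treatment of $\PSL_2(p)$, $\PSL_2(q)$, $\Sz(q)$ via Lemma~\ref{qs}(i), the $\AAA_p$ case via Lemma~\ref{alt} with the central-adjustment lifting trick, the sporadic cases via (\ref{count}) and \cite{Atlas}, and the odd-defining-characteristic Lie-type cases via regular semisimple $p$- and $s$-elements plus Lemma~\ref{regular} all match the paper's proof. But the even-defining-characteristic case is a genuine gap, not a proof: you correctly identify that Lemma~\ref{regular} only reaches semisimple targets and that the $2$-element must be unipotent, and you then write ``I expect to resolve this by\dots'' followed by two candidate strategies, neither of which is carried out. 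Since this covers cases (ii), (iii), (vi), (vii), (viii) of Lemma~\ref{psol1} with $q$ even --- including $\tw2 F_4(q)$, which arises \emph{only} in characteristic $2$ --- the proposition remains unproved on what you yourself call its delicate part. For the record, your second candidate (strengthening the character-theoretic argument, exploiting that few characters are nonzero on two independent regular semisimple classes) is essentially what the paper does: for $\PSL_n(q)$/$\PSU_n(q)$ it places $x$ and $y$ in \emph{weakly orthogonal} maximal tori $T_n$ and $T_{1,n-1}$, so that by \cite[Proposition 2.2.2]{LST} any character nonvanishing at both is unipotent, and then \cite{MSW} reduces to $1_G$ and $\St$, giving $x^G \cdot y^G \supseteq G \setminus Z(G)$; for $\tw3 D_4(q)$ the same orthogonality argument (via centralizer orders in \cite{DM}) leaves three unipotent characters, and a unipotent element of class $D_4(a_1)$ is chosen where the extra one vanishes \cite{Sp}; for $\tw2 F_4(q)$ and $E_8(q)$ one takes $z$ regular unipotent and estimates (\ref{count}) directly, using $|C_G(z)| \leq 4q^2$ resp.\ $4q^8$ \cite{LiS}, degree bounds \cite{Lu,LS}, and class-number bounds \cite{FG}. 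Your first candidate for the classical cases (explicit matrices plus Scott's Lemma, as in Example~\ref{slp}) is not what the paper does and is not obviously workable, since Scott's Lemma constrains generation but does not by itself produce a triple with product $1$.

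A secondary, smaller gap: your blanket appeal to Zsigmondy's theorem \cite{Zs} for the odd prime $s$ ignores the exceptional cases where no primitive prime divisor exists. These genuinely occur here: $\SL_3(q)$ with $q$ a Mersenne prime (no odd prime divides $q+1$), $\SU_3(q)$ with $q$ a Fermat prime, $\SU_3(3)$ with $p=7$, and $\SL_7(2)$. The paper handles the first two by reversing roles --- taking a regular semisimple $2$-element $y$ of order $2^t$ and using Lemma~\ref{regular} to hit a noncentral semisimple $s_1$-element for an odd prime $s_1 \mid (q \mp 1)$ --- and the last two by an \cite{Atlas} computation resp.\ by taking $s=3$ with $y$ of order $9$. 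Without these adjustments your argument as written fails in those cases.
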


\begin{proof}
We apply Lemma \ref{psol1} to $S = G/Z(G)$ and may assume that $G$ is a Schur cover
of $S$. 

1) If $S = \PSL_2(p)$, then $G = \SL_2(p)$ since $p \geq 5$, 
and so we are done by Lemma \ref{qs}(i). Suppose $S = \AAA_p$. 
If $p \neq 7$, then by Lemma \ref{alt}, $S$ admits a $(2,p,q)$-triple, which
then lifts to a $(2,p,q)$-triple in $G = 2\AAA_p$ by p. 1) of the proof of 
Theorem \ref{main3}. Similarly, if $p = 7$, then by Lemma \ref{alt}, $S$ admits a 
$(2,3,7)$-triple, which first lifts to a $(2,3,7)$-triple in $2\AAA_7$ and then
lifts to a $(2,3,7)$-triple in $G = 6\AAA_7$. The same argument shows that 
$6 \cdot \PSL_2(9)$ admits a $(2,3,5)$-triple, and that
$2^2 \cdot \Sz(8)$ admits a $(2,p,s)$-triple for any $p \in \{5,7,13\}$. 
The cases $S = \PSL_2(q)$ and $\Sz(q)$ now follow from Lemma \ref{qs}(i). 

The sporadic groups listed in Lemma \ref{psol1} can certainly by checked 
using (\ref{count}) and \cite{Atlas}. But we point out a slightly easier way 
to check it as follows. Consider the case $S = M$ (so that 
$G = S$) and let $p \in \{41,59,71\}$.
Pick a prime $s \in \{41,59,71\} \setminus \{p\}$, $x \in S$ of order $p$, 
$y \in S$ of order $s$, and $z \in S$ of order $32$. Then 
$|C_G(x)| = p$, $|C_G(y)| = s$, $|C_G(z)| = 128$; also, if $\chi \in \Irr(G)$
is non-principal then $\chi(1) \geq 196883$. Since $|\Irr(G)| = 194$, it follows 
that 
$$|\sum_{\chi \in \Irr(G)}\frac{\chi(x)\chi(y)\bar{\chi}(z)}{\chi(1)}|
  \geq 1 - \frac{193 \cdot \sqrt{59 \cdot 71 \cdot 128}}{196883} > 0,$$
and so a $(2,p,s)$-triple exists. As another example, consider the case 
$S = Fi'_{24}$ (so we may assume $G = 3S$ and $p = 29$). Choosing $x \in G$ of 
order $p$, $y \in G$ of order $s=17$, and $z \in G$ of order $16$, we have  
$|C_G(x)| = 3p$, $|C_G(y)| = 3s$, $|C_G(z)| = 96$. Now if $\chi \in \Irr(G)$
is non-principal and $\chi(x)\chi(y)\bar{\chi}(z) \neq 0$, then 
$\chi(1) \geq 249548$. Since $|\Irr(G)| = 260$, it follows 
that 
$$|\sum_{\chi \in \Irr(G)}\frac{\chi(x)\chi(y)\bar{\chi}(z)}{\chi(1)}|
  \geq 1 - \frac{259 \cdot \sqrt{87 \cdot 51 \cdot 96}}{249548} > 0.$$
The same argument works for $S = M_{23}$ with $(p,s) = (23,11)$, 
$S = J_{1}$ with $(p,s) = (7,19)$, $S = Ly$ with $(p,s) = (37,67)$,
$S = J_{4}$ with $(p,s) = (29,43)$, and $S = BM$ with $(p,s) = (31,47)$.  

From now on we may assume that $S$ is a simple group of Lie-type in characteristic
$r \neq p$ (and not isomorphic to any of the aforementioned simple groups). 
The last assumption implies that $G$ is a Lie-type
group of simply connected type corresponding to $S$.

\smallskip
2) Assume in addition $r \neq 2$, so that we are in the cases (ii), (iii), (v),
(vii), or (viii) of Lemma \ref{psol1}.  In all these cases, the conditions on $p$
imply that a $p$-element $x \in G$ is regular semisimple (see e.g. 
\cite[Lemma 2.3]{MT} for exceptional groups). Clearly, Sylow 
subgroups of $G$ cannot be central (since $|S|$ and $|G|$ have same set of
prime divisors). Suppose that we can find a regular 
semisimple $s$-element $y \in G$, for a suitable prime $s \neq 2,p$. 
Then we can apply Lemma \ref{regular} to get a noncentral $2$-element 
$z \in x^G \cdot y^G$, yielding a $(2,p,s)$-triple. 

$\bullet$ Suppose $S = \PSL_n(q)$. If $n \geq 5$, or $n = 3$ but $q$ is not 
a Mersenne prime, then we can choose $s$ to be a primitive prime divisor 
of $q^{n-1}-1$. If $n = q = 3$, then $p = 13$ and we are done by Lemma \ref{qs}(i).  
In the remaining case, $G = \SL_3(q)$ and $q = 2^t-1 \geq 7$ is a 
Mersenne prime. Then $G$ contains a regular semisimple element $y$ of order 
$2^t$ (with eigenvalues $\alpha$, $\alpha^{-1}$, $1$, for some 
$\alpha \in \overline{\FF}_q^{\times}$ of order $2^t$). Also choose $s_1$ to be an 
odd prime divisor of $q-1 = 2^t-2$. By Lemma \ref{regular}, $x^G \cdot y^G$ 
contains a noncentral semisimple $s_1$-element $z$, giving rise to a 
$(2,p,s_1)$-triple.

$\bullet$ Suppose $S = \PSU_n(q)$. If $n \geq 5$, or $n = 3$ but $q$ is not 
a Fermat prime, then we can choose $s$ to be a primitive prime divisor 
of $(-q)^{n-1}-1$. If $n = q = 3$, then $p = 7$. In this case, 
picking $x$ of class $7A$, $y$ in class $2A$, and $z$ in class $3B$ in 
the notation of \cite{Atlas}, we see that $1_G$ is the only irreducible character
$\chi$ of $G$ such that $\chi(x)\chi(y)\bar{\chi}(z) \neq 0$, whence $G$ admits a 
$(2,3,7)$-triple. In the remaining case, $G = \SU_3(q)$ and $q = 2^{t}+1 \geq 5$ 
is a Fermat prime. Then $G$ contains a regular semisimple element $y$ of order 
$2^t$ (with eigenvalues $\alpha$, $\alpha^{-1}$, $1$, for some 
$\alpha \in \FF_q^{\times}$ of order $2^t$). Also choose $s_1$ to be an odd 
prime divisor of $q+1 = 2^t+2$. By Lemma \ref{regular}, $x^G \cdot y^G$ contains   
a noncentral semisimple $s_1$-element $z$, giving rise to a $(2,p,s_1)$-triple.

$\bullet$ Suppose $S = \tw2 G_2(q)$. Then there exist some $\eps = \pm 1$ such that 
$p|(q+\eps\sqrt{3q}+1)$, and some odd prime $s|(q-\eps\sqrt{3q}+1)$. Now
$G$ contains a regular semisimple $s$-element, and so we are done.

$\bullet$ Suppose $S = \tw3 D_4(q)$, and let $s$ be a primitive prime divisor 
of $q^3-1$. Then $G$ contains a regular semisimple 
$s$-element (of type $s_{12}$ as listed in \cite{DM}), and so we are done.

$\bullet$ Suppose $S = E_8(q)$, and let $s$ be a primitive prime divisor 
of $q^{24}-1$. By \cite[Lemma 2.3]{MT}, $G$ contains a regular semisimple 
$s$-element, and so we are done again.  

\smallskip
3) Assume now that $r = 2$, so that we are in the cases (ii),(iii), (vi)--(viii)
of Lemma \ref{psol1}.  In all these cases, the conditions on $p$
again imply that a $p$-element $x \in G$ is regular semisimple.

$\bullet$ Suppose $S = \PSL_n(q)$ or $S = \PSU_n(q)$. Set $\eps = 1$ in the $\SL$-case and
$\eps = -1$ in the $\SU$-case. If in addition $S \neq \SL_7(2)$, 
then we can choose an odd prime $s$ 
dividing $q^{n-1}-\eps^{n-1}$ but not $\prod^{n-2}_{i=1}(q^i-\eps^i)$. By the choice
of $s$, $G$ contains a regular semisimple $s$-element $y$, which is 
contained in a (unique) maximal torus of type $T_{1,n-1}$ of $G$, in the 
notation of \cite{LST}. The same is true for $S = \SL_7(2)$ if we choose 
$s = 3$ and $y \in S$ of order $9$.   
Also, $x$ is contained in a (unique) maximal torus of 
type $T_n$. Let $\chi \in \Irr(G)$ be any character of $G$ that is nonzero
at both $x$ and $y$. By Propositions 2.3.1 and 2.3.2 of \cite{LST}, the tori 
$T_n$ and $T_{1,n-1}$ are weakly orthogonal. Hence \cite[Proposition 2.2.2]{LST}
implies that $\chi \in \Irr(G)$  must be unipotent. Now the proofs of Theorems 
2.1 and 2.2 of \cite{MSW} 
imply that such a unipotent character $\chi$ is either trivial
or the Steinberg character $\St$. Since $x,y$ are regular, we also have 
$|\St(x)| = |\St(y)| = 1$. It follows by (\ref{count}) that 
$x^G \cdot y^G \supseteq G \setminus Z(G)$. In particular, 
we get a noncentral $2$-element $z \in x^G \cdot y^G$, yielding a 
$(2,p,s)$-triple. 

$\bullet$ Suppose $S = \tw3 D_4(q)$, and let $s$ be a primitive prime divisor 
of $q^3-1$. Then $G$ contains a regular semisimple $s$-element $y$ (of type 
$s_{12}$ as listed in \cite{DM}). Let $T_1$, respectively $T_2$, be the 
unique maximal torus containing $x$, respectively $y$. If $\Phi_m(q)$ denotes 
the $m^{\mathrm {th}}$ cyclotomic polynomial in $q$, then 
$|T_1| = \Phi_{12}(q)$ and $|T_2| = \Phi_3(q)^2$. The order of the centralizer of any
semisimple element in the dual group $G^* \cong G$ is listed in 
\cite[Tables 1.1, 2.2]{DM}. Using this, it is easy to see that the centralizer of 
no nontrivial semisimple element of $G^*$ can have order divisible by both 
$|T_1|$ and $|T_2|$. Thus the dual tori $T_1^*$ and $T_2^*$ in $G^*$ intersect
trivially, and so $T_1$ and $T_2$ are weakly orthogonal. By 
\cite[Proposition 2.2.2]{LST}, any irreducible character $\chi \in \Irr(G)$ 
that is nonzero on both $x$ and $y$ must be unipotent. Note that 
the $p$-parts of $\Phi_{12}(q)$ and of $|G|$ are the same. Hence, if $\Phi_{12}(q)$
divides $\chi(1)$, then $\chi$ has $p$-defect $0$ and so $\chi(x) = 0$. 
Similarly, if $\Phi_{3}(q)^2$ divides $\chi(1)$, then $\chi$ has $s$-defect $0$ and
so $\chi(y) = 0$. Inspecting the list of unipotent characters as given in \cite{Sp},
we see that $\chi = 1_G$, $\St$, or the unique unipotent character $\rho$ of 
degree $q^3(q^3+1)^2/2$. Choosing $z$ to be a unipotent element of class $D_4(a_1)$
of \cite{Sp}, we see that $\rho(z) = \St(z) = 0$. It now follows
by (\ref{count}) that $z \in x^G \cdot y^G$, giving rise to a 
$(2,p,s)$-triple.

$\bullet$ Suppose $S = \tw2 F_4(q)$ with $q > 2$. Then there exist some 
$\eps = \pm 1$ such that $p|(q^2+q+1+\eps\sqrt{2q}(q+1))$, and some 
primitive prime divisor $s$ of $q^6-1$, and $G$ contains a regular semisimple 
$s$-element by \cite[Lemma 2.3]{MT}. In particular,
$$|C_G(x)| = q^2+q+1+\eps\sqrt{2q}(q+1),~~~|C_G(y)| = q^2-q+1.$$
Next, take $z$ to be a regular unipotent element, so that 
$|C_G(z)| \leq 4q^2$ \cite{LiS}. In particular,
$$|\chi(x)\chi(y)\bar{\chi}(z)| < \sqrt{|C_G(x)| \cdot |C_G(y)| \cdot |C_G(z)|} <
  (2.7)q^3$$
for any $\chi \in \Irr(G)$. Now $G$ has 
exactly two irreducible characters of degree $(q^2-1)(q^3+1)\sqrt{q/2}$, and 
all other nontrivial irreducible characters have degree at least 
$q(q^2-q+1)(q^4-q^2+1)$, cf. \cite{Lu}. Also, 
$|\Irr(G)| \leq q^2+4q+17 < (1.8)q^2$ \cite[Table 1]{FG}. 
It now follows by (\ref{count}) that 
$$|\sum_{\chi \in \Irr(G)}\frac{\chi(x)\chi(y)\bar{\chi}(z)}{\chi(1)}| 
  > 1 - \frac{(1.8)q^2 \cdot (2.7)q^{3}}{q(q^2-q+1)(q^4-q^2+1)} - 
    \frac{(2.7)q^{3}}{(q^2-1)(q^3+1)\sqrt{q/2}} > 0,$$
whence $z \in x^G \cdot y^G$, giving rise to a $(2,p,s)$-triple.  

$\bullet$ Suppose $S = E_8(q)$, and let $s$ be a primitive prime divisor 
of $q^{24}-1$. By \cite[Lemma 2.3]{MT}, $G$ contains a regular semisimple 
$s$-element $y$. Then 
$|C_G(x)| = \Phi_m(q)$ with $m \in \{15,30\}$, and $|C_G(y)| = \Phi_{24}(q)$. 
Next we choose $z$ to be a regular unipotent element, so that 
$|C_G(z)| \leq 4q^8$ \cite{LiS}. Now for any nontrivial $\chi \in \Irr(G)$ we have 
that $\chi(1) > q^{27}(q^2-1)$ by the Landazuri-Seitz-Zalesskii bound \cite{LS},
and 
$$|\chi(x)\chi(y)\bar{\chi}(z)| < \sqrt{|C_G(x)| \cdot |C_G(y)| \cdot |C_G(z)|} <
  q^{14}.$$
On the other hand, $|\Irr(G)| < (5.1)q^8$ \cite[Table 1]{FG}. 
It now follows by (\ref{count}) that 
$$|\sum_{\chi \in \Irr(G)}\frac{\chi(x)\chi(y)\bar{\chi}(z)}{\chi(1)}| 
  > 1 - \frac{(5.1)q^8 \cdot q^{14}}{q^{27}(q^2-1)} > 0,$$
whence $z \in x^G \cdot y^G$, giving rise to a $(2,p,s)$-triple.  
\end{proof}

We can now prove Theorem \ref{main4}.  We restate the result.

\begin{theorem}
Let $G$ be a finite group. Then the following statements are equivalent:

{\rm (i)} $G$ admits no $(2,p,q)$-triple for any odd prime $q \neq p$;

{\rm (ii)} $G$ is $p$-solvable. 

{\rm (iii)} If $T$ is a Sylow $2$-subgroup of $G$, $P$ a Sylow $p$-subgroup
of $G$ and $Q$ a Sylow $q$-subgroup of $G$ (for any prime $q \ne p$), then
$TP \cap Q = 1$. 
\end{theorem}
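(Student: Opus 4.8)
The plan is to prove the three equivalences by establishing (ii) $\Rightarrow$ (i) directly and elementarily, (i) $\Rightarrow$ (ii) by contraposition using the classification-based results of this section, and (i) $\Leftrightarrow$ (iii) as a routine translation between triples and products of Sylow subgroups. Throughout I read (iii) with $q$ an odd prime distinct from $2$ and $p$, matching the odd $q \neq p$ of (i). I take $p \geq 5$ and dispose of $p = 3$ at the end, using Lemma \ref{3sol} and Lemma \ref{qs} in place of Lemma \ref{psol1} and Proposition \ref{psol2}.

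For (ii) $\Rightarrow$ (i), suppose $G$ is $p$-solvable but admits a $(2,p,q)$-triple $(x,y,z)$, and let $G$ be a minimal such example. Since $y \neq 1$ is a $p$-element, $p \mid |G|$, so $G$ has a minimal normal subgroup $M$ which, as $G$ is $p$-solvable, is either an elementary abelian $p$-group or a $p'$-group. Passing to $G/M$, I would argue that all three images stay nontrivial: if $M$ is a $p'$-group then $\bar y \neq 1$, while $\bar x = 1$ or $\bar z = 1$ would force $\bar y \bar z = 1$ or $\bar x \bar y = 1$, producing an element that is simultaneously a $p$- and $q$-element (resp. a $2$- and $p$-element), hence trivial, a contradiction; the case $M$ a $p$-group is symmetric, using that $x,z$ have order prime to the odd prime $p$. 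Thus $(\bar x,\bar y,\bar z)$ is a triple in the smaller $p$-solvable group $G/M$, contradicting minimality. This direction uses neither the classification nor $p \geq 5$.

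For (i) $\Rightarrow$ (ii) I argue the contrapositive: a non-$p$-solvable $G$ admits a $(2,p,q)$-triple. Take a minimal counterexample $G$. Then every proper subgroup is $p$-solvable, and the argument of Lemma \ref{minimal} adapts to show that the $p$-solvable radical $R$ equals $\Phi(G)$, that $S := G/R$ is a non-abelian simple group all of whose proper subgroups are $p$-solvable, and that $G$ is perfect (so $G = O^r(G)$ for every prime $r$). First I would show $O_2(G) = 1$: otherwise $G/O_2(G)$ is non-$p$-solvable and smaller, hence has a triple whose $2$-part can be corrected modulo $O_2(G)$ exactly as in step 1) of the proof of Theorem \ref{main3}. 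Consequently $\Phi(G) = R$ has odd order. Now I split into two cases. If $\Phi(G) \leq Z(G)$, then $G$ is quasisimple with $G/Z(G) = S$, and Proposition \ref{psol2} (via Lemma \ref{psol1}) supplies a $(2,p,s)$-triple in $G$, a contradiction. Otherwise choose $N$ minimal among noncentral normal subgroups of $G$ contained in $\Phi(G)$; by minimality $N$ is an $r$-group with $r$ odd, and Proposition \ref{split}(ii) forces $N$ abelian (a non-abelian such $N$ could not lie in $\Phi(G)$). By induction $G/N$ has a $(2,p,q)$-triple, whose entries I lift to a $2$-element, a $p$-element and a $q$-element of $G$ with product in $N$. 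Using the already-proved implication (ii) $\Rightarrow$ (i), the images of these lifts cannot generate a proper (hence $p$-solvable) subgroup of $G/N$, so they generate $G/N$ and, as $N \leq \Phi(G)$, generate $G$. Lemma \ref{abelian} then lets me adjust the lifts within their $N$-cosets to obtain product $1$, a genuine triple in $G$ and the desired contradiction.

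Finally, (i) $\Leftrightarrow$ (iii) is a direct translation: a $(2,p,q)$-triple $(x,y,z)$ with $z^{-1} = xy$ places $x$ in some Sylow $2$-subgroup $T$, $y$ in some Sylow $p$-subgroup $P$ and $z^{-1}$ in some Sylow $q$-subgroup $Q$, so $1 \neq z^{-1} \in TP \cap Q$; conversely any nontrivial $w \in TP \cap Q$, written $w = xy$ with $x \in T$, $y \in P$, gives the triple $(x,y,w^{-1})$, with $x,y$ nontrivial since $w$ is a nontrivial $q$-element. I expect the main obstacle to lie in the case analysis of (i) $\Rightarrow$ (ii): the central obstruction to lifting is genuine, and the point is the dichotomy that either the radical is entirely central, in which case $G$ is quasisimple and Proposition \ref{psol2} absorbs the obstruction through explicit character estimates, or there is a noncentral minimal normal subgroup, in which case Lemma \ref{abelian} sweeps out the full coset. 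Ensuring $O_2(G) = 1$, so that $\Phi(G)$ is odd and Proposition \ref{split}(ii) applies, is precisely what rules out the even-order counterexamples flagged after Proposition \ref{split}.
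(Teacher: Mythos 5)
Your proposal is correct and takes essentially the same route as the paper's own proof: the same elementary minimal-counterexample argument for (ii) $\Rightarrow$ (i), the same reduction of a minimal counterexample to (i) $\Rightarrow$ (ii) to the case $G/\Phi(G)$ simple with $O_2(G)=1$, the same dichotomy between $\Phi(G) \leq Z(G)$ (handled by Proposition \ref{psol2}, resp.\ Lemma \ref{3sol} and Lemma \ref{qs} when $p=3$) and a noncentral minimal normal subgroup (handled by Proposition \ref{split}(ii) and Lemma \ref{abelian}), and the same translation for (i) $\Leftrightarrow$ (iii). The only cosmetic differences are that you prove $O_2(G)=1$ before rather than after the quasisimple case and that you spell out (i) $\Leftrightarrow$ (iii), which the paper dismisses as straightforward.
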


\begin{proof}
1) Suppose that $G$ is $p$-solvable but admits a $(2,p,q)$-triple for some prime 
$q \neq 2,p$. Choose such a $G$ of minimal order. Since $p$ divides $|G|$ and $G$ is 
$p$-solvable, $G$ cannot be simple. Let $1 < N < G$ be a normal subgroup of $G$. 
If $x,y,z$ all belong to $N$, then $(x,y,z)$ is a $(2,p,q)$-triple in $N$, and 
so by minimality of $G$, $N$ cannot be $p$-solvable, contradicting the 
$p$-solvability of $G$. So at least one of $x,y,z$ is not contained in $N$. In this case,
since $xyz=1$, all of them are outside of $N$ by order consideration. 
It follows that $(xN,yN,zN)$ is a $(2,p,q)$-triple in $G/N$, and so $G/N$ is not 
$p$-solvable by minimality, again a contradiction.

\medskip
2) From now on we will assume that $G$ is a not $p$-solvable and aim to show that $G$ admits 
a $(2,p,q)$-triple for some $q \neq 2,p$. Consider a minimal counterexample $G$ to this 
claim, so that $G$ has a composition factor $S$ which is not $p$-solvable but $G$ admits
no $(2,p,q)$-triple with $q \neq 2,p$. The minimality of $G$ implies that any 
proper subgroup of $G$ is $p$-solvable and that $G$ is perfect. 

Let $N$ be any proper normal subgroup of $G$, in particular, $N$ is $p$-solvable.
Suppose in addition that $N$ is not contained in $\Phi(G)$. Then $G=MN$ for 
a maximal subgroup $M < G$. Then $S$ is also a composition factor of $M$, whence 
$M$ is not $p$-solvable, a contradiction. Thus every proper normal subgroup of $G$
is contained in $\Phi(G)$. It follows that $G/\Phi(G)$ is simple and so it is isomorphic to
$S$. Since every proper subgroup of $G$ is $p$-solvable, the same holds for $S$,
whence $S$ is one of the groups listed in Lemma \ref{3sol} if $p = 3$ 
and in Lemma \ref{psol1} if $p \geq 5$.

\medskip
3) Suppose $\Phi(G) \leq Z(G)$. Then $G$ is a quasisimple group with all proper 
subgroups being $p$-solvable.

Assume $p = 3$. Then $S \not\cong \Sz(q)$ as $S$ is not $3$-solvable, so $S$ 
is one of the groups listed in Theorem \ref{simple}. 
This in turn implies that all proper subgroups of 
$S$ are solvable. This is also true for any proper subgroup $H < G$. (Indeed, 
in this case $H\Phi(G) < G$, whence $H\Phi(G)/\Phi(G) < S$ is solvable and 
$\Phi(G)$ is nilpotent.) Now Lemma \ref{qs} (and its proof) implies that 
$G$ admits a $(2,3,q)$-triple for some prime $q \neq 2,3$, a contradiction.
   
On the other hand, if $p \geq 5$, then according to Proposition \ref{psol2}, 
$G$ also admits a $(2,p,q)$-triple for some $q \neq 2,p$, again a contradiction. 

We have shown that $\Phi(G) > Z(G)$. 
Furthermore, if $O_2(G) \neq 1$, then $G/O_{2}(G)$ admits a $(2,p,q)$-triple 
for some prime $q \neq 2,p$ by minimality of $G$, which can then be lifted to
a $(2,p,q)$-triple in $G$ (see p.1) of the proof of Theorem \ref{main3}). 
Thus $O_2(G) = 1$. 

Let $N$ be a subgroup of $\Phi(G)$ that is normal in $G$ but is not central.  
Moreover, take $N$ to be a minimal such subgroup. Then the minimality implies
that $N$ is an $r$-group for some prime $r$, and $r > 2$ since $O_2(G) = 1$. 
This in turn implies by Proposition \ref{split}(ii) that $N$ is abelian. 
By the minimality of $G$, there are some nontrivial elements $x_i \in G$, $1 \le i \le 3$,
such that $x_1$ is a $2$-element, $x_2$ is a $p$-element, $x_3$ is a $q$-element
for some prime $q \neq 2,p$, and $x_1x_2x_3 = n \in N$. In particular,
the subgroup $L/N = \langle x_1N,x_2N,x_3N \rangle$ of $G/N$ admits a $(2,p,q)$-triple.
According to 1), $L/N$, and so $L$, is not $p$-solvable. Hence $L = G$. This implies
that $G = \langle x_1,x_2,x_3 \rangle$ since $N \leq \Phi(G)$.   
Now arguing as in p. 3) of the
proof of Theorem \ref{main3} and using  Lemma \ref{abelian}, we see that 
there are conjugates $y_i$ of $x_i$ for $i = 1,2,3$ such that 
$y_1y_2y_3 = (x_1x_2x_3)n^{-1} = 1$,
a contradiction to the fact that $G$ was a counterexample. 

The equivalence of (i) and (iii) is straightforward.
\end{proof}  

Theorem \ref{main4} yields the following immediate consequence:

\begin{cor}
Let $G$ be a finite group. Then the following statements are equivalent:

{\rm (i)} $G$ admits no $(2,3,p)$-triple for any prime $p \geq 5$;

{\rm (ii)} $G$ is $3$-solvable;

{\rm (iii)} Every composition factor of $G$ is either cyclic or a Suzuki group. 
\end{cor}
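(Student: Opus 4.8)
The plan is to read off the three-way equivalence by combining Theorem \ref{main4}, applied with its fixed prime taken to be $3$, with the classification of non-abelian simple $3'$-groups recalled at the start of this section. I would split the argument into the two halves (i)$\Leftrightarrow$(ii) and (ii)$\Leftrightarrow$(iii), since the first is a pure specialization of the main theorem while the second is a translation of the definition of $3$-solvability into the language of composition factors.

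First I would dispose of (i)$\Leftrightarrow$(ii). This is essentially immediate from Theorem \ref{main4} with fixed prime $3$: there the varying prime $q$ ranges over all odd primes different from the fixed prime, and the odd primes different from $3$ are exactly the primes $\geq 5$. Hence condition (i) of the corollary is verbatim condition (i) of Theorem \ref{main4} at fixed prime $3$ (with the corollary's varying prime $p\geq 5$ playing the role of the theorem's $q$), and condition (ii) is condition (ii) of that theorem. No further work is required for this half.

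The remaining task is (ii)$\Leftrightarrow$(iii), and here I would simply unwind the definition of $3$-solvability: $G$ is $3$-solvable if and only if every composition factor of $G$ is a $3$-group or a $3'$-group. Since composition factors are simple, a composition factor that is a $3$-group must be cyclic of order $3$, and a composition factor that is a $3'$-group is either cyclic of prime order coprime to $3$ or a non-abelian simple group of order coprime to $3$. The cyclic possibilities are precisely the ones accounted for by the word ``cyclic'' in (iii), so the whole content of the equivalence reduces to identifying the non-abelian simple $3'$-groups.

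This is exactly where the classification input enters, and it is the only nontrivial step. I would invoke the fact recorded just before Lemma \ref{3sol}, which follows from Glauberman's classification of $S_4$-free simple groups \cite{Gl} together with the results of Goldschmidt \cite{Gol}, namely that the Suzuki groups $\Sz(2^{2a+1})$ are the only finite non-abelian simple $3'$-groups. Granting this, a non-abelian simple composition factor is an admissible $3'$-factor precisely when it is a Suzuki group, and conversely every Suzuki group has order coprime to $3$ and so is a legitimate $3'$-factor. Assembling the cases, every composition factor is a $3$-group or a $3'$-group if and only if every composition factor is cyclic or a Suzuki group, which is the asserted equivalence of (ii) and (iii). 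The main obstacle is therefore not in the combinatorics of the argument but in this reliance on a deep classification of simple $3'$-groups; everything else is routine bookkeeping with the definition of $p$-solvability.
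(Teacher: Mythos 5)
Your proof is correct and is exactly the argument the paper intends: the paper presents this corollary as an "immediate consequence" of Theorem \ref{main4} (specialized to $p=3$, giving (i)$\Leftrightarrow$(ii)), with (ii)$\Leftrightarrow$(iii) coming from the Glauberman--Goldschmidt fact, recorded at the start of that section, that the Suzuki groups are the only non-abelian simple $3'$-groups. No gaps; your write-up just makes explicit what the paper leaves implicit.
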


\section{Examples with $p=2$} \label{even}

One of the key steps in proving Theorem \ref{main1} was
Proposition \ref{split}.  As we have observed, Proposition \ref{split} fails for 
$p=2$.  Here we produce examples showing that in fact
Theorem \ref{main1} fails for $p=2$ as well.

Let $E = 2^{1+2n}_{-}$ be the extraspecial group of type $-$ of order $2^{1 + 2n}$
for any $n \geq 5$. It is
well known that there is a non-split extension $G$ of $E$ such that 
$G/E \cong H:=\OO^-_{2n}(2)$.
Then $G$ has a a complex irreducible character $\varphi$ of degree
$2^n$ which is irreducible and faithful when restricted to $E$.
For $x \in G$, let $\bar{x} := xE$ be the corresponding element of $H$.

Let $x_i \in G$ of order $2^{n_i} + 1$ be acting on $E/Z(E)$ with
one nontrivial irreducible submodule of dimension $2n_i$
(and trivial on a complement). 
It follows by \cite[p. 372]{Gor}  that:
\begin{equation}\label{value}
  \varphi(x_i) = - 2^{n-n_i}.
\end{equation}

Now we show that Theorem \ref{main1} fails for $p = 2$.

\begin{prop}\label{p=2}  
In the above notation, choose $n_1=1$, $n_2 = n-1$, and $n_3=n$.

{\rm (i)} If $y_i \in G$ is such that $\langle y_i \rangle$ is conjugate to 
$\langle x_i \rangle$ for $i =1,2,3$, then $y_1 y_2 y_3 \ne 1$.

{\rm (ii)} There are conjugates $z_i \in G$ of $x_i$ such that 
$$\langle \bar{z}_1, \bar{z}_2, \bar{z}_3 \rangle = H = \OO^-_{2n}(2),~~~ 
  \bar{z}_1\bar{z}_2\bar{z}_3 =1,$$
but the generating triple $(\bar{z}_1,\bar{z}_2,\bar{z}_3)$ of 
$H$ does not lift to any triple $(t_1,t_2,t_3)$ in $G$ with 
$\bar{t}_i = \bar{z}_i$, $|t_i| = |z_i|$, and $t_1t_2t_3=1$. 
\end{prop}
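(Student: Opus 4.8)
The plan is to reduce both parts to a single vanishing statement about the class-multiplication constant of the three $G$-classes, driven by the factorization (\ref{value}). First I would record the relevant character values on all admissible elements. If $\langle y_i \rangle$ is conjugate to $\langle x_i \rangle$, then the eigenvalue multiset of $y_i$ on the module affording $\varphi$ is, by the structure of the Weil representation, a union of full Galois orbits: for $y_3$ it is the set of all $2^n$ nontrivial $(2^n+1)$-th roots of unity each once, for $y_1$ it is $\{\omega,\omega^2\}$ each with multiplicity $2^{n-1}$, and for $y_2$ the nontrivial $(2^{n-1}+1)$-th roots each with multiplicity $2$. Each such multiset is stable under $\zeta\mapsto\zeta^k$ for $k$ coprime to the order, so $\varphi(y_i)=\varphi(x_i)$ for every admissible $y_i$ by (\ref{value}), and hence $\varphi(y_1)\varphi(y_2)\varphi(y_3)=(-2^{n-1})(-2)(-1)=-2^n$. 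The point of the choice $n_1=1,\ n_2=n-1,\ n_3=n$ is exactly that $(n-n_1)+(n-n_2)+(n-n_3)=n$ while the number of factors is odd, so this product equals $-2^n$ rather than $+2^n$; the overall minus sign is the whole game.

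Next I would split $\Irr(G)$ using the central involution $z$ generating $Z(E)$, which lies in $Z(G)$ since $G$ acts trivially on the order-$2$ group $Z(E)$. Characters with $z\mapsto 1$ are inflated from $\bar G:=G/\langle z\rangle$, an extension of $H$ by the elementary abelian $2$-group $V:=E/\langle z\rangle\cong E/Z(E)$; characters with $z\mapsto -1$ restrict to $E$ as multiples of the unique faithful irreducible $\varphi|_E$, so by Gallagher's theorem they are exactly $\{\varphi\cdot\mathrm{Inf}(\sigma):\sigma\in\Irr(H)\}$, with $(\varphi\sigma)(g)=\varphi(g)\sigma(\bar g)$ and degree $2^n\sigma(1)$. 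Writing $\Sigma_G,\Sigma_{\bar G},\Sigma_H$ for the sums $\sum_\chi \chi(u_1)\chi(u_2)\chi(u_3)/\chi(1)$ over $\Irr(G),\Irr(\bar G),\Irr(H)$ at the relevant arguments, the faithful block contributes $\frac{\varphi(y_1)\varphi(y_2)\varphi(y_3)}{2^n}\,\Sigma_H=-\Sigma_H$, while the block trivial on $z$ contributes $\Sigma_{\bar G}$. Thus the partition of $\Irr(G)$ gives $\Sigma_G=\Sigma_{\bar G}-\Sigma_H$, and (\ref{count}) interprets $\Sigma_G$ as (a positive multiple of) the number of triples $(u_1,u_2,u_3)\in y_1^G\times y_2^G\times y_3^G$ with $u_1u_2u_3=1$.

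The heart of the matter is then to show $\Sigma_{\bar G}=\Sigma_H$, equivalently that an admissible $H$-triple with $\bar y_1\bar y_2\bar y_3=1$ lifts to a product-one triple in $\bar G$ with matching multiplicities. Since $V$ is abelian and the $\bar y_i$ are $2'$-elements, this is precisely the setting of Lemma \ref{abelian} (which makes no parity assumption), applied up a chief series of $V$: the achievable products of $V$-conjugates fill the entire coset $(\tilde y_1\tilde y_2\tilde y_3)V=V$, so $1$ is attained and the fibres have uniform size. Hence $\Sigma_G=0$, and by (\ref{count}) no triple in $y_1^G\times y_2^G\times y_3^G$ has product $1$. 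Because $\varphi(y_i)=\varphi(x_i)$ for \emph{every} admissible $y_i$, the identical computation applies verbatim to every choice of generators of the $\langle x_i\rangle$, yielding $y_1y_2y_3\neq1$ for all admissible $y_i$ and proving (i). I expect the clean bookkeeping behind $\Sigma_{\bar G}=\Sigma_H$ (the uniform-multiplicity lifting count through $V$) to be the one genuinely delicate point.

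For (ii), part (i) already supplies the non-liftability: if $\bar t_i=\bar z_i$ and $|t_i|=|z_i|$, then $t_i$ lies in the $E$-class of $z_i$, so $\langle t_i\rangle$ is conjugate to $\langle x_i\rangle$ and $t_1t_2t_3\neq1$ by (i). It remains to produce conjugates $z_i$ of the $x_i$ whose images satisfy $\bar z_1\bar z_2\bar z_3=1$ and generate $H=\OO^-_{2n}(2)$. Existence of a product-one triple in these classes follows from $\Sigma_H>0$, a positive class-multiplication constant in $H$ checked via (\ref{count}); to secure generation I would exploit that $\bar z_3$ has order $2^n+1$ and acts irreducibly on $V$, so any overgroup acts irreducibly, and then rule out the few irreducible maximal subgroups of $\OO^-_{2n}(2)$ that could contain elements of all three orders by a standard count showing generating triples dominate the total for $n\geq5$. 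This generation estimate, together with the lifting count of (i), is where I anticipate the routine-but-careful work; the conceptual crux remains the sign forced by the non-split structure, which produces the exact cancellation $\Sigma_G=\Sigma_{\bar G}-\Sigma_H=0$.
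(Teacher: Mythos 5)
Your setup for (i) — computing the structure constant via (\ref{count}), splitting $\Irr(G)$ according to the action on $Z(E)$, and using Gallagher plus (\ref{value}) to show the faithful block contributes $-\Sigma_H$ — is exactly the paper's argument (the paper uses three blocks rather than your two, but your block of characters trivial on $z$ is just the union of its first two). The gap is in the step you yourself call the heart of the matter: the claim that $\Sigma_{\bar G}=\Sigma_H$ follows from ``uniform'' lifting of product-one $H$-triples through $V=E/Z(E)$ via Lemma \ref{abelian}. This mechanism is false, and in fact contradicts the equality you want. First, Lemma \ref{abelian} requires the elements to generate the whole group; for a product-one triple $(h_1,h_2,h_3)$ generating a proper subgroup $K<H$, the achievable products of $V$-conjugates of the canonical odd-order lifts $\tilde h_i$ form the coset $(\tilde h_1\tilde h_2\tilde h_3)[K,V]$, which need not contain $1$, so such a triple may have \emph{no} product-one lift. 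Second, even when $1$ is attained, the number of product-one lifts is $\prod_i|[h_i,V]|\,/\,|[K,V]|$, which for $[K,V]=V$ equals $|V|$ (with your $n_i$ one has $\prod_i|[h_i,V]|=2^2\cdot 2^{2n-2}\cdot 2^{2n}=|V|^2$). So if every $H$-triple lifted uniformly as you assert, the class-size bookkeeping would yield $\Sigma_{\bar G}=|V|\,\Sigma_H$, not $\Sigma_{\bar G}=\Sigma_H$; and since $\Sigma_H>0$ (product-one triples exist in $H$, which is what part (ii) uses), these genuinely differ. The true statement, equivalent to $\Sigma_{\bar G}=\Sigma_H$, is that the number of product-one triples in the $\bar G$-classes \emph{equals} the number in the $H$-classes: on average one lift per $H$-triple, so most triples do not lift at all while those that do lift in $|V|$ ways. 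No uniformity argument can produce this cancellation.

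The paper's proof of this key step is where the hypothesis $n_3=n$ is used a second time (your write-up uses it only for the sign): every $\chi\in\Irr(\bar G)$ that is nontrivial on $V$ lies over a nontrivial linear character $\mu$ of $V$ and is therefore induced from the stabilizer of $\mu$; since $y_3$ acts fixed-point-freely on $V$ (this is exactly $n_3=n$), no conjugate of $y_3$ fixes any nontrivial $\mu\in\Irr(V)$, whence $\chi(y_3)=0$. Thus the entire block of characters trivial on $z$ but nontrivial on $V$ contributes $0$, and $\Sigma_{\bar G}=\Sigma_H$ follows at once. As for (ii), your non-liftability reduction to (i) via Schur--Zassenhaus is the same as the paper's; but your existence argument (``$\Sigma_H>0$ checked via (\ref{count})'') is circular as stated, and the claim that generating triples ``dominate the total'' is unsupported. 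The paper instead gets existence from Lemma \ref{regular} (Gow), applicable because $\bar x_2,\bar x_3$ are regular semisimple in $H$, and gets generation by citing \cite[7.1]{gursaxl}. Repairing (i) with the induced-character vanishing argument and replacing your sketch in (ii) by these two references recovers the proof.
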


\begin{proof}  
(i) Note that any generator of $\langle x_i \rangle$ also fulfills the conditions 
imposed on $x_i$. Hence we may assume that $y_i \in x_i^G$.
We use (\ref{count}) to count the number $N$ of triples  
in $x_1^G \times x_2^G \times x_3^G$ with product $1$,
and break the sum in (\ref{count}) into three pieces.  The first piece is the
sum over the irreducible characters whose kernel contains
$E$. The second piece is the sum over the characters whose
kernel is $Z(E)$ and the third piece is the sum over all faithful characters.
Let $N_1$ denote the first sum and let $N_3$ denote the third sum.

First we note that any character $\beta$ in the second sum is afforded
by an induced module from the stabilizer of a linear character of $E/Z(E)$. Since $x_3$
has no fixed points on $E/Z(E)$, any such character $\beta$ vanishes
on $x_3$. Thus the second sum is $0$. 

Next, by Gallagher's theorem \cite[6.17]{I2}, the characters in the third sum are 
precisely those of the form $\varphi\lambda$ where $\lambda$ is an irreducible
character of $G/E$ 
(and they are all distinct for distinct $\lambda$).   
Applying (\ref{value}), we now see that
$$N_3 = \frac{\varphi(x_1)\varphi(x_2)\varphi(x_3)}{\varphi(1)} N_1 = - N_1,$$
whence $N=0$.

(ii) Note that $\bar{x}_2$ and $\bar{x}_3$ are regular semisimple elements of
$H$. Hence, by Lemma \ref{regular}, there exist $z_i \in G$ 
conjugate to $x_i$ for $i = 1,2,3$ such that $\bar{z}_1\bar{z}_2\bar{z}_3 =1$. 
By \cite[7.1]{gursaxl}, the elements $\bar{z}_i$ generate $H$.
Now consider any $t_i \in z_iE = \bar{z}_i$ with $|t_i| = |z_i|$. By the 
Schur-Zassenhaus theorem, $\langle t_i \rangle$ is conjugate to 
$\langle z_i \rangle$ and hence to $\langle x_i \rangle$. It follows
by (i) that $t_1t_2t_3 \neq 1$. 
\end{proof}

Note that one can construct similar examples for odd $p$ with $E$
extraspecial of exponent $p$ of order $p^{1+2a}$ and $G/E \cong \Sp_{2a}(p)$.  
However, in this case the extension is split and so $G$ is not a Frattini cover.  

\smallskip
For the next example, 
let $E$ and $G$ as in the beginning of the section with  $n= 2m \geq 6$.
Also we choose $m$ such that $2^{2m}-1$ has at least two different 
primitive prime divisors $p_1$ and $p_2$. This is possible for instance
for $m = 14$, with $p_1 = 29$ and $p_2 = 113$. Also fix a primitive 
prime divisor $p_3$ of $2^{4m}-1$, and choose $n_1 = n_2 = m$
and $n_3 = n$. Since $E/Z(E)$ is a quadratic space of type $-$, one can
check that, for $i = 1,2,3$, any nontrivial $p_i$-element $x_i \in G$ acts irreducibly 
on a subspace of dimension $2n_i$ of $E/Z(E)$ and trivially on a complement.
Arguing exactly as above, we see that there are no conjugates $y_i$
of $x_i$ in $G$ with $y_1y_2y_3=1$.  
Thus, there  is no $(p_1, p_2, p_3)$-triple in $G$, but $G$ has a
composition factor whose order is divisible by $p_1p_2p_3$.

\smallskip
We know of no such counterexample with one of the primes being even.
We conjecture:

\begin{conj}\label{2pq}  
Let $q < p$ be odd primes and let $G$ be a finite group.  
The following statements are equivalent:

{\rm (i)} $G$ contains a composition factor whose order is divisible by $pq$; and

{\rm (ii)} $G$ contains a $(2,p,q)$-triple.
\end{conj}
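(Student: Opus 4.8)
The plan is to prove the two implications of Conjecture \ref{2pq} separately, isolating the part that genuinely requires the classification. The implication (ii) $\Rightarrow$ (i) should go through by a short induction on $|G|$ with no appeal to the classification. Take a minimal counterexample $G$ possessing a $(2,p,q)$-triple $(x,y,z)$ but no composition factor of order divisible by $pq$. If $G$ is non-abelian simple it is itself such a composition factor, since it contains the nontrivial $p$-element $y$ and $q$-element $z$; a group of prime order admits no such triple. Otherwise pick $1 < N \lhd G$ and pass to $G/N$. The key observation is a dichotomy: if any one of $\bar x, \bar y, \bar z$ is trivial in $G/N$, then all three are. Indeed, if say $\bar y = 1$ then $\bar x \bar z = 1$ forces the $2$-element $\bar x$ to equal the $q$-element $\bar z^{-1}$, whence both are trivial; the remaining cases are identical, using the remark that the roles of the three primes may be permuted by conjugation. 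Thus either all of $x,y,z$ lie in $N$, giving a $(2,p,q)$-triple in the proper subgroup $N$, or none does, giving one in $G/N$; either way minimality is contradicted, as the composition factors of $N$ and of $G/N$ are composition factors of $G$.

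For (i) $\Rightarrow$ (ii) I would mirror the proof of Theorem \ref{main4}. Let $G$ be a minimal counterexample: $G$ has a composition factor $S$ (necessarily non-abelian simple) with $pq \mid |S|$, but no $(2,p,q)$-triple. Minimality forces every proper subgroup of $G$ to have no composition factor divisible by $pq$, and hence, arguing as in Lemma \ref{minimal}, that $G$ is perfect with $G/\Phi(G) \cong S$ simple. The $2$-element in the triple lets us dispose of the prime $2$ exactly as in step 1) of Theorem \ref{main3}: if $O_2(G) \neq 1$ a triple for $G/O_2(G)$ lifts by absorbing the $O_2$-part into the $2$-element, so $O_2(G) = 1$. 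Now any minimal normal subgroup $N \leq \Phi(G)$ is an $r$-group with $r$ odd, so Proposition \ref{split}(ii) gives $N$ abelian and the odd-prime lifting machinery applies. By minimality $G/N$ carries a $(2,p,q)$-triple, which I would lift to elements $x_1,x_2,x_3 \in G$ of the correct prime-power orders with $x_1x_2x_3 = n \in N$. The subgroup $L := \langle x_1,x_2,x_3\rangle$ maps onto $\langle \bar x_1,\bar x_2,\bar x_3\rangle \leq G/N$, which has a $(2,p,q)$-triple and therefore, by the already-established implication (ii) $\Rightarrow$ (i), a composition factor divisible by $pq$; since proper subgroups of $G$ have none, $L = G$. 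As $G$ is perfect, $G = O^r(G)$, so Lemma \ref{abelian} produces conjugates $y_i$ of $x_i$ with $y_1y_2y_3 = (x_1x_2x_3)n^{-1} = 1$, a $(2,p,q)$-triple in $G$ and the desired contradiction. This reduces the whole implication to the case $\Phi(G) \leq Z(G)$, i.e. to $G$ quasisimple with $S = G/Z(G)$.

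It remains to settle the base case: every quasisimple $G$ with $pq \mid |G/Z(G)|$, and (in the minimal-counterexample setting) with every proper subgroup having no composition factor divisible by $pq$, admits a $(2,p,q)$-triple. I would first classify, via the classification of finite simple groups, the resulting ``minimal $\{p,q\}$-composite'' simple groups $S$ — a two-prime analogue of Lemmas \ref{3sol} and \ref{psol1}, obtained by running the same section-by-section analysis of alternating, sporadic and Lie-type groups. For each surviving $S$ and its Schur cover I would exhibit the triple through the structure-constant formula (\ref{count}): when $S$ is of Lie type in odd characteristic, choosing the $p$- and $q$-elements to be regular semisimple and applying Gow's Lemma \ref{regular} yields a noncentral semisimple $2$-element in the product class; in characteristic $2$ the $2$-element is unipotent, so I would instead use weak orthogonality of the relevant maximal tori together with the reduction to unipotent characters as in \cite{LST} and \cite{MSW}. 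The alternating groups would be handled by extending Lemma \ref{alt}, now requiring a $p$-element and a $q$-element of prescribed types rather than a $p$-cycle together with an arbitrary odd prime, and the sporadic groups by direct character-table computation.

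The main obstacle is precisely this base case, and within it the situations where neither the $p$-element nor the $q$-element can be taken regular semisimple — for instance when both $p$ and $q$ divide the order of a single small maximal torus, so that the centralizers are large and the non-principal, non-Steinberg contributions to (\ref{count}) are no longer negligible. Bounding that character sum away from zero, so as to guarantee that the product of the $p$- and $q$-classes meets the set of noncentral $2$-elements, is the crux. Fixing \emph{both} odd primes simultaneously, rather than letting one of them roam as in Theorem \ref{main4}, is exactly what removes the slack that Lemma \ref{regular} and the weak-orthogonality arguments exploited, and is the reason the statement remains only conjectural.
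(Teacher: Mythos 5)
Your proposal is correct as far as it goes and follows essentially the same route as the paper: the paper's own treatment of Conjecture \ref{2pq} consists precisely of the observation that the arguments of Theorem \ref{main4} give (ii)$\Rightarrow$(i) and reduce any minimal counterexample to (i)$\Rightarrow$(ii) to a quasisimple group with no proper section of order divisible by $pq$ (and with $O_2(G)=1$), which is exactly the reduction you carry out via the Frattini argument, Proposition \ref{split} and Lemma \ref{abelian}. The quasisimple base case is left open in the paper as well---that is why the statement is only a conjecture---so your identification of that case as the genuine obstruction matches the paper's own assessment.
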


The same arguments as in the proof of Theorem 
\ref{main4} show that (ii) implies (i), and that
a minimal counterexample to Conjecture \ref{2pq} would be a quasisimple group $G$ 
such that $G$ has no simple sections of order divisible by $pq$.  Moreover,
we can assume that $O_2(G)=O_p(G)=O_q(G)=1$. Note
that the result holds for $G=\AAA_n$ by Lemma \ref{alt} and so for the covering
groups as well (checking $6\AAA_7$ directly).

\section{A Short Proof of the Feit-Tits Theorem}
Let $\FF$ be an algebraically closed field of characteristic $p \ge 0$.
If $S$ is a finite non-abelian simple group, let $m_p(S)$ be the smallest positive integer
$n$ such that $S$ is a section of some subgroup of $\GL_n(\FF)$.
Also, let $d_p(S)$ be the smallest degree of a nontrivial representation
of the covering group of $S$ over $\FF$ (i.e. the smallest nontrivial degree of a
projective representation of $S$ over $\FF$). The following theorem was 
proved by Feit and Tits in \cite{FT} (and it was refined further by 
Kleidman and Liebeck \cite{KL} using the classification of finite simple 
groups). Here we give 
a short proof of the Feit-Tits theorem.

\begin{theorem}   
Suppose  $m_p(S) \ne d_p(S)$ for a finite non-abelian simple group $S$. 
Then $p \ne 2$ and $m_p(S) = 2^{n(S)}$, where $n(S)$ is the smallest positive 
integer $n$ such that $S$ embeds in $\Sp_{2n}(2)$. 
\end{theorem}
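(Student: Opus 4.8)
The plan is to induct on dimension by taking a minimal realization and peeling off structure until only an extraspecial $2$-group remains. First note the trivial inequality $m_p(S) \le d_p(S)$: a faithful projective representation of $S$ of degree $d_p(S)$ realizes a cover $\hat S$ as a subgroup of $\GL_{d_p(S)}(\FF)$ with $S = \hat S/Z(\hat S)$ a section. So I assume $m_p(S) = n < d_p(S)$ and aim to produce the stated structure. Choose $H \le \GL(V)$ with $\dim_\FF V = n$, $S$ a composition factor of $H$, and $|H|$ minimal subject to this. If $H$ were reducible then $S$, being simple, would be a composition factor of the action on some composition factor $\bar V$ of the $\FF H$-module $V$ (the unipotent kernel contributes only abelian factors), forcing $\dim \bar V = n$ and hence $V$ irreducible. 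If $H$ is imprimitive, say $V = V_1 \oplus \cdots \oplus V_k$ with the summands permuted, then either $S$ is a composition factor of some $\GL(V_i)$ with $\dim V_i < n$, or $S$ is a composition factor of the image of $H$ in the permutation group $\fS_k$; in the latter case $S \cong \fA_j$ for some $j \le k$ and $S$ is realized inside $\GL_{j-1}(\FF)$ via the deleted permutation module, with $j - 1 < k \le n$. Either way minimality of $n$ is contradicted, so $H$ is primitive.

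Next I would eliminate the layer $E(H)$. In characteristic $p$ an irreducible group has $O_p(H) = 1$, since a normal $p$-subgroup has a nonzero fixed space. Suppose $E(H) \ne 1$ and let $L$ be a component, a quasisimple group with simple quotient $T$. If $T \cong S$, then $L$ is a cover of $S$ and acts nontrivially on some irreducible constituent $W$ of $V|_L$ of dimension $\ge d_p(S)$, giving $d_p(S) \le \dim W \le n = m_p(S)$, a contradiction. Hence $T \not\cong S$; but then, by the standard tensor decomposition $V \cong W \otimes W'$ attached to a primitive group with a normal quasisimple subgroup (or a tensor-induced decomposition when the components are permuted), $S$ is a composition factor of the complementary factor $C_H(L)$ acting on $W'$, respectively of the permutation action on the components, and in both cases $S$ is realized in dimension strictly less than $n$ (using $\dim W \ge 2$, respectively $d_p(\fA_j) \le j - 1$). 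This again contradicts minimality, so $E(H) = 1$ and $F^*(H) = F(H)$ is nilpotent. As $H$ is primitive, $F(H)$ is of symplectic type: $F(H) = Z(H) \circ R$ where $Z(H)$ is scalar and $R/Z(R)$ carries a nondegenerate symplectic form over $\FF_r$ for each prime $r$ dividing $|R|$. Since $S$ is a composition factor of neither $F(H)$ (whose factors are abelian) nor $E(H) = 1$, it is involved in $H/F(H)$ acting faithfully on $F(H)/Z(H)$; as $H$ centralizes the scalar group $Z(R)$, this embeds $S$ into $\prod_r \Sp_{2m_r}(r)$, and simplicity forces $S$ into a single factor $\Sp_{2m}(r)$.

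The heart of the argument is then a dichotomy on the prime $r$, and this is where Proposition \ref{split} enters. Let $R_r$ be the (minimal noncentral, non-abelian) extraspecial $r$-subgroup carrying the symplectic space on which $S$ acts; it contributes a tensor factor of dimension $r^m$ to $V$. If $r$ is odd, then by Proposition \ref{split}(ii) the quotient $H/Z(R_r)$ splits over $R_r/Z(R_r)$, so there is a complement $U \le H$, with $U \cap R_r = Z(R_r)$ scalar, that still has $S$ as a composition factor. The $r^m$-dimensional Weil module of $R_r$ decomposes under any such complement into two pieces of dimensions $(r^m \mp 1)/2$, and $U$ preserves this decomposition while $R_r$ does not. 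Consequently $S$ is a composition factor of the action of $U$ on a subspace of dimension $\tfrac{r^m-1}{2}\dim W' < n$ or $\tfrac{r^m+1}{2}\dim W' < n$, contradicting minimality. Therefore $r = 2$. Minimality then forces $F(H) = Z(H) \circ R_2$ with $R_2 = 2^{1+2m}$ and no further tensor factors, so $n = 2^m$; the action of $H$ on $R_2/Z(R_2) = \FF_2^{2m}$ preserves the symplectic form, so $S \hookrightarrow \Sp_{2m}(2)$, and minimality of $n$ makes $m = n(S)$ the least such, giving $m_p(S) = 2^{n(S)}$. Finally $p \ne 2$, because the nontrivial $2$-group $R_2$ acts faithfully on $V$, which is impossible in characteristic $2$ (a $2$-group fixes a nonzero vector).

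The main obstacle I expect is the structural bookkeeping in the middle paragraph: verifying that, after removing the layer, the composition factor $S$ is forced into the symplectic part $\prod_r \Sp_{2m_r}(r)$ rather than hiding in some other quotient, and doing so without invoking the classification. The dimension-counting contradictions via $d_p(\fA_j) \le j-1$ and via tensor-complementary factors are precisely what let me avoid the Schreier property for $\mathrm{Out}(T)$. The cleanest formulation will identify $R_r$ as a minimal noncentral normal subgroup so that Proposition \ref{split} applies verbatim, and will use the splitting of the Weil module of an odd extraspecial group as the single external input distinguishing odd $r$ from $r = 2$.
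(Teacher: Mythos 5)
Your overall plan (minimal linear realization, reduce to a symplectic-type normal $r$-subgroup, use Proposition \ref{split} to force $r=2$, then Clifford theory to pin the dimension at a power of $2$) is the right one, but there are two genuine gaps, and they trace back to a single missing idea. First, your elimination of the components with $T \not\cong S$ does not work as stated. Writing $K$ for the product of the $H$-conjugates of $L$, the composition factor $S$ need not lie in $C_H(K)$ (which your tensor factor $W'$ catches) nor in the permutation action on the components (which the deleted permutation module catches -- note, incidentally, that a composition factor of a subgroup of $\fS_k$ is an arbitrary section of $\fS_k$, not necessarily some $\fA_j$, though the bound $m_p(S) \le k-1$ survives this correction). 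It can also sit in $H/KC_H(K)$, which embeds in the outer automorphism group of $K$; ruling out $S$ as a composition factor of $\mathrm{Out}(L)$ with $L/Z(L) = T \not\cong S$ is exactly Schreier's conjecture, i.e.\ the classification-dependent input you set out to avoid, and no dimension count reaches this quotient. Second, at the end you obtain $S$ only as a composition factor of the image of $H$ in $\Sp_{2m}(2)$, i.e.\ as a \emph{section} of $\Sp_{2m}(2)$; but $n(S)$ is defined via embeddings, so to conclude $m = n(S)$ you need an actual embedding $S \hookrightarrow \Sp_{2m}(2)$, which a section does not provide.

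Both gaps are closed by the structural fact the paper extracts from minimality and which you never derive: if $H \le \GL_n(\FF)$ is minimal having $S$ as a composition factor (equivalently, minimal surjecting onto $S$), then every proper normal subgroup $N \lhd H$ lies in $\Phi(H)$ -- otherwise $H = MN$ for some maximal $M$, and $S$, not being a composition factor of $N$ by minimality, is one of $H/N \cong M/(M\cap N)$, hence of the proper subgroup $M$. Consequently $H/\Phi(H) \cong S$, $H$ is perfect, and every proper normal subgroup is nilpotent. This kills $E(H)$ outright with no tensor decompositions and no Schreier: a proper $E(H)$ would be nilpotent, hence trivial, and if $H$ itself is quasisimple then $\Phi(H) = Z(H)$ and $V$ affords a nontrivial projective representation of $S$ of degree $n < d_p(S)$, contradicting the hypothesis. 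It also yields the embedding: taking the noncentral normal subgroup $N$ minimal \emph{inside} $\Phi(H)$, one checks that $\Phi(H)$ acts trivially on $N/Z(N)$ (its odd part is central, and its $2$-part, being a normal $2$-group, has nonzero fixed points on the irreducible $\FF_2 H$-module $N/Z(N)$), so the symplectic action factors through $H/\Phi(H) \cong S$; since $S$ is simple the action is faithful or trivial, and triviality is excluded by the Three Subgroups Lemma using $[H,H]=H$. Finally, your Weil-module detour for odd $r$ is unnecessary: with $N \le \Phi(H)$, Proposition \ref{split}(ii) directly forbids a non-abelian minimal noncentral normal $r$-subgroup for odd $r$; and even in your setup, the complement $U$ that Proposition \ref{split}(ii) produces is a proper subgroup covering $H/R_r$, hence has $S$ as a composition factor, contradicting minimality of $|H|$ with no dimension count -- and without the conjugacy-of-complements and characteristic-$p$ semisimplicity assertions your splitting argument silently requires.
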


\begin{proof}
1) Certainly, $m_p(S) \leq d_p(S)$. Also, suppose that $p \neq 2$ and $S$ embeds in some 
$\Sp_{2n}(2)$. Since $\GL_{2^n}(\FF)$ contains a subgroup of the form 
$(C_4 \circ 2^{1+2n}_{+}) \cdot \Sp_{2n}(2)$, we see that $m_p(S) \leq 2^n$. Thus 
$m_p(S) \leq 2^{n(S)}$.  

Set $m:=m_p(S)$ and let $H \le GL_m(\FF)= \GL(V)$ where $H$ is a finite
group with $S$ a section of $H$. By passing to a subgroup, we may
assume that $H$ surjects onto $S$ and no proper subgroup of $H$  
surjects onto $S$. Now, for any proper normal subgroup $N$ of $H$, if $N$ is
not contained in a maximal subgroup $M$ of $G$, then $MN = H$ and so 
$M/(M \cap N) \cong H/N$. By the minimality of $H$, $S$ is not a composition factor of 
$N$, hence it is a composition factor of $H/N$. It follows that a subgroup of $M$ 
projects onto $S$, a contradiction. Thus every proper normal subgroup of 
$H$ is contained in $\Phi(H)$, and so $H/\Phi(H) \cong S$.
It also follows that $H$ is perfect.
Since $m_p(S) < d_p(S)$ by the hypothesis, we see that $\Phi(H) > Z(H)$. 

\medskip
2) Clearly, we may assume that $H$ is an irreducible subgroup of $\GL(V)$.
Now we choose $N \leq \Phi(H)$ to be a noncentral normal subgroup of $H$ of smallest
possible order.  So $N$ is an $r$-group for some prime $r$. If $r=p$, then
$N$ acts trivially on $V$ by irreducibility, a contradiction (since $H \le GL(V)$).
Thus $r \neq p$.

If $N$ is abelian, then, since $N$ is not central, $m$ is at least the size of the 
smallest orbit of $H$ on the set of nontrivial irreducible characters of $N$ 
by Clifford's theorem.  This implies that some
nontrivial homomorphic image $H/K$ of $H$ embeds in ${\sf S}_m$. 
By 1), $S$ is a quotient of
$H/K$, and so $m = m_p(S) \le m_p({\sf S}_m) \le m-1$, a contradiction.
On the other hand, if $r \neq 2$ then $N$ must be abelian by Proposition 
\ref{split}(ii). Hence 
$p \neq r = 2$. In particular, we see that $O_{2'}(\Phi(H)) \le Z(H)$ and the
only noncentral normal subgroups of $H$ contained in $\Phi(H)$ are non-abelian $2$-groups. 

Let $M$ be any normal subgroup of $H$ properly contained in $N$. Then $M$ is central by 
the minimality of $N$, and so $M \leq Z(H)$. In particular, 
$Z(N)= N \cap Z(H)$ and so it is cyclic by Schur's lemma. 
Furthermore, $Z(N/Z(N)) = N/Z(N)$, and $N/Z(N)$ is an elementary abelian 
$2$-group. Also, $H$ acts irreducibly on $N/Z(N)$, and the commutator
map defines an $H$-invariant nondegenerate alternating form on $N/Z(N)$ (with values in 
$\Omega_1(Z(N)) \cong C_2$). Since $O_{2'}(\Phi(H)) \leq Z(H)$ acts trivially on
$N/Z(N)$, the irreducibility of $H$ on $N/Z(N)$ implies that $\Phi(H)$ acts 
trivially on $N/Z(N)$. Recall that $S = H/\Phi(H)$ is simple. Now if $S$ acts trivially
on $N/Z(N)$, then $[H,N] \leq Z(N)$ centralizes $H$ and so $[H,N] = [[H,H],N] = 1$
by the Three Subgroups Lemma, a contradiction as $N$ is not central. Thus $S$ 
acts faithfully on $N/Z(N)$ and so embeds in $\Sp_{2n}(2)$, where $|N/Z(N)|= 2^{2n}$.
In particular, $n \geq n(S)$.

Let $m\varphi$ denote the character of the $Z(N)$-module $V$ (recall $Z(N)$ acts 
scalarly and faithfully on $V$). Then the restriction of $\varphi$ to 
$1 \neq [N,N] \leq Z(N)$ is nontrivial. By Proposition \ref{split}(i),
there is exactly one irreducible character $\theta$ of $N$ lying above $\varphi$,
and $\theta(1) = 2^n$; also $\theta$ is $H$-invariant. Thus the $N$-module $V$ 
is a direct sum of $m/2^n$ copies of a single irreducible $\FF N$-module which 
affords the character $\theta$. Let $\Psi$ denote the representation of $H$ on $V$. 
By Clifford theory, $\Psi \cong \Theta \otimes \Lambda$, where $\Theta$ 
is an irreducible projective representation of $H$ of dimension $2^n$ and $\Lambda$ 
is an irreducible projective representation of $H/N$. Now if $\Lambda$ is nontrivial, then 
$m = \dim(V)  > \deg(\Lambda) \ge m_p(S)$, a contradiction. Thus $m=2^n$.
Since $n \geq n(S)$ and $m \leq 2^{n(S)}$, we conclude that $m = 2^{n(S)}$. 
\end{proof}

\end{document}